\newcommand{\Lr}{\mathscr{L}}
\newcommand{\dvol}{\frac{\omega^m}{m!}}
\newcommand{\dsubvol}{\frac{\omega^{m-1}}{(m-1)!}}
\newcommand{\dsubsubvol}{\frac{\omega^{m-2}}{(m-2)!}}
\newcommand{\tr}{\mathrm{tr}}
\newcommand{\omegalin}{\omega_{\textrm{lin}}}
\newcommand{\Walg}{\mathbb{W}}
\newcommand{\Sp}{\mathrm{Sp}}
\newcommand{\h}{\mathfrak{h}}
\newcommand{\R}{\mathbb{R}}
\newcommand{\N}{\mathbb{N}}
\newcommand{\Z}{\mathbb{Z}}
\def\dim{\mathop{\mathrm{dim}}\nolimits}
\def\ham{\mathop{\mathrm{Ham}}\nolimits}
\newcommand{\E}{\mathcal{E}}
\newcommand{\LC}{\mathrm{lv}}
\newcommand{\MOm}{\mathcal{M}_{\Theta}}
\newcommand{\grad}{\textrm{grad}}
\newcommand{\D}{\mathcal{D}}
\newcommand{\extwedge}{\stackrel{\circ}{\wedge}}
\newcommand{\W}{\mathcal{W}}
\newtheorem{theorem}{Theorem}[section]
\newtheorem{prop}[theorem]{Proposition}
\newtheorem{defi}[theorem]{Definition}
\theoremstyle{definition} 
\newtheorem{ex}[theorem]{Example}
\theoremstyle{remark}
\newtheorem{rem}[theorem]{Remark}
\newcommand{\mapright}[1]{\smash{\mathop{   \hbox to 0.7cm{\rightarrowfill}}
  \limits^{#1}}}
\newcommand{\mapdown}[1]{\Big\downarrow\rlap{$\vcenter{\hbox{$\scriptstyle#1\,$}}$ }}
\newcommand{\barpartial}{{\overline \partial}}
\newcommand{\bfC}{{\mathbb C}}
\newcommand{\bfP}{{\mathbb P}}
\newcommand{\bfR}{{\mathbb R}}
\newcommand{\bfZ}{{\mathbb Z}}
\newcommand{\bari}{{\overline i}}
\newcommand{\barj}{{\overline j}}
\newcommand{\bark}{{\overline k}}
\newcommand{\Ric}{\mathrm {Ric}}
\begin{document}

\renewcommand{\refname}{Bibliography}

\title{Deformation quantization and K\"ahler geometry with moment map}

\author{
Akito Futaki and Laurent La Fuente-Gravy\footnote{Partially supported by the IRP GEOMQ15 of the University of Luxembourg and the OPEN project QUANTMOD of the FNR}\\
{\small Yau Mathematical Sciences Center, Tsinghua University, Haidian district, Beijing 100084, China}\\
{\small futaki@tsinghua.edu.cn}
\and
 {\small Mathematics Research Unit, Universit\'e du Luxembourg,}\\
{\small MNO, 6, Avenue de la Fonte, L-4364 Esch-sur-Alzette, Luxembourg,\ \ \ }\\
{\small laulafuent@gmail.com}
}

\maketitle

\begin{abstract}
In the first part of this paper we outline the constructions and properties of Fedosov star product and
Berezin-Toeplitz star product. In the second part  we outline 
the basic ideas and recent developments on Yau-Tian-Donaldson conjecture
on the existence of K\"ahler metrics of constant scalar curvature. In the third part of the paper we outline recent results of both authors, and in particular show that the constant scalar curvature K\"ahler metric problem and the study of deformation quantization meet at the notion of trace (density) for star product. We formulate a cohomology formula for  the invariant of K-stability condition on K\"ahler metrics with constant Cahen-Gutt momentum.
\end{abstract}

\noindent {\footnotesize {\bf Keywords:} Symplectic connections, Moment map, Deformation quantization, Closed star products, K\"ahler manifolds.\\
{\bf Mathematics Subject Classification (2010):}  53D55, 53D20, 32Q15, 53C21}

\newpage

\tableofcontents


\section{Introduction}


Let $M$ be a closed K\"ahler manifold. The existence of constant scalar 
curvature K\"ahler metric has been extensively studied over many years, and is still
one of the main problems in complex geometry.

The scalar curvature of a compact K\"ahler manifold gives a moment map in the Donaldson-Fujiki picture. Since the zeros of the moment map are considered to correspond to the stable orbit in Geometric Invariant Theory by Kempf-Ness theorem \cite{KempfNess} it is conjectured that the existence of constant scalar curvature metrics should be equivalent to certain notion of GIT stability (the Yau-Tian-Donaldson conjecture).

In the balanced metric approach, Luo \cite{Luo} expressed a stability condition for a polarized K\"ahler manifold $(M,L)$ as the constancy of the Bergman function, see also \cite{phongsturm03}. Considering tensor powers $L^k$ of $L$ and corresponding Bergman functions $\rho_k$, Donaldson \cite{donaldson01} used the asymptotic expansion of the Bergman function to prove the following. Assume that $\mathrm{Aut}(M,L)$ is discrete, if there exists a constant scalar curvature K\"ahler form in $c_1(L)$ then
\begin{enumerate}
\item $(M,L)$ is asymptotically stable and thus for each $k$ a balanced metric of $L^k$ exists,
\item  as $k \to \infty$ the balanced metrics converge to the constant scalar curvature K\"ahler metric. 
\end{enumerate}

In the framework of prequantizable K\"ahler manifolds, it was shown by Bordemann-Meinrenken-Schlichenmaier \cite{BMS} that quantization by Toeplitz operators has the correct semi-classical behaviour. The asymptotic expansion of the composition of Toeplitz operators yields an associative formal deformation of the Poisson algebra of smooth functions of the symplectic manifold: the Berezin-Toeplitz star product, see \cite{Schlich} for a proof (this results was already obtained by Bordemann-Meinrenken-Schlichenmaier shortly after \cite{BMS}, confer the references in \cite{Schlich}). Before that, the Bergman function $\rho_k$ was already studied by Rawnsley \cite{Rawnsley}, when it is constant for all $k$, Cahen-Gutt-Rawnsley \cite{CGR} already obtained a deformation quantization of the K\"ahler manifold.

In the formal deformation quantization of a symplectic manifold, or more generally of a Poisson manifold, defined in \cite{BFFLS} the quantization procedure is an associative deformation of the Poisson algebra of observables. That is a star product $*$ on the space $C^{\infty}(M)[[\nu]]$ given by a series of bidifferential operators deforming the pointwise product and satisfying $\frac{1}{\nu}(F*G-G*F)-\{F,G\}=O(\nu)$. Star products do exist on any symplectic manifold by Dewilde--Lecomte \cite{DWL}, Fedosov \cite{fed} and Omori--Maeda--Yoshioka \cite{OMY} and more generally on any Poisson manifolds by Kontsevitch \cite{Kon}. 

In this survey, we show that the constant scalar curvature K\"ahler metric problem and the study of deformation quantization meet at the notion of trace (density) for star product. Star products on symplectic manifolds admit an essentially unique trace \cite{fed}, \cite{NT}, \cite{gr}, that is a character on the Lie algebra $(C^{\infty}_c(M)[[\nu]],[\cdot,\cdot]_*)$ for $[\cdot,\cdot]_*$ denoting the $*$-commutator. Moreover, the trace of a star product can always be written as an $L^2$-pairing with an essentially unique formal function $\rho\in C^{\infty}(M)[\nu^{-1},\nu]]$ (where we allow a finite number of negative powers of $\nu$), called a trace density. 

Connes-Flato-Sternheimer \cite{CFS} define strongly closed star products for which the integration functional is a trace. Equivalently, it means that the trace density is a formal constant, i.e. $\rho \in \R[\nu^{-1},\nu]]$. If such a closed star product exists, it is possible to define its character \cite{CFS}, a cyclic cocycle in cyclic cohomology. The character of a closed star product on a symplectic manifold has been identified first for cotangent bundles in \cite{CFS} and after that for closed symplectic manifold by Halbout using the index theorem for Fedosov star products \cite{fed3}, \cite{NT}.

Back to the settings of a closed prequantizable K\"ahler manifold $(M,L)$ with $\omega\in c_1(L)$ and the tensor powers $L^k$ of $L$ we consider the underlying Berezin-Toeplitz star product \cite{Schlich}. The Bergman kernel $\rho_k$, more precisely, a formal version of its asymptotic expansion $\rho\in C^{\infty}(M)[\nu^{-1},\nu]]$ (setting $\nu=\frac{1}{k}$ in the expansion) gives a trace density for the Berezin-Toeplitz star product. Following the Tian-Yau-Zelditch (TYZ) expansion \cite{Tian}, \cite{Zel}, the first possibly non-constant term in $\rho$ is a multiple of the scalar curvature of the K\"ahler manifold. So that for a Berezin-Toeplitz star product the closedness condition means the coefficients of TYZ expansion are constants, hence scalar curvature must be constant.

The above suggests that trace densities and closedness of ``naturally'' defined star products could be studied from a K\"ahler geometry point of view. Our ``naturally'' defined star products will be star products obtained from Fedosov's method \cite{fed}. Fedosov star products exist on any symplectic manifold and only depends on the choice of a symplectic connection on the symplectic manifold (we set extra possible choices equal to $0$). His method provides an algorithm to obtain the bidifferential operators defining the star product. We will present his method in this survey and perform it up to order $3$ in $\nu$.

Consider now the trace density of such a Fedosov star product. The second author \cite{LLF} identifies the first possibly non-constant term to be the image of a moment map on the infinite dimensional space of symplectic connections previously discovered by Cahen-Gutt \cite{cagutt}. We call this image, which will be described 
in \eqref{CG momentum} as $\mu(\nabla)$, the Cahen-Gutt momentum. 
In view of the TYZ expansion, it suggests an analogy between the Cahen-Gutt momentum and the scalar curvature. Pushing the analogy further, the second author \cite{La Fuente-Gravy 2016_2} defined a Futaki invariant obstructing the constancy of the Cahen-Gutt momentum on K\"ahler manifolds, and hence obstructing the existence of closed Fedosov star product on K\"ahler manifolds. This invariant is inspired from the work of the first author \cite{futaki83.1} in which an obstruction to the existence of constant scalar curvature K\"ahler metrics is discovered. Very recently, together with Ono, the first author \cite{FO_CahenGutt} proved an analogue of the Calabi-Lichnerowicz-Matsushima theorem for a Cahen-Gutt version of extremal K\"ahler metrics. As a byproduct, the non-reductiveness of the reduced Lie algebra of holomorphic vector fields (assuming a non-negativity condition on the Ricci tensor) is an obstruction to the existence of closed Fedosov star product on K\"ahler manifolds.

In view of the analogies with the constant scalar curvature K\"ahler metric problem discovered in \cite{LLF,La Fuente-Gravy 2016_2} and \cite{FO_CahenGutt}, we may expect that Geometric Invariant Theory would play some role also in the study of deformation quantization. In the last part of the paper we formulate a version of K-stability for 
the existence of K\"ahler metrics of constant Cahen-Gutt momentum.

\subsection*{Acknowledgement}

Part of this work was written during the second author was visiting Tsinghua University, second author would like to thank Tsinghua University and the first author for this opportunity and for hospitality.

\section{Deformation Quantization}
\subsection{Definition and general properties} \label{subsect:defandgeneral}

On a symplectic manifold $(M,\omega)$, a star product as defined in \cite{BFFLS} is a formal associative deformation of the Poisson algebra of functions $(C^{\infty}(M),\cdot, \{\cdot,\cdot\})$. Recall that the symplectic form $\omega$ is a closed nondegenerate $2$-form. It induces the Poisson bracket $\{F,G\}:=-\omega(X_F,X_G)$ for $F,G\in C^{\infty}(M)$ and vector field $X_F$ uniquely determined by $\imath(X_F)\omega=dF$. 

A \emph{star product} is a product on the space $C^{\infty}(M)[[\nu]]$ of formal power series in $\nu$ with coefficient in $C^{\infty}(M)$ defined by :
$$F*G:=\sum_{r=0}^{+\infty} \nu^r C_r(F,G), \textrm{ for } F,G\in C^{\infty}(M)[[\nu]]$$
such that :
\begin{enumerate}
\item $*$ is associative,
\item the $C_r$'s are bidifferential $\nu$-linear operators,
\item $C_0(F,G)=FG$ and $C_1^-(F,G):=C_1(F,G)-C_1(G,F)=\{F,G\}$,
\item the constant function $1$ is a unit for $*$ (i.e. $F*1=F=1*F$).
\end{enumerate}

The existence of star product on symplectic manifolds was first obtained by Dewilde--Lecomte \cite{DWL}, and also by Fedosov \cite{fed} and Omori--Maeda--Yoshioka \cite{OMY}. Kontsevitch \cite{Kon} proved the existence of star products on any Poisson manifold.

\begin{ex} \label{examplemoyal}
Consider the vector space $\R^{2n}$ endowed with linear symplectic structure 
$$\omegalin:=\frac{1}{2}(\omegalin)_{ij}dx^i\wedge dx^j.$$ 
The Moyal star product of $F$ and $G\in C^{\infty}(\R^{2n})$ is defined by:
\begin{eqnarray}
(F*_{\textrm{Moyal}}G)(x) & := &\left.\left( \exp\left(\frac{\nu}{2}\Lambda^{ij} \partial_{y^i} \partial_{z^j}\right) F(y)G(z) \right)\right|_{y=z=x} \nonumber \\
 & = & \sum_{r=0}^{+\infty} \left(\frac{\nu}{2}\right)^r \frac{1}{r!}\Lambda^{i_1j_1}\ldots \Lambda^{i_rj_r} \frac{\partial^r F}{\partial x^{i_1} \ldots \partial x^{i_r}}(x)\frac{\partial^r G}{\partial x^{j_1} \ldots \partial x^{j_r}}(x) \label{eq:Moyal}, \nonumber
\end{eqnarray}
where $\Lambda^{ij}$ denotes the coefficients of the inverse matrix of $(\omegalin)_{ij}$.
\end{ex}

On symplectic manifolds, the classification of star products up to equivalence was obtained by Bertelson--Cahen--Gutt \cite{bertcagutt},  Deligne \cite{del}, Nest--Tsygan \cite{NT}.

Let $*$ and $*'$ be two star products on $C^{\infty}(M)[[\nu]]$, they are said to be \emph{equivalent} if there exists a formal power series of differential operators $T$ of the form:
$$T=Id+\sum_{r=1}^{+\infty} \nu^r T_r,$$
such that 
$$T(F)*'T(G)=T(F*G).$$
Remark that such a series $T$ is invertible as a formal power series, so that if $*$ and $T$ are given, the above equation determines a star product $*'$.

\begin{theorem}[Bertelson--Cahen--Gutt \cite{bertcagutt},  Deligne \cite{del}, Nest--Tsygan \cite{NT}]
On a symplectic manifold $(M,\omega)$ the equivalence classes of star products are in bijection with the space $H^2_{\textrm{dR}}(M)[[\nu]]$ of formal power series in $\nu$ with coefficients in the second de Rham cohomology group of $M$.
\end{theorem}

\noindent It means, in particular, that the Moyal star product is the local model of star product up to equivalence.

Consider a star product $*$ and $C^{\infty}_c(M)$ be the space of smooth functions with compact support. A \emph{trace} for $*$ is a $\R[[\nu]]$-linear map 
\begin{equation*}
\tr:C^{\infty}_c(M)[[\nu]] \rightarrow \R[\nu^{-1},\nu]]:F\mapsto \tr(F):=\nu^l \sum_{r=0}^{\infty}\nu^r \tau_r(F)
\end{equation*}
such that $\tr([F,G]_*)=0$, for $[F,G]_*:=F*G-G*F$ with $F,G \in C^{\infty}_c(M)[[\nu]]$ and $l\in \Z$.

Denoting by $C_k^-$ the anti-symmetrization of the bidifferential operators $C_k$ defining $*$, one obtains a family of equation for the $\tau_r$'s:  for all $k\geq 0$ and $ F,G \in C^{\infty}_c(M)[[\nu]]$,
\begin{equation} \label{eq:eqtrace}
\tau_k(\{F,G\})+\tau_{k-1}(C_2^-(F,G))+\ldots + \tau_0(C_{k+1}^-(F,G))=0.
\end{equation}

\begin{theorem}[Fedosov \cite{fed}, Nest--Tsygan \cite{NT}, Gutt--Rawnsley \cite{gr}]
Any star product on a symplectic manifold $(M,\omega)$ admits a trace which is unique up to multiplication by an element of $\R[\nu^{-1},\nu]]$. Moreover, any traces is given by an $L^2$-pairing with a formal function $\rho\in C^{\infty}(M)[\nu^{-1},\nu]]$:
\begin{equation} \label{eq:defdens}
\tr(F)=\frac{1}{\nu^m}\int_M F\rho \dvol.
\end{equation}
\end{theorem}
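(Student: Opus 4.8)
The plan is to reduce the entire statement to two ingredients and a single order-by-order construction: the explicit local model furnished by the Moyal product (Example \ref{examplemoyal}), and one global cohomological fact about Poisson brackets on a connected manifold. Existence of the trace, its representation as an $L^2$-pairing with a smooth density, and its uniqueness will all be read off from the same recursion, so I would not treat them as three separate theorems but as three outputs of one inductive scheme.

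First I would record the two ingredients. For the Moyal product on $(\R^{2m},\omegalin)$, integration against the Liouville form is a trace: since relabelling the contracted indices shows $C_r(G,F)=(-1)^r C_r(F,G)$, the operator $C_r^-$ vanishes pointwise for even $r$, while for odd $r$ one integration by parts combined with the total antisymmetry of the $\Lambda^{ij}$ forces the contracted scalar to equal minus itself; hence $\int_{\R^{2m}} C_r^-(F,G)\dvol=0$ for every $r\ge 1$, so $\int[F,G]_*\dvol=0$. The global input is that on a connected $M$ any linear functional on $C^{\infty}_c(M)$ killing all Poisson brackets is a scalar multiple of $F\mapsto\int_M F\dvol$. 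Introducing $\delta(\rho)(F,G):=\int_M\{F,G\}\rho\dvol$, this follows from $\{F,G\}\dvol=d\,\iota(X_F)(G\dvol)$, which after Stokes gives $\delta(\rho)(F,G)=\int_M X_F(\rho)\,G\dvol$; thus $\Ker\delta$ is exactly the constants.

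Second, I would build $\rho=1+\nu\rho_1+\nu^2\rho_2+\cdots$ and set $\tr(F):=\frac{1}{\nu^m}\int_M F\rho\dvol$. Imposing $\tr([F,G]_*)=0$ and collecting the coefficient of $\nu^n$ gives, since $C_1^-=\{\cdot,\cdot\}$, the recursion $\delta(\rho_{n-1})=-\Omega_n$, where $\Omega_n(F,G):=\int_M\big(C_n^-(F,G)+\sum_{s=1}^{n-2}C_{n-s}^-(F,G)\rho_s\big)\dvol$ depends only on the already-chosen $\rho_1,\dots,\rho_{n-2}$. Solving it at every order produces a trace with smooth density, establishing existence and the $L^2$-representation at once. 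For uniqueness, $\Ker\delta=\R$ shows each $\rho_{n-1}$ is pinned down up to an additive constant; an arbitrary trace $\nu^l(\tau_0+\nu\tau_1+\cdots)$ has, by its lowest trace equation, $\tau_0$ killing brackets, hence $\tau_0=c_0\int$, and subtracting the matching Laurent multiple of the constructed trace raises the leading order, the new leading correction again killing brackets. Iterating in the $\nu$-adic topology exhibits every trace as an $\R[\nu^{-1},\nu]]$-multiple of the constructed one, which is the asserted uniqueness and simultaneously upgrades the $L^2$-form to all traces.

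The main obstacle is the solvability of the recursion, i.e. showing $\Omega_n\in\Im\delta$ at every order. This is not formal: one must verify that the obstruction class $\Omega_n\bmod\Im\delta$ vanishes. I would prove this by combining the associativity of $*$, which makes the family $\Omega_n$ satisfy a cocycle relation among the antisymmetrized bidifferential operators, with the local normal form: by Darboux and the classification of star products every $*$ is locally equivalent to Moyal, where the first ingredient already exhibits integration as a trace and an equivalence transforms traces into traces, so the obstruction is locally trivial; the residual global obstruction is then a de Rham class that one shows to be exact. This cohomological vanishing is the technical heart of the theorem and is exactly where the arguments of Fedosov, Nest--Tsygan and Gutt--Rawnsley are genuinely required.
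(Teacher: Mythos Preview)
Your uniqueness argument coincides with the paper's: both use that a linear functional killing all Poisson brackets is a multiple of integration (the paper cites \cite{BRW} for this), and then run the same $\nu$-adic induction to conclude that any two traces differ by an element of $\R[\nu^{-1},\nu]]$.

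For existence, your route diverges from the paper's and has a gap at the step you yourself flag as the heart of the matter. You set up a global recursion $\delta(\rho_{n-1})=-\Omega_n$ and assert that the obstruction to solving it is ``a de Rham class that one shows to be exact''. You neither identify that class precisely nor give any mechanism for exactness; on a manifold with $H^1_{\mathrm{dR}}(M)\ne 0$ there is no reason for a closed form to be exact, and nothing in your cocycle-from-associativity sketch supplies one. Your final sentence defers this vanishing to the arguments of Fedosov, Nest--Tsygan and Gutt--Rawnsley, but none of those references proceeds by exhibiting and killing such an obstruction class, so the deference is to an argument that is not there.

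The paper's proof (following \cite{gr}) avoids the obstruction analysis altogether by a different mechanism. One covers $M$ by Darboux charts; on each chart the star product is equivalent to Moyal via some $T$, so $F\mapsto\frac{1}{\nu^m}\int T(F)\,\omega_0^m/m!$ is a local trace with density $T'(1)$. The substantive idea your outline is missing is how these local traces are forced to agree on overlaps: this is Karabegov's \emph{normalization}. A trace is called normalized if $\tr(DF)=\nu\frac{\partial}{\partial\nu}\tr(F)$ for every local $\nu$-Euler derivation $D=\nu\frac{\partial}{\partial\nu}+\Lr_\xi+\hat D$ with $\xi$ conformally symplectic. One proves that a normalized trace is unique and that the local Moyal-pullback traces are already normalized; hence they patch automatically to a global trace with smooth density. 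This normalization device is what replaces your unproved cohomological vanishing, and it is the point of the Gutt--Rawnsley argument the paper is summarizing.
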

The formal function $\rho$ in Equation (\ref{eq:defdens}) is called a \emph{trace density}. It is unique up to multiplication by an element of $\R[\nu^{-1},\nu]]$.

\begin{proof}[Sketch of proof]
We summarize the proof given in \cite{gr}. 

For uniqueness, observe that for $k=0$, Equation \ref{eq:eqtrace} becomes $\tau_0(\{F,G\})=0$ for all pairs of functions. Then, it is shown in \cite{BRW} that $\tau_0$ is the integration functional up to a multiple. Uniqueness follows from an induction. For any two traces of the form $\tau:=\sum_{r\geq 0} \nu^r\tau_r$ and $\tau':=\sum_{r\geq 0}\nu^r\tau'_r$ that coincide up to order $k-1\geq 1$, then there difference is also a trace, it means $\tau_k-\tau'_k$ vanishes on Poisson brackets and is then a multiple of the integral. Hence, there exists a constant $C$ such that $\tau$ and $(1+C\nu^k)\tau'$ coincide up to order $k$.

One way to construct traces for general star product is to patch together canonical traces for the local model of star product: the Moyal star product. 
Indeed, one observes that for $*_{\textrm{Moyal}}$ the Moyal star product on $(\R^{2m},\omega_0:=\sum_{i=1}^{m} dx^i\wedge dy^i)$ with standard coordinates $(x^i,y^i)$, the integral is a trace
$$\tr^{*_{\textrm{Moyal}}}(F):=\frac{1}{\nu^m}\int_M F \frac{\omega_0^m}{m!},\ \forall F\in C^{\infty}_c(\R^{2m})[[\nu]].$$
The factor $\frac{1}{\nu^m}$ normalises the trace functional in the following meaning. If $\xi$ is a conformal symplectic vector field on $(\R^{2m},\omega_0:=\sum_{i=1}^{n} dx^i\wedge dy^i)$, i.e. $\Lr_{\xi}\omega_0=\omega_0$, then the operator $D^{\xi}:=\Lr_{\xi}+\nu\frac{\partial}{\partial\nu}$ is a derivation of $*_{\textrm{Moyal}}$, i.e. $D^{\xi}(F*_{\textrm{Moyal}}G)=D^{\xi}(F)*_{\textrm{Moyal}}G+F*_{\textrm{Moyal}}D^{\xi}G$. The trace $\tr^{*_{\textrm{Moyal}}}$ is normalised in the sense that it satisfies the equation
\begin{equation*} \label{eq:normaltrace}
\tr^{*_{\textrm{Moyal}}}(D^{\xi}F)=\nu \frac{\partial}{\partial\nu}\tr^{*_{\textrm{Moyal}}}(F).
\end{equation*}

Now, if $\tilde{*}$ is any star product on $(\R^{2m},\omega_0)$, it is equivalent to $*_{\textrm{Moyal}}$ through an operator $T=Id+\nu\ldots$, such that $TF*_{\textrm{Moyal}}TG=T(F\tilde{*}G)$. One obtain a trace for $\tilde{*}$
\begin{equation*}
\tr^{\tilde{*}}(F):=\tr^{*_{\textrm{Moyal}}}(T(F))=\frac{1}{\nu^m}\int_M T(F) \frac{\omega_0^m}{m!},\ \forall F\in C^{\infty}_c(\R^{2n})[[\nu]].
\end{equation*}
The trace density of $\tr^{\tilde{*}}$ is $\rho^{\tilde{*}}:=T'(1)$ for $T'$ the formal adjoint of $T$ (with respect to $\frac{\omega_0^m}{m!}$), as by definition
$$\frac{1}{\nu^m}\int_M T(F)\frac{\omega_0^m}{m!}=\frac{1}{\nu^m}\int_M FT'(1) \frac{\omega_0^m}{m!}.$$
Note that the operator $D:=T^{-1}\circ D^{\xi}\circ T=\nu\frac{\partial}{\partial\nu}+\Lr_{\xi}+D'$, for $D'$ a formal differential operator, is a derivation of $\tilde{*}$. The trace $\tr^{\tilde{*}}$ satisfies the equation
\begin{equation} \label{eq:normaltrace*tilde}
\tr^{\tilde{*}}(DF)=\nu \frac{\partial}{\partial\nu}\tr^{\tilde{*}}(F).
\end{equation}

So that for a general symplectic manifold equipped with a star product $*$, one first constructs local traces on a Darboux open cover. Then, to globalise the local trace constructed one has to be sure they co\"incide on intersections of Darboux charts on a general symplectic manifold. For this, a normalisation condition introduced by Karabegov comes into play. Define \emph{$\nu$-Euler derivation} of the star product to be local derivation of the star product $*$ of the form
$$D:=\nu \frac{\partial}{\partial \nu} + \Lr_{\xi} + \hat{D},$$
for $\hat{D}$ a series of local differential operators and $\xi$ is a conformal symplectic vector field. A trace $\tr$ for a star product is called \emph{normalised} if it satisfies
$$\tr(DF)=\nu \frac{\partial}{\partial \nu} \tr(F),$$
in any open set $U$ and for any $\nu$-Euler derivation $D$ on $U$ (in fact, one is enough in any $U$). One then shows that normalised traces are unique. Finally, the local trace $\tr^{\tilde{*}}$ is normalised by Equation (\ref{eq:normaltrace*tilde}).
\end{proof}

A star product is called \emph{closed up to order $l$} if the integration map is a trace modulo terms in $\nu^{l+1}$:
\begin{equation*}
\int_M F*G\dvol=\int_M G*F\dvol + O(\nu^{l+1})
\end{equation*}
for all $F,G \in C^{\infty}_c(M)[[\nu]]$. We will say a star product is \emph{closed} if it is closed up to any order. It means that formal constants are the trace densities.

\begin{rem}
Our definition of closed star product corresponds to strongly closed star product in \cite{CFS}. We drop the strongly as our work is mainly focused on closedness up to order $3$ of Fedosov star product.
\end{rem}

Any star product on a symplectic manifold is equivalent to a strongly closed star product \cite{OMY2}. On a closed symplectic manifold $(M,\omega)$, Karabegov \cite{Kara} gave a short proof of how to get a closed star product equivalent to a given one. Consider a star product $*$ with trace density $\rho$ such that 
$$\int_M \rho \dvol = \int_M \dvol.$$
This is always possible as we can modify $\rho$ with a formal constant. Then, one obtains a formal exact $2m$-form $\rho\dvol- \dvol=\sum_{r\geq1}\nu^r\mu_r$, so that for all $r\geq 1$, $\mu_r=d\alpha_r$ for some $(2m-1)$-form $\alpha_r$. Set $X_r$ the vector fields such that $-i(X_r)\dvol=\alpha_r$ and define $B:= Id+\sum_{r\geq1} \nu^rX_r$ a formal series of first order differential operators. We can define a star product $\tilde{*}$ equivalent to $*$ by the formula $F\tilde{*}G:=B^{-1}(BF*BG)$ for $F,G \in C^{\infty}(M)[[\nu]]$. It is clear that $\tr^*$ determines a trace for $*'$ by the formula
$$\tr^{\tilde{*}}(BF)=\tr^*(F)=\int_M F \rho \dvol.$$
On the other hand, 
$$\int_M BF \dvol=\int_M F \dvol - \sum_{r\geq1} \nu^r\int_M F\Lr_{X_r}\dvol =\int_M F \rho \dvol,$$
where we used integration by part and then $\Lr _{X_r}\dvol=di(X_r)\dvol=-\mu_r$. It means that $\tr^{\tilde{*}}$ is the integral and hence $\tilde{*}$ is strongly closed.

\subsection{Moment maps and trace densities} \label{sect:momenttrace}

From above, we know trace densities lowest order term in $\nu$ is a constant. In this section, we focus on the next order term in $\nu$. It turns out that this term admits a moment map interpretation in various examples of star products \cite{LLF}.

\subsubsection*{The space of symplectic connections}

Symplectic connections are the main tool to construct star products on symplectic manifolds. Already in \cite{BFFLS} a symplectic connection is used to build a truncated star product up to order $3$ in $\nu$ (precisely the order we are interested in this subsection). Also, in \cite{gr3}, it is proved that star products (in fact the $C_2$ term) determines a unique symplectic connection.

A symplectic connection is a connection $\nabla$ on a symplectic manifold $(M,\omega)$ satisfying $\nabla \omega=0$ and $T^{\nabla}=0$, for $T^{\nabla}$ being the torsion tensor. There always exists a symplectic connection on a symplectic manifold. Indeed, consider a torsion free connection $\nabla^0$ on $M$ and define the tensor $N$ on $M$ by
$$\nabla^0_X\omega (Y,Z):=\omega(N(X,Y)Z).$$
Then, the connection $\nabla$ defined by 
$$\nabla_X Y:=\nabla^0_X Y+ \frac{1}{3}\left(N(X,Y)+N(Y,X)\right)$$
is a symplectic connection.

Moreover, any two symplectic connections $\nabla$ and $\nabla'$ will differ by $A(X):=\nabla_X-\nabla'_X$, for $A(\cdot)$ be a $1$-form with values in the endomorphism bundle of $TM$ such that 
\begin{equation*} \label{eq:Aunderline}
\underline{A}(X,Y,Z):=\omega(A(X)Y,Z) 
\end{equation*}
is completely symmetric, i.e. $\underline{A}\in \Gamma(S^3T^*M)$. Conversely, from any symplectic connection $\nabla$ and any $\underline{A}\in \Gamma(S^3T^*M)$ the connection $\nabla+A$ is symplectic. So that, the space $\E(M,\omega)$ of symplectic connections is the affine space
$$\E(M,\omega)=\nabla + \Gamma(S^3T^*M) \textrm{ for any symplectic connection }\nabla $$

From now on, we assume $M$ is closed. The space $\E(M,\omega)$ is naturally a symplectic space admitting a symplectic action of the group of Hamiltonian automorphisms. The symplectic form $\Omega^{\E}$ is the natural pairing of the symmetric $3$-tensors $\underline{A}$ and $\underline{B}$:
\begin{equation*}
\Omega^{\E}(A,B):=\int_M \Lambda^{i_1j_1} \Lambda^{i_2j_2} \Lambda^{i_3j_3}\underline{A}_{i_1i_2i_3}\underline{B}_{i_1i_2i_3} \dvol=\int_M \tr(A\extwedge B)\wedge \dsubvol
\end{equation*}
where $\extwedge$ is the wedge product on the form part and the composition of the endormorphism part. 

A symplectic diffeomorphism $\varphi$ acts on $\E(M,\omega)$  by:
$$(\varphi.\nabla)_X Y:=\varphi_*(\nabla_{\varphi^{-1}_* X}\varphi^{-1}_* Y),$$
for all $X,Y \in TM$ and $\nabla \in \E(M,\omega)$ and this action is symplectic. In particular, the group $\ham(M,\omega)$ acts symplectically on $\E(M,\omega)$.

Denote by $C^{\infty}_0(M)$ the space of smooth functions with zero integral. It is naturally identified to the Lie algebra of Hamiltonian vector fields through the relation $i(X_F)\omega=dF$ for $F \in C^{\infty}_0(M)$. The infinitesimal action of $-X_F$ on $\E(M,\omega)$ is simply the Lie derivative:
\begin{equation}\label{eq:Liedernabla}
\left(\Lr_{X_F}\nabla\right)(Y)Z=\nabla^2_{(Y,Z)}X_F + R^{\nabla}(X_F,Y)Z, 
\end{equation}
where $\nabla^2_{(U,V)}W:=\nabla_U\nabla_V W - \nabla_{\nabla_U V}W$ is the second covariant derivative and $R^{\nabla}(U,V)W:=[\nabla_U,\nabla_V]W-\nabla_{[U,V]}W$ is the curvature tensor of $\nabla$, for $U,V,W \in \Gamma(TM)$.

Recall the definition of the Ricci tensor $Ric^{\nabla}(X,Y):=\tr[V\mapsto R^{\nabla}(V,X)Y]$
for all $X,Y \in TM$. Set $P(\nabla)$ be the function defined, using a multiple of the first Pontryagin form of the manifold, by 
$$P(\nabla)\dvol:=\frac{1}{2}\tr(R^{\nabla}(.,.)\extwedge R^{\nabla}(.,.))\wedge \dsubsubvol.$$
Finally, define the  map $\mu:\mathcal{E}(M,\omega) \rightarrow C^{\infty}_0(M)$ by
\begin{equation}\label{CG momentum}
\mu(\nabla):=(\nabla^2_{(p,q)} Ric^{\nabla})^{pq} + P(\nabla)
\end{equation}
where the indices are raised using the symplectic form. We call $\mu(\nabla)$ the {\it Cahen-Gutt momentum} of 
$\nabla$. 

\begin{theorem} [Cahen--Gutt \cite{cagutt}] \label{theor:momentE}
The map $\mu:\mathcal{E}(M,\omega) \rightarrow C^{\infty}(M)$ is an equivariant moment map for the action of $\ham(M,\omega)$ on $\E(M,\omega)$, i.e.
\begin{equation} \label{eq:momentmu}
\left.\frac{d}{dt}\right|_{0} \int_M \mu(\nabla+tA)F\dvol=\Omega^{\E}_{\nabla}(\Lr_{X_F}\nabla,A).
\end{equation}
\end{theorem}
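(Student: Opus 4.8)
The plan is to first dispose of equivariance by naturality, and then reduce the theorem to the infinitesimal identity \eqref{eq:momentmu}, which I would prove by linearizing $\mu$ along the affine path $\nabla+tA$ and integrating by parts. For equivariance: the Cahen--Gutt momentum \eqref{CG momentum} is built only from $\nabla$ and $\omega$ by natural tensorial operations (covariant differentiation, curvature, the Ricci contraction, raising indices with $\omega$, and the Chern--Weil Pontryagin density), so it commutes with pullback, i.e. $\mu(\varphi\cdot\nabla)=(\varphi^{-1})^*\mu(\nabla)$ for $\varphi\in\ham(M,\omega)$. Together with the $\ham(M,\omega)$-invariance of $\dvol$ and of $\Omega^{\E}$, this yields equivariance, so everything rests on \eqref{eq:momentmu}.

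For the linearization, write $\nabla^t=\nabla+tA$ with $\underline A\in\Gamma(S^3T^*M)$ and denote $t$-derivatives at $0$ by a dot. The curvature linearizes to the antisymmetrized covariant derivative,
\begin{equation*}
\dot R(X,Y)Z=(\nabla_XA)(Y)Z-(\nabla_YA)(X)Z,
\end{equation*}
whence $\dot{\Ric}(X,Y)=\tr\big[V\mapsto\dot R(V,X)Y\big]$. Differentiating the first term of $\mu$ produces two contributions: one from varying the Ricci tensor, $\nabla^2_{(p,q)}\dot{\Ric}$, and one algebraic term of the form $A\ast\nabla\Ric^{\nabla}$ coming from varying the connection inside the second covariant derivative $\nabla^2$. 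Differentiating the Pontryagin density, by Chern--Weil and the second Bianchi identity $d^{\nabla}R^{\nabla}=0$, gives $\tr(\dot R\extwedge R^{\nabla})=d\,\tr(A\extwedge R^{\nabla})$, so $\dot P\,\dvol$ is exact up to the wedge with $\dsubsubvol$.

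To extract \eqref{eq:momentmu}, I would multiply $\dot\mu$ by $F$ and integrate. The Pontryagin contribution integrates by parts directly: using $d\omega=0$ one transfers $d$ onto $F$, and $dF=\imath(X_F)\omega$ converts the result into a pairing of $A$ against a curvature expression in $X_F$. In the Ricci contribution I would integrate the two derivatives of $\nabla^2_{(p,q)}$ off $\dot{\Ric}$ and onto $F$, repeatedly turning $dF$ into $\imath(X_F)\omega$ and using $\nabla\omega=0$ to commute the symplectic raising operators past covariant derivatives; the complete symmetry of $\underline A$ lets me symmetrize every endomorphism-valued coefficient contracted against it.

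The crux, and the main obstacle, is to show that after these manipulations the whole expression collects into $\int_M\tr\big((\Lr_{X_F}\nabla)\extwedge A\big)\wedge\dsubvol$, i.e. the pairing with the infinitesimal action \eqref{eq:Liedernabla}. That formula has exactly two pieces, the Hessian $\nabla^2_{(Y,Z)}X_F$ and the curvature term $R^{\nabla}(X_F,Y)Z$: the second-derivative-in-$F$ terms from the Ricci variation must assemble into $\nabla^2X_F$ paired symmetrically against $\underline A$, while the lower-order curvature terms from the Ricci variation and from the Pontryagin variation must combine — through the first Bianchi identity and the symmetries of the symplectic curvature — into precisely the $R^{\nabla}(X_F,\cdot)\cdot$ piece, with every remaining term cancelling. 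The only structural inputs are the symmetry of $\underline A$ and the two Bianchi identities; the real difficulty is the heavy index bookkeeping and verifying that no stray term survives.
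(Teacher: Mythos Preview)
The paper does not prove this theorem; it is stated with attribution to Cahen--Gutt \cite{cagutt} and the text immediately moves on to applications. So there is no proof in the paper to compare against.

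Your outline is the standard route and is structurally sound: equivariance by naturality is correct, the linearization $\dot R(X,Y)Z=(\nabla_XA)(Y)Z-(\nabla_YA)(X)Z$ is correct, and the Chern--Weil transgression $\tr(\dot R\extwedge R^{\nabla})=d\,\tr(A\extwedge R^{\nabla})$ via the second Bianchi identity is exactly how one handles the Pontryagin piece. The ingredients you name (complete symmetry of $\underline A$, the two Bianchi identities, $\nabla\omega=0$, and repeated integration by parts converting $dF$ into $\imath(X_F)\omega$) are the right ones.

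That said, what you have written is a plan, not a proof. You flag that ``the real difficulty is the heavy index bookkeeping and verifying that no stray term survives'' and then do none of it. The entire content of the theorem \emph{is} that this bookkeeping closes up; you have not established it. In particular, the claim that the curvature terms from the Ricci variation and from the Pontryagin transgression ``must combine'' into the $R^{\nabla}(X_F,\cdot)\cdot$ piece of $\Lr_{X_F}\nabla$ is precisely the nontrivial cancellation, and it requires actually writing out the symplectic curvature symmetries (e.g.\ that $\omega(R^{\nabla}(X,Y)Z,W)$ is symmetric in $Z,W$ and satisfies the first Bianchi identity) and checking the signs. Until that computation is carried out, the argument is incomplete.
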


We now describe the link between the moment map $\mu$ and the trace density for star products. We will only consider truncated star products up to order $3$. In \cite{BFFLS}, it was shown that on any symplectic manifold, a generalisation of the Moyal $*$-product up to order $3$, which we will call truncated star product $*_{\nabla}^3$, can be obtained using a symplectic connection $\nabla$:
\begin{equation} \label{eq:fedtrunc}
F*_{\nabla}^3 G:= FG+\frac{\nu}{2}\{F,G\}+\frac{\nu^2}{8} \Lambda^{i_1j_1} \Lambda^{i_2j_2} \nabla^2_{i_1i_2} F\nabla^2_{j_1j_2} G+ \frac{\nu^3}{48}S^3_\nabla (F,G),
\end{equation}
for 
$$S^3_\nabla (F,G):= \Lambda^{i_1j_1} \Lambda^{i_2j_2} \Lambda^{i_3j_3}\underline{\Lr_{X_F}\nabla}_{i_1i_2i_3}\underline{\Lr_{X_G}\nabla}_{j_1j_2j_3}.$$
The bidifferential operator $S^3_\nabla$ is a cocycle for the Chevalley cohomology of $(C^{\infty}(M),\{\cdot,\cdot\})$ with respect to the adjoint representation onto itself. It is never exact and its cohomology class is independent of $\nabla$, see \cite{BFFLS}.

One geometric way to prolong formula (\ref{eq:fedtrunc}) to get a star product is to go through the Fedosov construction \cite{fed}, see next section.

\begin{prop} \label{prop:trunctrace}
The functional $F\mapsto \tau(F):=\int_M F \dvol - \frac{\nu^2}{24}\int_M F \mu(\nabla)\dvol$ gives a ``truncated'' trace for the truncated star product $*_{\nabla}^3$.
\end{prop}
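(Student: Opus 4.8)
The plan is to verify the trace equations \eqref{eq:eqtrace} directly for the data $\tau_0(F):=\int_M F\dvol$, $\tau_1:=0$ and $\tau_2(F):=-\tfrac{1}{24}\int_M F\mu(\nabla)\dvol$, noting that for a product truncated at order $3$ only the equations with $k=0,1,2$ (those involving $C_1^-,\ldots,C_3^-$) are relevant. First I would record the antisymmetrisations of the operators in \eqref{eq:fedtrunc}. Axiom (3) gives $C_1^-=\{\cdot,\cdot\}$. The operator $C_2(F,G)=\tfrac18\Lambda^{i_1j_1}\Lambda^{i_2j_2}\nabla^2_{i_1i_2}F\,\nabla^2_{j_1j_2}G$ is symmetric, since relabelling $i_s\leftrightarrow j_s$ introduces two sign changes $\Lambda^{j_si_s}=-\Lambda^{i_sj_s}$ that cancel, so $C_2^-=0$; the same count applied to $S^3_\nabla$, which carries three copies of $\Lambda$, gives $S^3_\nabla(G,F)=-S^3_\nabla(F,G)$, whence $C_3^-=\tfrac{1}{24}S^3_\nabla$.

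The equations for $k=0$ and $k=1$ are then immediate. For $k=0$, $\tau_0(\{F,G\})=\int_M\{F,G\}\dvol=0$ because $\{F,G\}\dvol=\Lr_{X_F}(G\dvol)=d\,\iota(X_F)(G\dvol)$ is exact on the closed manifold $M$ (using $\Lr_{X_F}\dvol=0$ and Cartan's formula); for $k=1$, the equation reduces to $\tau_1(\{F,G\})+\tau_0(C_2^-(F,G))=0$, true since $\tau_1=0$ and $C_2^-=0$. The whole statement therefore rests on the equation for $k=2$,
\begin{equation*}
\tau_2(\{F,G\})+\tau_0(C_3^-(F,G))=\frac{1}{24}\left(\int_M S^3_\nabla(F,G)\dvol-\int_M\{F,G\}\mu(\nabla)\dvol\right)=0,
\end{equation*}
that is, on the identity $\int_M S^3_\nabla(F,G)\dvol=\int_M\{F,G\}\mu(\nabla)\dvol$.

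To establish it I would identify, straight from the definitions, the left-hand side with the symplectic pairing of the two fundamental vector fields: as $S^3_\nabla(F,G)$ is the total contraction of $\underline{\Lr_{X_F}\nabla}$ and $\underline{\Lr_{X_G}\nabla}$ by three copies of $\Lambda$, one has $\int_M S^3_\nabla(F,G)\dvol=\Omega^{\E}_\nabla(\Lr_{X_F}\nabla,\Lr_{X_G}\nabla)$. Feeding $A=\Lr_{X_G}\nabla$ into the Cahen--Gutt moment map equation \eqref{eq:momentmu} gives
\begin{equation*}
\Omega^{\E}_\nabla(\Lr_{X_F}\nabla,\Lr_{X_G}\nabla)=\left.\frac{d}{dt}\right|_0\int_M\mu(\nabla+t\Lr_{X_G}\nabla)\,F\dvol.
\end{equation*}
Because $\Lr_{X_G}\nabla$ is the infinitesimal action of $-X_G$ and $\mu$ is equivariant (Theorem \ref{theor:momentE}), the curve $\nabla+t\Lr_{X_G}\nabla$ agrees to first order with $(\varphi_t)_*\nabla$ for $\varphi_t$ the Hamiltonian flow of $-X_G$, and equivariance turns the derivative of $\mu$ into the Lie derivative $\Lr_{X_G}\mu(\nabla)=\{G,\mu(\nabla)\}$. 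Hence the right-hand side is $\int_M\{G,\mu(\nabla)\}F\dvol$, and integrating the Leibniz rule $\{G,F\mu(\nabla)\}=\{G,F\}\mu(\nabla)+F\{G,\mu(\nabla)\}$ against $\dvol$ (brackets integrating to zero, as above) rewrites it as $\int_M\{F,G\}\mu(\nabla)\dvol$, with the sign matching the coefficient of $\tau_2$. This settles the $k=2$ equation and hence the proposition.

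The main obstacle is the sign and identification bookkeeping in this last step: one must keep consistent conventions for $\{\cdot,\cdot\}$, for the fundamental vector field (``$\Lr_{X_F}\nabla$ is the action of $-X_F$''), and for the $L^2$-identification under which $\mu$ is $C^{\infty}_0(M)$-valued, and check that the index contraction in $\Omega^{\E}$ pairs $\underline{\Lr_{X_F}\nabla}$ with $\underline{\Lr_{X_G}\nabla}$ exactly as in $S^3_\nabla$. Modulo these conventions the argument is forced, the only genuine input being the Cahen--Gutt moment map property; equivalently, one may phrase the final step as the standard equivariance identity $\{\mu_F,\mu_G\}_{\E}=\mu_{\{F,G\}}$ for the $\ham(M,\omega)$-action on $\E(M,\omega)$.
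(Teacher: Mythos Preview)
Your proof is correct and follows essentially the same route as the paper: both arguments reduce the statement to the $k=2$ trace equation, identify $\int_M S^3_\nabla(F,G)\,\dvol$ with $\Omega^{\E}_\nabla(\Lr_{X_F}\nabla,\Lr_{X_G}\nabla)$, and then invoke the Cahen--Gutt moment map equation together with equivariance. The only cosmetic difference is that the paper partly \emph{derives} $\tau_1$ and $\tau_2$ (appealing to \cite{BRW} for $\tau_1$) whereas you fix them from the outset and verify directly; your integration-by-parts via the Leibniz rule for $\{\cdot,\cdot\}$ at the end is just a repackaging of the paper's use of $-\Lr_{X_G}F=\{F,G\}$.
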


\begin{proof}
Because the terms of \eqref{eq:fedtrunc} at order $0$ and $2$ in $\nu$ are symmetric in $F$ and $G$. One has
\begin{equation*} \label{eq:trunccommut}
[F, G]_{*_{\nabla}^3}:= \nu\{F,G\}+ \frac{\nu^3}{24}S^3_\nabla (F,G)
\end{equation*}

Now, consider a truncated trace functional $\tau(F):=\int_M F\dvol+\nu \tau_1(F)+\nu^2\tau_2(F)$ for the truncated star product $*_{\nabla}^3$. Clearly, Equation (\ref{eq:eqtrace}) is satisfied for $k=0$. For $k=1$, Equation (\ref{eq:eqtrace}) becomes
$$\tau_1(\{F,G\})=0,$$
which implies that $\tau_1$ is a multiple of the integral, see \cite{BRW}. As traces are unique up to multiplication by a formal constant, we can choose $\tau_1=0$. Now comes an interesting equation for $k=2$, we have
\begin{equation}\label{eq:tracek=2}
 \frac{1}{24}\int_M S^3_\nabla (F,G) \dvol + \tau_2(\{F,G\})=0.
\end{equation}
But the integral of $S^3_{\nabla}$ is the symplectic form $\Omega^{\E}$ so that the equation reduces to
\begin{equation*}
 \frac{1}{24}\Omega^{\E}_{\nabla}(\Lr_{X_F}\nabla,\Lr_{X_G}\nabla) =- \tau_2(\{F,G\}).
\end{equation*}
Finally, using the moment map Equation (\ref{eq:momentmu}) and then the equivariance, we have
\begin{eqnarray} \label{eq:tau2}
 \frac{1}{24}\int_M \mu_*(\Lr_{X_G}\nabla)F\dvol & = & - \tau_2(\{F,G\}), \nonumber \\
 -\frac{1}{24}\int_M \mu(\nabla)\Lr_{X_G}F\dvol & = & - \tau_2(\{F,G\}).
\end{eqnarray}
As $-\Lr_{X_G}F=\{F,G\}$, we see that the functional $\tau_2(H):=- \int_M H \mu(\nabla) \dvol$ for $H\in C_c^{\infty}(M)$  is a solution to Equation (\ref{eq:tau2}) and then satisfies the trace functional equation for $k=2$, that is Equation (\ref{eq:tracek=2}).
\end{proof}

\subsection{Fedosov construction}

Fedosov builds in \cite{fed2} a star product on any symplectic manifold. His construction is obtained by identifying $C^{\infty}(M)[[\nu]]$ with the algebra of flat sections of the Weyl bundle $\W$ endowed with a flat connection.

Consider the vector space $(\R^{2m},\omegalin:=\frac{1}{2}(\omegalin)_{ij}dx^i\wedge dx^j)$ as in Example \ref{examplemoyal}.
Let $\{y^i \left| i=1,\ldots,2m\}\right.$ a basis of the dual space $(\R^{2m})^*$. The \emph{formal Weyl algebra} $(\Walg,\circ)$ is the algebra over $\R[[\nu]]$ of formal power series of the form
\begin{equation} \label{eq:Weylalgel}
a(y,\nu):=\sum_{2k+l\geq 0} \nu^k a_{k,i_1\ldots i_l}y^{i_1}\ldots y^{i_l},
\end{equation}
two of its elements being multiplied using the \emph{Moyal star product}
\begin{eqnarray}
(a\circ b)(y,\nu) & := &\left.\left( \exp\left(\frac{\nu}{2}\Lambda^{ij} \partial_{y^i} \partial_{z^j}\right)a(y,\nu)b(z,\nu)\right)\right|_{y=z} \nonumber \\
 & = & \sum_{r=0}^{+\infty} \frac{1}{r!}\left(\frac{\nu}{2}\right)^r \Lambda^{i_1j_1}\ldots \Lambda^{i_rj_r} \frac{\partial^r a}{\partial y^{i_1} \ldots \partial y^{i_r}}\frac{\partial^r b}{\partial y^{j_1} \ldots \partial y^{j_r}} \label{eq:MoyalWeyl}. \nonumber
\end{eqnarray}
We assign degree $1$ to the variables $y^i$ and degree $2$ for the variable $\nu$, so that terms in Equation (\ref{eq:Weylalgel}) are ordered by degree.

The formal Weyl algebra is naturally equipped with an action of the symplectic linear group $\Sp:=\Sp(\R^{2m},\omegalin)$. That is, for a matrix $(A^i_j) \in \Sp$ and $a(y,\nu)$ as in Equation (\ref{eq:Weylalgel}), define
\begin{equation*}
\rho(A)a(y,\nu):=a(y\circ A^{-1},\nu)=\sum_{2k+l\geq 0} \nu^k a_{k,i_1\ldots i_l}(A^{-1})^{i_1}_{j_1} \ldots (A^{-1})^{i_l}_{j_l} y^{j_1}\ldots y^{j_l}.
\end{equation*}
Because $A$ preserves the symplectic form $\omegalin$, the action preserves the Moyal product $\circ$.
Its differential gives a Lie algebra action of the Lie algebra $\mathfrak{sp}$ of the Lie group $\Sp$.
\begin{equation*}
d\rho(B)a(y,\nu):=\frac{1}{2\nu}[\omega_{ji}B^i_ly^jy^l,a(y,\nu)]_{\circ},
\end{equation*}
for $B\in \mathfrak{sp}$ (i.e. $\omega_{ji}B^i_l$ is symmetric in $j,l$), where $[\cdot,\cdot]_{\circ}$ denotes the $\circ$-commutator. This action enables one define the formal Weyl bundle on any symplectic manifold and lift symplectic connections to it.

We now consider a symplectic manifold $(M,\omega)$. A symplectic frame at $x\in M$ is the data of a basis $\{e_i\left| i=1,\ldots,2m\}\right.$ of $T_xM$ such that $\omega(e_i,e_j)=(\omegalin)_{ij}$. The union of all symplectic frames at any point of $M$ forms a $\Sp$-principal bundle $F(M,\omega)$ called the symplectic frame bundle.

The \emph{formal Weyl bundle} is the vector bundle of Weyl algebra associated to the frame bundle:
$$\W:=F(M,\omega)\times_{\Sp} \Walg.$$
The sections of the Weyl bundle writes locally as formal power series:
\begin{equation*}
a(x,y,\nu):=\sum_{2k+l\geq 0} \nu^k a_{k,i_1\ldots i_l}(x)y^{i_1}\ldots y^{i_l},
\end{equation*}
where $a_{k,i_1\ldots i_l}(x)$ are, in the indices $i_1,\ldots,i_l$, the components of a symmetric tensor on $M$ and we call $2k+l$ the $\W$-degree (inherited from $\Walg$) of $ \nu^k a_{k,i_1\ldots i_l}(x)y^{i_1}\ldots y^{i_l}$. The space of sections of $\W$, denoted by $\Gamma \W$, has a structure of an algebra
defined by the fiberwise product
\begin{equation*}
(a \circ b)(x,y,\nu):= \Big( \exp(\frac{\nu}{2}\Lambda^{ij} \partial_{y^i} \partial_{z^j})a(x,y,\nu)b(x,z,\nu)\Big)|_{y=z}
\end{equation*}

To describe connections on $\W$ and curvature forms, we will consider the bundle $\W \otimes \Lambda(M)$ of forms with values in the Weyl algebra. Sections in $\Gamma \W \otimes \Lambda(M)$ admit local expression:
\begin{equation*}
\sum_{2k+l\geq 0} \nu^k a_{k,i_1\ldots i_l,j_1\ldots j_p}(x)y^{i_1}\ldots y^{i_l}dx^{j_1}\wedge \ldots \wedge dx^{j_p}.
\end{equation*}
The $a_{k,i_1\ldots i_l,j_1\ldots j_p}(x)$ are, in the indices $i_1,\ldots,i_l,j_1,\ldots,j_p $, the components of a tensor on $M$, symmetric in the 
$i$'s and antisymmetric in the $j$'s. The $\circ$-product extends to the space $\Gamma \W\otimes \Lambda^*(M)$, for $a\otimes \alpha$ and $b\otimes \beta\in \Gamma \W\otimes \Lambda^*(M)$, we define
$(a\otimes \alpha) \circ (b\otimes \beta) := a\circ b \otimes \alpha\wedge \beta$. The $\W$-valued forms inherits
the structure of a graded Lie algebra from the graded commutator $[s,s']_{\circ}:=s\circ s'- (-1)^{q_1q_2}s'\circ s$, where
$s$ is a $q_1$-form and $s'$ a $q_2$-form (anti-symmetric degree).

Consider now a symplectic connection $\nabla$ on $(M,\omega)$. It lifts to a connection $1$-form on the frame bundle $F(M,\omega)$ which induces a covariant derivative $\partial$ of $\Gamma \W$. Writing $\Gamma^k_{ij}$ the Christoffel symbols of the symplectic connection $\nabla$, the induced $\partial$ acts on sections as
\begin{equation*}
\partial a := da + \frac{1}{\nu}[\overline{\Gamma},a]_{\circ} \in \Gamma \W\otimes \Lambda^1M.
\end{equation*}
where $\overline{\Gamma}:=\frac{1}{2}\omega_{lk}\Gamma^k_{ij}y^ly^jdx^i$ (note that $\omega_{lk}\Gamma^k_{ij}$ is symmetric $l,\ j$ because $\nabla$ preserves the symplectic form). One extends $\partial$ to a graded derivation on $\Gamma \W \otimes \Lambda M$ using the Leibniz rule :
\begin{equation*}
\partial (a\otimes \alpha) := (\partial a) \wedge \alpha + a\otimes d\alpha.
\end{equation*}

The curvature $\partial \circ \partial$ of $\partial$ is expressed in terms
of the curvature tensor $R$ of the symplectic connection $\nabla$. 
\begin{equation*}
\partial\circ \partial a := \frac{1}{\nu}[\overline{R},a]_{\circ},
\end{equation*}
where $\overline{R}:= \frac{1}{4} \omega_{ir}R^r_{jkl}y^iy^jdx^k\wedge dx^l$.

Up to now, the connection $\partial$ is very particular as it comes from a $1$-form with values in the Lie algebra $\mathfrak{sp}$ realised as order $2$ elements in $\Gamma \W$. To make this connection flat, we will incorporate more general endomorphisms of $\Gamma \W$.  Define
\begin{equation*} \label{eq:deltadef}
\delta(a) := dx_k\wedge \partial_{y_k} a=-\frac{1}{\nu}[\omega_{ij}y^i dx^j,a]_{\circ}.
\end{equation*}
One checks that $\delta^2=0$ and $\delta\partial + \partial \delta=0$, moreover $\delta$ is a graded derivation of the $\circ$-product. We consider connection on $\Gamma \W$ of the form
\begin{equation} \label{eq:defD}
\D a:=\partial a - \delta a + \frac{1}{\nu}[r,a]_{\circ},
\end{equation}
where $r$ is a $\W$-valued $1$-form. Its curvature is given by 
\begin{equation*}
\D^2 a = \frac{1}{\nu}\left[\overline{R} + \partial r - \delta r + \frac{1}{2\nu}[r,r]_{\circ},a\right]_{\circ}.
\end{equation*}
So, the flatness of $\D$ is now an equation on the unknown $\W$-valued $1$-form $r$:
\begin{equation}\label{eq:req}
\overline{R} + \partial r - \delta r + \frac{1}{\nu}r\circ r = \Omega,
\end{equation}
using $2r\circ r=[r,r]_{\circ}$, for $\Omega \in \Omega^2(M)[[\nu]]$ being any closed formal $2$-form (which is central).

The key to solve Equation (\ref{eq:req}) is a Hodge decomposition of $\Gamma \W \otimes \Lambda(M)$. Define 
$$\delta^{-1} a_{pq}:= \frac{1}{p+q}y^ki(\partial_{x^k})a_{pq} \textrm{ if } p+q>0 \textrm{ and } \delta^{-1}a_{00}=0,$$
where $a_{pq}$ is a $q$-forms with $p$ $y$'s and $p+q>0$.
We then have the Hodge decomposition of $\Gamma \W\otimes \Lambda M$ :
\begin{equation} \label{eq:hodge}
\delta \delta^{-1}a + \delta^{-1} \delta a=a-a_{00}.
\end{equation}

\begin{theorem} 
For any given closed central $2$-form $\Omega \in \nu \Omega^2(M)[[\nu]]$,
there exists a unique solution $r \in \Gamma \W \otimes \Lambda^1 M$ of:
\begin{equation*}\label{eq:rtheor}
\overline{R} + \partial r - \delta r + \frac{1}{\nu}r\circ r = \Omega,
\end{equation*}
 with degree at least $3$ and satisfying $\delta^{-1}r=0$.
\end{theorem}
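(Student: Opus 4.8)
The plan is to turn the flatness equation \eqref{eq:req} into an equivalent fixed-point equation using the Hodge decomposition \eqref{eq:hodge}, and then to solve the latter by recursion on the $\W$-degree. First I would move the $\delta$-term to one side, writing the equation as $\delta r=\overline{R}+\partial r+\tfrac{1}{\nu}r\circ r-\Omega$. Here the term $\tfrac{1}{\nu}r\circ r$ is genuinely an element of $\Gamma\W\otimes\Lambda^{2}M$: for a $\W$-valued $1$-form one has $r\circ r=\tfrac12[r,r]_{\circ}$, and every $\circ$-commutator is divisible by $\nu$ because the even-order terms of the Moyal expansion cancel. Applying $\delta^{-1}$ and using \eqref{eq:hodge} together with $r_{00}=0$ (forced by $\W$-degree $\geq 3$) and the constraint $\delta^{-1}r=0$, which gives $\delta^{-1}\delta r=r$, I obtain the integral equation
\begin{equation*}
r=\delta^{-1}\overline{R}-\delta^{-1}\Omega+\delta^{-1}\!\left(\partial r+\tfrac{1}{\nu}\,r\circ r\right).
\end{equation*}
Since $(\delta^{-1})^{2}=0$, any solution of this equation automatically satisfies $\delta^{-1}r=0$, so the side constraint comes for free.

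Next I would solve the integral equation by induction on the $\W$-degree, writing $r=\sum_{s\geq 3}r^{(s)}$ for the homogeneous components. The key structural fact is that the right-hand side is strictly $\W$-degree-raising in $r$: the operator $\partial$ preserves $\W$-degree, $\delta^{-1}$ raises it by one, and in $\tfrac{1}{\nu}r\circ r$ a product of components of degrees $a,b\geq 3$ lands in degree $a+b-2\geq 4$. Hence the degree-$s$ component $r^{(s)}$ is determined solely by the $r^{(s')}$ with $s'<s$, with the inhomogeneous input $\delta^{-1}\overline{R}$ sitting in degree $3$ and $\delta^{-1}\Omega$ in degree $\geq 3$ (using $\Omega\in\nu\Omega^{2}(M)[[\nu]]$). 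Starting from $r^{(3)}=\delta^{-1}\overline{R}+(\text{degree-}3\text{ part of }-\delta^{-1}\Omega)$, the recursion determines every $r^{(s)}$ uniquely; this yields existence and uniqueness simultaneously, the map being a contraction for the $\W$-degree filtration.

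The main obstacle is the converse, namely that a solution of the integral equation really solves the differential equation \eqref{eq:req}; that is, that the defect $A:=\overline{R}+\partial r-\delta r+\tfrac{1}{\nu}r\circ r-\Omega$ vanishes. I would argue as follows. Substituting the integral equation shows $\delta^{-1}A=r-\delta^{-1}\delta r=0$, so \eqref{eq:hodge} (with $A_{00}=0$, as $A$ is a $2$-form) gives $A=\delta^{-1}\delta A$. Writing $W:=A+\Omega$ for the Weyl curvature, the Bianchi identity $\D W=0$ — which follows from the differential Bianchi identities $\partial\overline{R}=\delta\overline{R}=0$ for the symplectic curvature, from $\partial^{2}=\tfrac{1}{\nu}[\overline{R},\,\cdot\,]_{\circ}$, $\delta^{2}=0$, $\partial\delta+\delta\partial=0$, and the Jacobi identity — yields $\D A=0$, since $\Omega$ is closed and central so that $\D\Omega=0$. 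Unpacking $\D A=\partial A-\delta A+\tfrac{1}{\nu}[r,A]_{\circ}=0$ gives $\delta A=\partial A+\tfrac{1}{\nu}[r,A]_{\circ}$, whence $A=\delta^{-1}\delta A=\delta^{-1}\!\left(\partial A+\tfrac{1}{\nu}[r,A]_{\circ}\right)$. The right-hand side has $\W$-degree at least $\deg A+1$, so comparing lowest homogeneous components forces $A=0$. The delicate point, and the part requiring genuine computation, is verifying the Bianchi identity $\D W=0$; once that is in hand, the degree-nilpotency argument closes the proof and everything else is degree bookkeeping.
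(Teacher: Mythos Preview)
Your proposal is correct and follows the same route as the paper: rewrite the equation as the fixed-point equation $r=\delta^{-1}(\overline{R}-\Omega)+\delta^{-1}(\partial r+\tfrac{1}{\nu}r\circ r)$ via the Hodge decomposition \eqref{eq:hodge}, then solve recursively in the $\W$-degree. The paper is terser, closing with ``One checks that $r$ is indeed a solution'', whereas you supply that check in full via the Bianchi identity $\D A=0$ and the degree-nilpotency argument---this is precisely the standard Fedosov verification and fills in what the paper leaves implicit.
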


\begin{proof}
A solution $r$ with degree at least $3$ would have $r_{00}=0$. As $\delta^{-1}r=0$, the Hodge decomposition (\ref{eq:hodge}) gives the following equation for a solution $r$ of Equation (\ref{eq:req}):
\begin{equation} \label{eq:r}
r=\delta^{-1} \delta r=\delta^{-1}\left( \overline{R} - \Omega\right) + \delta^{-1} \left(\partial r+  \frac{1}{\nu}r\circ r \right).
\end{equation}
Since $\delta^{-1}$ raises the $y$ degree by $1$, this equation can be solved recursively and the solution is unique. Indeed, denoting by $r^{(k)}$ the degree $k$ component of $r$, then $\delta^{-1} \partial r^{(k)}$ has degree $k+1$ and $\delta^{-1} \left( \frac{1}{\nu}r\circ r\right)$ has degree $2k-1>k$ (when $k\geq 2$). So, starting with $r^{(3)}:= \delta^{-1}( \overline{R} - \Omega)$, one obtains $r^{(k)}$ for $k>3$ by induction. Such an $r$ is unique.

One checks that $r$ is indeed a solution of Equation (\ref{eq:req}).
\end{proof}

\begin{rem}
It is possible to incorporate connections with torsion in the Fedosov construction, see \cite{karaschlich}. In that case, the solution $r$ we are looking for will have non-zero $r^{(2)}$ that depends on the torsion.
\end{rem}

The equation (\ref{eq:r}) enables to compute $r$ recursively with respect to the $\W$-degree. However, the computation of high degree terms becomes more and more complicate and there is no nice formula available for it. We write $b^{(k)}$ the $\W$-degree $k$ component of $b\in \Gamma \W \otimes \Omega(M)$.

\begin{prop}
The solution $r$ to the Equation (\ref{eq:req}) satisfies the recursive equations :
\begin{eqnarray*}
r^{(3)} & = &\delta^{-1}\left( \overline{R} - \Omega^{(2)}\right) \\
r^{(k+3)} & = & -\delta^{-1}\left(\Omega^{(k+2)}\right) +\delta^{-1}\left(\partial r^{(k+2)}+ \frac{1}{\nu}\sum_{l=1}^{k-1} r^{(l+2)}\circ r^{(k+2-l)}\right), k\geq 1.
\end{eqnarray*}
In particular, when $\Omega=0$, one has :
\begin{eqnarray}
r^{(3)} & = & \frac{1}{8}\omega_{kr}R^{r}_{lij}y^ky^ly^i\otimes dx^j, \label{eq:r3} \\
r^{(4)} & = & \frac{1}{40} \omega\left(\partial_k, \left( \left(\nabla_p R^{\nabla}\right)(\partial_i,\partial_j)\right)\partial_l\right)y^ky^ly^iy^p\otimes dx^j. \label{eq:r4}
\end{eqnarray}
\end{prop}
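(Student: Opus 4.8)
The plan is to expand the fixed-point equation \eqref{eq:r},
$$r = \delta^{-1}\left(\overline{R} - \Omega\right) + \delta^{-1}\left(\partial r + \tfrac{1}{\nu}r\circ r\right),$$
into its homogeneous $\W$-degree components and simply read off the recursion. First I would record how each operator acts on the $\W$-grading. The product $\circ$ is \emph{graded} (it adds $\W$-degrees: each Moyal order trades a factor $\nu$ of degree $2$ for two derivatives $\partial_y$ that lower the $y$-degree by $2$), so $r^{(p)}\circ r^{(q)}$ is homogeneous of degree $p+q$; consequently $\tfrac{1}{\nu}r^{(p)}\circ r^{(q)}$ has degree $p+q-2$, and it genuinely lies in $\Gamma\W\otimes\Lambda M$ with no negative powers of $\nu$, because the order-zero Moyal term of $r\circ r$ vanishes by antisymmetry of the wedge on the form part. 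Next, $\partial = d + \tfrac{1}{\nu}[\overline{\Gamma},\cdot]_{\circ}$ preserves the $\W$-degree: $d$ acts only on coefficients, and since $\overline{\Gamma}$ has $y$-degree $2$ only the first Moyal order survives in $\tfrac{1}{\nu}[\overline{\Gamma},\cdot]_{\circ}$, which is degree-preserving. Finally $\delta^{-1}$ raises the $\W$-degree by exactly one, $\overline{R}$ is homogeneous of degree $2$, and $\Omega = \sum_{s\geq 1}\nu^s\Omega_s$ contributes the degree-$2s$ piece $\Omega^{(2s)} = \nu^s\Omega_s$.

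With this bookkeeping the recursion is immediate. Since the unique solution has degree $\geq 3$, write $r = \sum_{p\geq 3} r^{(p)}$. Extracting the degree-$3$ part of \eqref{eq:r}, the bracket $\delta^{-1}(\partial r + \tfrac{1}{\nu}r\circ r)$ contributes only in degrees $\geq 4$ (as $\partial r$ starts in degree $3$ and $\tfrac{1}{\nu}r\circ r$ in degree $4$, both raised by $\delta^{-1}$), so $r^{(3)} = \delta^{-1}(\overline{R} - \Omega^{(2)})$. For the degree-$(k+3)$ part with $k\geq 1$, the term $\delta^{-1}(\overline{R}-\Omega)$ contributes only $-\delta^{-1}(\Omega^{(k+2)})$ (the $\overline{R}$ piece lives solely in degree $3$), the term $\delta^{-1}\partial r$ contributes $\delta^{-1}(\partial r^{(k+2)})$, and $\delta^{-1}\big(\tfrac{1}{\nu}r\circ r\big)$ collects those $\tfrac{1}{\nu}r^{(p)}\circ r^{(q)}$ with $p+q-2 = k+2$, that is $p = l+2$ and $q = k+2-l$ with $p,q\geq 3$, which forces $1\leq l\leq k-1$. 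Summing these yields exactly the stated recursion.

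For the closed case $\Omega = 0$ the two lowest orders are then computed explicitly. Formula \eqref{eq:r3} follows by a direct application of $\delta^{-1}$ to $\overline{R} = \tfrac14\omega_{ir}R^r_{jkl}y^iy^j\,dx^k\wedge dx^l$: here $\delta^{-1} = \tfrac14\,y^s\,i(\partial_{x^s})$ on this $(p,q)=(2,2)$ component, and using the antisymmetry $R^r_{jkl} = -R^r_{jlk}$ the two resulting terms coincide, giving the coefficient $\tfrac18$ after relabelling. The degree-$4$ term is $r^{(4)} = \delta^{-1}(\partial r^{(3)})$, the $\circ$-sum being empty for $k=1$. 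The main obstacle is precisely this last computation: one must expand $\partial r^{(3)} = d\,r^{(3)} + \tfrac{1}{\nu}[\overline{\Gamma},r^{(3)}]_{\circ}$ and check that the ordinary derivative of the curvature coefficients combines with the Christoffel commutator into the covariant derivative $\nabla_p R^{\nabla}$, then apply $\delta^{-1}$ (now with factor $\tfrac15$ on the $(p,q)=(3,2)$ component) and invoke the Bianchi symmetries to reorganize the symmetrized $y$-monomials into the single clean term \eqref{eq:r4}; the coefficient $\tfrac{1}{40} = \tfrac15\cdot\tfrac18$ serves as a useful consistency check. Everything else is degree bookkeeping.
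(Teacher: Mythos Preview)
Your proposal is correct and follows exactly the approach implicit in the paper: the paper does not give a separate proof of this Proposition, treating the recursion as an immediate consequence of the fixed-point equation \eqref{eq:r} together with the degree bookkeeping already spelled out in the proof of the preceding existence theorem. Your argument makes that bookkeeping explicit and adds a few welcome details (why $\tfrac{1}{\nu}\,r\circ r$ carries no negative powers of $\nu$, and the origin of the coefficients $\tfrac18$ and $\tfrac{1}{40}$).
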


Consider a flat connection $\D$ of the form (\ref{eq:defD}) it is a graded derivation of $\Gamma\W \otimes \Lambda M$. Then, $\Gamma \W_{\D} := \{a\in \Gamma \W\ |\ \D a=0\}$ is an algebra for the $\circ$-product called \emph{the algebra of flat sections}. Define the symbol map $\sigma :a\in \Gamma \W_{\D} \mapsto a_{00}\in C^{\infty}(M)[[\nu]]$. 
\begin{theorem} [Fedosov \cite{fed2}] 
The symbol map $\sigma$ is a bijection on flat sections with inverse $Q:C^{\infty}(M)[[\nu]]\rightarrow \Gamma \W_{\D}$.\\
Any $a \in \Gamma \W_{\D} $ is the unique solution to the equation:
\begin{equation*}
a=a_{00}+ \delta^{-1}\left(\partial a + \frac{1}{\nu}[r,a]_{\circ}\right).
\end{equation*}
For any $F\in C^{\infty}(M)[[\nu]]$, denote by $b^{(k)}$ the $\W$-degree $k$ component of $b\in \Gamma \W\otimes \Lambda(M)$, then:
\begin{eqnarray}
Q(F)^{(0)} & = & F \nonumber \\
Q(F)^{(k+1)} & = & \delta^{-1} \left(\partial Q(F)^{(k)} + \frac{1}{\nu} \sum_{l=1}^{k-1} [r^{(l+2)}, Q(F)^{(k-l)}]_{\circ}\right).\label{eq:QFrecu}
\end{eqnarray}
The $\circ$-product on $\Gamma \W_D$ induces a star product $*_{\nabla,\Omega}$ on $(M,\omega)$ by
\begin{equation*}
F*_{\nabla,\Omega}G:=\sigma(QF \circ QG) \textrm{ for } F,G \in C^{\infty}(M)[[\nu]].
\end{equation*}
\end{theorem}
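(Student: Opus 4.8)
The plan is to recast the flatness condition $\D a = 0$ as a fixed-point equation via the Hodge decomposition \eqref{eq:hodge}, to solve that equation by induction on the $\W$-degree, and finally to transport the fiberwise product $\circ$ through the resulting isomorphism. First I would rewrite $\D a = 0$ as $\delta a = \partial a + \frac{1}{\nu}[r,a]_\circ$. For a genuine section $a \in \Gamma\W$, which carries no form part, the operator $\delta^{-1}$ annihilates $a$ (there is no $dx^k$ to contract against), so $\delta\delta^{-1}a = 0$ and \eqref{eq:hodge} collapses to $\delta^{-1}\delta a = a - a_{00}$. Substituting the flatness relation for $\delta a$ yields precisely
\begin{equation*}
a = a_{00} + \delta^{-1}\Big(\partial a + \tfrac{1}{\nu}[r,a]_\circ\Big),
\end{equation*}
which is the asserted characterization.

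Next I would solve this equation recursively. The mechanism is that $\delta^{-1}$ raises $\W$-degree by exactly one, $\partial$ preserves it, and $r$ has $\W$-degree at least $3$, so that $\frac{1}{\nu}[r,\,\cdot\,]_\circ$ raises $\W$-degree by $\deg r - 2 \geq 1$. Consequently the degree-$(k+1)$ part of the right-hand side involves only the already-determined components $a^{(0)},\dots,a^{(k)}$. Prescribing $a_{00} = F$ thus determines each homogeneous component uniquely, and extracting the $\W$-degree $(k+1)$ piece produces exactly the recursion \eqref{eq:QFrecu} (the term pairing $r$ with the $y$-free component $a_{00}$ drops out, since a $y$-free section $\circ$-commutes with all of $\Gamma\W$). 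This defines $Q$, shows $\sigma \circ Q = \mathrm{id}$, and---since any flat section is the unique solution of the fixed-point equation carrying its own symbol---shows that $\sigma$ is injective on $\Gamma\W_\D$.

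The delicate step, and the one I expect to be the main obstacle, is the converse: that the section $a = Q(F)$ manufactured by the recursion is genuinely $\D$-flat and not merely a fixed point. Here I would invoke flatness of the Fedosov connection, $\D^2 = 0$, which holds because $r$ solves the curvature equation \eqref{eq:req}. Put $c := \D a$; then $\D c = \D^2 a = 0$, giving $\delta c = \partial c + \frac{1}{\nu}[r,c]_\circ$, while the fixed-point equation together with $(\delta^{-1})^2 = 0$ forces $\delta^{-1}c = 0$. Feeding these into \eqref{eq:hodge} and using that $c$ is a $1$-form (so $c_{00} = 0$) reduces everything to the homogeneous equation $c = \delta^{-1}\big(\partial c + \frac{1}{\nu}[r,c]_\circ\big)$. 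Its right-hand side lies in the image of $\delta^{-1}$ (hence vanishes in $\W$-degree $0$) and in each degree depends only on strictly lower-degree components of $c$; induction on $\W$-degree then forces $c = 0$, i.e. $\D a = 0$.

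Combining the two directions gives $Q \circ \sigma = \mathrm{id}$ as well, so $\sigma : \Gamma\W_\D \to C^\infty(M)[[\nu]]$ is a linear bijection with inverse $Q$. Finally, because $\D$ is a graded derivation of $\circ$, the flat sections form a subalgebra: $\D(QF \circ QG) = (\D QF)\circ QG + QF\circ(\D QG) = 0$, so $QF \circ QG$ is flat and $F *_{\nabla,\Omega} G := \sigma(QF\circ QG)$ is well defined. Associativity of $*_{\nabla,\Omega}$ is inherited from that of $\circ$ through the isomorphism $\sigma$, and the remaining star-product axioms follow by computing $Q(F) = F + y^k\partial_{x^k}F + O(\text{degree }2)$ and reading off the low-order symbol of $QF\circ QG$, which reproduces $C_0(F,G) = FG$ and $C_1^-(F,G) = \{F,G\}$ from the leading Moyal terms.
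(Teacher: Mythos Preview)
The paper does not give its own proof of this theorem; it is stated as a result of Fedosov with a citation to \cite{fed2}, and the surrounding text only computes the first few terms of $Q(F)$ explicitly. Your argument is the standard Fedosov proof and it is correct. It also dovetails with the paper's proof of the preceding theorem on the existence of $r$: the same Hodge decomposition \eqref{eq:hodge} turns a differential equation into a fixed-point equation that is solved by induction on $\W$-degree, and then one must separately check that the recursive solution actually satisfies the original equation.

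Two small comments. First, your justification that $\delta^{-1}c=0$ is compressed: writing $X=\partial a+\frac1\nu[r,a]_\circ$, the fixed-point equation gives $a-a_{00}=\delta^{-1}X$, hence $\delta a=\delta\delta^{-1}X$, and therefore $c=X-\delta\delta^{-1}X$; then $\delta^{-1}c=\delta^{-1}X-\delta^{-1}\delta\delta^{-1}X$, and applying Hodge to $\delta^{-1}X$ (a $0$-form with vanishing $(0,0)$-part) together with $(\delta^{-1})^2=0$ yields $\delta^{-1}\delta\delta^{-1}X=\delta^{-1}X$, so $\delta^{-1}c=0$. This is presumably what you had in mind, but it is worth spelling out. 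Second, in the recursion the upper index should really allow $l=k-1$ only when $k\ge 2$; for $k=0$ and $k=1$ the bracket sum is empty, which is consistent with your remark that $[r,a_{00}]_\circ=0$ and with the paper's explicit formulae $Q(F)^{(1)}=\partial_kF\,y^k$, $Q(F)^{(2)}=\tfrac12\nabla^2_{(l,k)}F\,y^ky^l$.
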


\begin{rem}
Note that the map $Q$ depends on $\nabla$ and the series $\Omega$, through $\partial$ and $r$ .
\end{rem}

We will now give the star product $*_{\nabla,0}$ up to order $3$ in $\nu$ to show it is indeed a prolongation of the truncated star product defined by Equation (\ref{eq:fedtrunc}). In the following proposition, we give the first order terms of $Q(F)$ up to order $3$. For terms of higher degree, we only give the part that contribute to the order $\leq3$ terms of $*_{\nabla,0}$.

\begin{prop}
For $\nabla$ a symplectic connection, and for the trivial choice of closed formal $2$-form $\Omega=0$, one has:
\begin{equation*}
Q(F)^{(0)}  = F, \ \ \ 
Q(F)^{(1)}  =  \partial_k F y^k,\ \ \ 
Q(F)^{(2)}  =  \frac{1}{2}\nabla^2_{(l,k)}F y^ky^l,
\end{equation*}
\begin{equation*}\label{eq:QF3}
Q(F)^{(3)}  =  \left(\frac{1}{6} \underline{\Lr_{X_F}\nabla}_{pkl} - \frac{1}{8}\omega\left(R^{\nabla}(X_F,\partial_p)\partial_k,\partial_l\right)\right)y^ky^py^l. 
\end{equation*}
Writing $\simeq$ for equality modulo terms that will not contribute to the order $\leq3$ terms of $*_{\nabla,0}$, we get:
\begin{eqnarray}
Q(F)^{(4)} & \simeq & 0 \nonumber \\
Q(F)^{(5)} & \simeq & \frac{\nu^2}{3!.48}\Lambda^{kt}\Lambda^{lu}\Lambda^{iv}\left(\stackrel{\curvearrowright}{\oplus}_{kli}\omega_{kr}(R^{\nabla})^r_{lij}  \right)\left(\stackrel{\curvearrowright}{\oplus}_{tuv} \underline{\Lr_{X_F}\nabla}_{tuv}\right)y^j \nonumber\\
 & & - \frac{\nu^2}{3!.2.64} \Lambda^{kt}\Lambda^{lu}\Lambda^{iv} \left(\stackrel{\curvearrowright}{\oplus}_{kli}\omega_{kr}(R^{\nabla})^r_{lij}  \right)\left(\stackrel{\curvearrowright}{\oplus}_{tuv}\omega\left(R^{\nabla}(X_F,\partial_t)\partial_u,\partial_v\right)\right)y^j. \label{eq:QF5}
\end{eqnarray} 
\end{prop}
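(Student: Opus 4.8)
\noindent The plan is to run the Fedosov recursion \eqref{eq:QFrecu} degree by degree, feeding in the explicit one-forms $r^{(3)}$ and $r^{(4)}$ of \eqref{eq:r3}--\eqref{eq:r4} and using that the lifted connection $\partial = d + \frac{1}{\nu}[\overline{\Gamma},\cdot]_{\circ}$ acts on a $y$-independent function as the exterior derivative $d$. For the first three degrees the commutator sum in \eqref{eq:QFrecu} contributes nothing: for $Q(F)^{(1)}$ and $Q(F)^{(2)}$ the index $l$ runs over an empty range, while for $Q(F)^{(3)}$ the only term is $\frac{1}{\nu}[r^{(3)},Q(F)^{(0)}]_{\circ}=\frac{1}{\nu}[r^{(3)},F]_{\circ}$, which vanishes since $F$ carries no $y$. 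Thus $Q(F)^{(1)}=\delta^{-1}dF=\partial_kF\,y^k$, and $Q(F)^{(2)}=\delta^{-1}\partial Q(F)^{(1)}$, where the Christoffel part of $\partial$ (the Moyal commutator with $\overline{\Gamma}$) combines with $d$ to promote the ordinary second derivative to the covariant Hessian $\frac{1}{2}\nabla^2_{(l,k)}F\,y^ky^l$.

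For $Q(F)^{(3)}=\delta^{-1}\partial Q(F)^{(2)}$ the curvature enters for the first time: since covariant derivatives do not commute ($\partial^2 a=\frac{1}{\nu}[\overline{R},a]_{\circ}$), the symmetrization in the $y$-indices forced by $\delta^{-1}$ turns the iterated derivative of $F$ into a symmetrized third covariant derivative together with a curvature correction. I would then use the moment-map identity \eqref{eq:Liedernabla} to rewrite the symmetric third-derivative part through $\underline{\Lr_{X_F}\nabla}$ and collect all curvature contributions; this produces exactly the coefficients $\frac{1}{6}$ in front of $\underline{\Lr_{X_F}\nabla}_{pkl}$ and $-\frac{1}{8}$ in front of $\omega(R^{\nabla}(X_F,\partial_p)\partial_k,\partial_l)$.

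Degrees $4$ and $5$ I would settle by a $\nu$-weight count rather than a brute-force computation. The point is that $\delta^{-1}$ and $\partial$ preserve the power of $\nu$, and $\frac{1}{\nu}$ times a single Moyal contraction is again of $\nu$-weight zero; a factor $\nu^2$ can only be produced by a triple (odd $\geq 3$) contraction, which in turn requires both entries of the bracket to carry at least three $y$'s. None of the inputs feeding $Q(F)^{(4)}$ (namely $\partial Q(F)^{(3)}$, $[r^{(3)},Q(F)^{(2)}]_{\circ}$ and $[r^{(4)},Q(F)^{(1)}]_{\circ}$) admits such a contraction, so $Q(F)^{(4)}$ is purely of type $\nu^0y^4$; a term of this shape can only be saturated in $\sigma(Q(F)\circ Q(G))$ at order $\geq 4$, whence $Q(F)^{(4)}\simeq 0$. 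Consequently the first $\nu^2$-term of $Q(F)$ appears at degree $5$, through the triple contraction inside $\frac{1}{\nu}[r^{(3)},Q(F)^{(3)}]_{\circ}$, where both factors carry exactly three $y$'s; applying $\delta^{-1}$ converts the resulting $\nu^2y^0$ one-form into the displayed $\nu^2y^1$ expression, the three $\Lambda^{kt}\Lambda^{lu}\Lambda^{iv}$ recording the contraction and the cyclic sums $\stackrel{\curvearrowright}{\oplus}$ recording the symmetrizations of $r^{(3)}$ and of the two pieces of $Q(F)^{(3)}$.

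The hard part is precisely this last bookkeeping. One must verify that the enumerated list of surviving pieces is complete, i.e. pin down exactly which $(\nu\text{-power},\,y\text{-degree})$ components of each $Q(F)^{(k)}$ genuinely feed into the order $\leq 3$ part of $\sigma(Q(F)\circ Q(G))$ after being saturated against the matching components of $Q(G)$, and then carry the symmetric/antisymmetric index juggling through $\delta^{-1}$ and the triple Moyal contraction so as to land on the stated normalizations (the constants $\frac{1}{3!\cdot 48}$ and $\frac{1}{3!\cdot 2\cdot 64}$ arise by combining the $\frac{1}{3!}(\frac{\nu}{2})^3$ of the triple Moyal contraction with the weights $\frac{1}{8}$, $\frac{1}{6}$ and $\frac{1}{8}$ of $r^{(3)}$ and of the two terms of $Q(F)^{(3)}$, together with the symmetrization factors). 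The steps are individually routine but collectively error-prone, and this is where essentially all of the effort lies.
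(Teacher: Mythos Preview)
Your outline follows the same recursive strategy as the paper, but there are two concrete mistakes in the bookkeeping that would lead to wrong answers.

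\textbf{The $Q(F)^{(3)}$ step.} In the recursion \eqref{eq:QFrecu} with $k=2$ the sum has the single term $l=1$, which is $\frac{1}{\nu}[r^{(3)},Q(F)^{(k-l)}]_\circ=\frac{1}{\nu}[r^{(3)},Q(F)^{(1)}]_\circ$, not $[r^{(3)},Q(F)^{(0)}]_\circ$ as you wrote. Since $Q(F)^{(1)}=\partial_kF\,y^k$ does carry a $y$, this commutator does \emph{not} vanish; it is precisely the source of the curvature piece $-\frac{1}{24}\,\omega(R^\nabla(X_F,\partial_p)\partial_k,\partial_l)\,y^ky^py^l$. Your attempt to extract the curvature instead from $\delta^{-1}\partial Q(F)^{(2)}$ via the relation $\partial^2 a=\frac{1}{\nu}[\overline R,a]_\circ$ does not work: in the recursion one applies $\delta^{-1}\partial$ once per step, so no bare $\partial^2$ ever acts, and $\delta^{-1}\partial Q(F)^{(2)}$ is simply the symmetrized third covariant derivative $\frac{1}{6}\,\omega(\nabla^2_{pk}X_F,\partial_l)\,y^ky^py^l$ with no additional curvature correction. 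The paper then rewrites the sum of these two pieces using \eqref{eq:Liedernabla}.

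\textbf{The $Q(F)^{(5)}$ step.} You identify only $\frac{1}{\nu}[r^{(3)},Q(F)^{(3)}]_\circ$ as a source of the $\nu^2 y^1$ contribution, but there is a second source you have overlooked: $\frac{1}{\nu}[r^{(5)},Q(F)^{(1)}]_\circ$. The one-form $r^{(5)}$ contains a $\nu^2 y^1$ component coming from the triple Moyal contraction inside $\delta^{-1}\!\bigl(\frac{1}{\nu}\,r^{(3)}\circ r^{(3)}\bigr)=\delta^{-1}\!\bigl(\frac{1}{\nu}\,\delta^{-1}\overline R\circ\delta^{-1}\overline R\bigr)$, and its commutator with $Q(F)^{(1)}=\partial_kF\,y^k$ produces, after a further $\delta^{-1}$, another $\nu^2 y^1$ term in $Q(F)^{(5)}$. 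The paper explicitly names both contributions. Without the $r^{(5)}$ piece your coefficient count for the second line of \eqref{eq:QF5} would come out wrong, so the constants you sketch at the end cannot be obtained from $[r^{(3)},Q(F)^{(3)}]$ alone.
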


\begin{proof}
The terms in $Q(F)$ that will contribute to $*_{\nabla,0}$ up to order three, must be of the form $y^k,y^ky^l,y^ky^ly^p,\nu y^k,\nu y^ky^l$ or $\nu^2 y^k$.

$Q(F)^{(1)}$ and $Q(F)^{(2)}$ are obtained by successive application of $\delta^{-1} \partial$, they don't involve $r$ for degree reason.

For $Q(F)^{(3)}$, the first term in Equation (\ref{eq:QFrecu}) gives $\delta^{-1} \left(\partial Q(F)^{(2)}\right)=\frac{1}{6}\omega\left(\nabla^2_{pk} X_F,\partial_l\right)y^ky^py^l$ and the second one is $\delta^{-1} \left(\frac{1}{\nu}  [r^{(3)}, Q(F)^{(1)}]_{\circ}\right)=-\frac{1}{24}\omega\left(R^{\nabla}(X_F,\partial_p)\partial_k,\partial_l\right)y^ky^py^l$. Using Equation (\ref{eq:Liedernabla}), that describes the Lie derivative of $\nabla$ one obtains the desired expression of $Q(F)^{(3)}$.

Let us analyse the three terms of Equation (\ref{eq:QFrecu}) for $Q(F)^{(4)}$. First, as $\partial$ preserves the degree in $y$, $\delta^{-1}(\partial Q(F)^{(3)})$ is of degree $4$ in $y$ and hence will not contribute. Also, in Equation (\ref{eq:r3}) and (\ref{eq:r4}), we see that $r^{(3)}$, resp. $r^{(4)}$, are of degree $3$, resp. $4$ in $y$. Then, the terms $\delta^{-1}\left(\frac{1}{\nu}[r^{(3)},Q(F)^{(2)}]_{\circ}\right)$ and $\delta^{-1}\left(\frac{1}{\nu}[r^{(4)},Q(F)^{(1)}]_{\circ}\right)$ are both of degree $4$ in $y$ and hence will not contribute.

Inside $Q(F)^{(5)}$, the only terms that will contribute to $*_{\nabla,0}$ up to order $3$ in $\nu$ comes from terms in $\nu^2y^j$. Such terms appear in $\delta^{-1}\left(\frac{1}{\nu}[r^{(3)},Q(F)^{(3)}]_{\circ}\right)$ and $\delta^{-1}\left(\frac{1}{\nu}[r^{(5)},Q(F)^{(1)}]_{\circ}\right)$ and will give Equation (\ref{eq:QF5}). Note that the only term in $r^{(5)}$ that is contributing is the term in $\nu^2y^j$ inside $\delta^{-1}\left(\delta^{-1}(R)\circ\delta^{-1}(R)\right)$.
\end{proof}

\begin{prop}
Modulo terms of order greater or equal than $4$ in $\nu$, the star product $*_{\nabla,0}$ of $F,G \in C^{\infty}(M)$ is given by :
\begin{eqnarray*} \label{eq:*Fleq4}
F*_{\nabla,0}G & = & FG+\frac{\nu}{2}\{F,G\}+\frac{\nu^2}{8} \Lambda^{i_1j_1} \Lambda^{i_2j_2} \nabla^2_{i_1i_2} F\nabla^2_{j_1j_2} G\\
& &+ \frac{\nu^3}{48}\Lambda^{i_1j_1} \Lambda^{i_2j_2} \Lambda^{i_3j_3}\underline{\Lr_{X_F}\nabla}_{i_1i_2i_3}\underline{\Lr_{X_G}\nabla}_{j_1j_2j_3}+O(\nu^4)
\end{eqnarray*}
\end{prop}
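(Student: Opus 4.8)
The plan is to compute $F*_{\nabla,0}G=\sigma(QF\circ QG)$ directly from the fiberwise Moyal product, organizing everything by the total power of $\nu$. Writing $\circ$ as its expansion in the number $r$ of $\partial_y$-contractions, each carrying a factor $(\tfrac{\nu}{2})^r$, and recalling that $\sigma$ extracts the fiber-part of $y$-degree $0$, the key observation is that the $r$-fold contraction only sees the $y$-homogeneous-degree-$r$ components of $QF$ and $QG$. Since every $Q(F)^{(k)}$ with $k\ge1$ lies in the image of $\delta^{-1}$ and hence has $y$-degree $\ge1$, the $y$-degree-$0$ part of $QF$ is exactly $F$, which already pins the $r=0$ term to $FG$.

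For the lower orders I would feed in the components from the previous proposition, tracking which pairs $(r,\text{explicit }\nu\text{-powers})$ sum to the desired order. At order $\nu^1$ only the single contraction $r=1$ of $Q(F)^{(1)}=\partial_kF\,y^k$ with $Q(G)^{(1)}$ survives, giving $\tfrac{\nu}{2}\Lambda^{ij}\partial_iF\,\partial_jG=\tfrac{\nu}{2}\{F,G\}$; at order $\nu^2$ only the double contraction $r=2$ of $Q(F)^{(2)}=\tfrac12\nabla^2_{(l,k)}F\,y^ky^l$ with $Q(G)^{(2)}$ survives, and torsion-freeness identifies $\partial^2_y$ of the Hessian monomial with $\nabla^2_{i_1i_2}F$, producing exactly $\tfrac{\nu^2}{8}\Lambda^{i_1j_1}\Lambda^{i_2j_2}\nabla^2_{i_1i_2}F\,\nabla^2_{j_1j_2}G$. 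The bookkeeping point is that $Q(F)^{(4)}\simeq0$ contributes no $\nu\,y^2$ piece and $(QF)_{[0]}=F$ has no higher-$\nu$ tail, so no stray terms leak into orders $\le2$.

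The substance is at order $\nu^3$. A short degree count shows that exactly two contractions survive: the triple contraction $r=3$ of $Q(F)^{(3)}$ with $Q(G)^{(3)}$, and the single contraction $r=1$ pairing the $\nu^2y$-part of $Q(F)^{(5)}$ from \eqref{eq:QF5} with $Q(G)^{(1)}$, together with its $F\leftrightarrow G$ image. In the triple contraction the two factors of $3!$ from differentiating the cubic monomials combine with the prefactor $\tfrac{1}{3!}(\tfrac{\nu}{2})^3$, so that the purely $\underline{\Lr_{X_F}\nabla}\cdot\underline{\Lr_{X_G}\nabla}$ part of the coefficients of $Q(F)^{(3)},Q(G)^{(3)}$ already reproduces the asserted term $\tfrac{\nu^3}{48}\Lambda^{i_1j_1}\Lambda^{i_2j_2}\Lambda^{i_3j_3}\underline{\Lr_{X_F}\nabla}_{i_1i_2i_3}\underline{\Lr_{X_G}\nabla}_{j_1j_2j_3}$ with the correct constant $\tfrac{1}{48}$.

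The whole content therefore reduces to showing that everything else cancels, and this is where the real work lies. Concretely, the cross and quadratic curvature pieces $\omega(R^{\nabla}(X_\bullet,\partial_p)\partial_k,\partial_l)$ carried by $Q(F)^{(3)},Q(G)^{(3)}$ in the $r=3$ term, together with the entire $r=1$ contribution built from $Q(F)^{(5)}$, must sum to zero. I would handle this by decomposing $\underline{\Lr_{X_\bullet}\nabla}$ via \eqref{eq:Liedernabla} into its Hessian part $\omega(\nabla^2X_\bullet,\cdot)$ and its curvature part $\omega(R^{\nabla}(X_\bullet,\cdot)\cdot,\cdot)$, and then invoking the algebraic symmetries of the symplectic curvature tensor: antisymmetry in the two form slots, symmetry in the two endomorphism slots (since $\nabla\omega=0$ gives $\omega(R^{\nabla}(X,Y)Z,W)=\omega(R^{\nabla}(X,Y)W,Z)$), and the first Bianchi identity coming from $T^{\nabla}=0$. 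Matched against the explicit cyclic sums $\stackrel{\curvearrowright}{\oplus}$ in \eqref{eq:QF5}, these symmetries force the curvature contributions to telescope to zero after contraction with the totally symmetric $\Lambda^{i_1j_1}\Lambda^{i_2j_2}\Lambda^{i_3j_3}$; the exact matching of the constants $\tfrac{1}{3!\cdot48}$, $\tfrac{1}{3!\cdot2\cdot64}$ and $\tfrac18$ is precisely what makes the cancellation work. Verifying this cancellation, rather than extracting the leading term, is the main obstacle.
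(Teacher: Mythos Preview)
Your approach is correct and is in fact more explicit than what the paper does: the paper states this proposition without proof, treating it as the outcome of the preceding proposition (where the relevant pieces $Q(F)^{(0)},\ldots,Q(F)^{(5)}$ were computed precisely so as to feed into this one) and then pointing to Bordemann \cite{bordemann} for the general formula. Your degree-counting is right---in particular your observation that $\sigma(QF)=F$ forces the $y$-degree-$0$ part of $QF$ to be exactly $F$ with no $\nu$-tail is the clean way to rule out stray $r=0$ contributions, and your enumeration of the surviving $(r,a,b)$ triples at order $\nu^3$ agrees with what the paper's $\simeq$-truncation in the previous proposition is implicitly encoding.

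Two small remarks. First, calling $\Lambda^{i_1j_1}\Lambda^{i_2j_2}\Lambda^{i_3j_3}$ ``totally symmetric'' is inaccurate: each $\Lambda$ is antisymmetric, so the product is only invariant under permutations of the three \emph{pairs} $(i_k,j_k)$ and picks up a sign under $i_k\leftrightarrow j_k$. What you actually use is the symmetry in the $i$-indices (and separately the $j$-indices) after relabeling pairs, which is fine. Second, the cancellation you sketch is genuine but not quite a ``telescoping'': one has to match the $AB$, $BA$, $BB$ pieces of the $r=3$ contraction against the two $r=1$ contributions from \eqref{eq:QF5}, and the Bianchi identity together with the symmetry $\omega(R^\nabla(X,Y)Z,W)=\omega(R^\nabla(X,Y)W,Z)$ are exactly the tools, but the actual bookkeeping of the constants $\tfrac{1}{8}$, $\tfrac{1}{3!\cdot 48}$, $\tfrac{1}{3!\cdot 2\cdot 64}$ requires writing everything out. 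You have correctly identified this as the only nontrivial step; the paper simply omits it.
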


\begin{rem}
The general formula for a Fedosov star product up to order $3$ in $\nu$ can be found in  Section 4.5 of \cite{bordemann}.
\end{rem}

\noindent Proposition \ref{prop:trunctrace}, can then be restated in terms of the Fedosov star product $*_{\nabla,0}$.

\begin{theorem} [\cite{fed4},\cite{LLF}]\label{Fedosov density}
Let $\nabla$ be a symplectic connection. A trace density $\rho^{\nabla}$ for the Fedosov star product $*_{\nabla,0}$ is given by :
\begin{equation} \label{eq:tracedensfed}
\rho^{\nabla}:=1+\frac{\nu^2}{24} \mu(\nabla) + O(\nu^3)
\end{equation}

\noindent In particular, if $*_{\nabla,0}$ is closed, then $\nabla$ is a solution to the equation 
$$\mu(\nabla)=C, \textrm{ for } C\in \R.$$
\end{theorem}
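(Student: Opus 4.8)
The plan is to read off Theorem \ref{Fedosov density} from the truncated computation in Proposition \ref{prop:trunctrace}, using that the genuine Fedosov product $*_{\nabla,0}$ and the truncated product $*_{\nabla}^{3}$ share the same bidifferential operators up to order $3$ in $\nu$. First I would invoke the trace existence and uniqueness theorem (Fedosov, Nest--Tsygan, Gutt--Rawnsley) to write the trace of $*_{\nabla,0}$ as $\tr(F)=\frac{1}{\nu^m}\int_M F\rho^{\nabla}\dvol$ with $\rho^{\nabla}=\sum_{r\geq 0}\nu^{r}\rho_r$, the lowest coefficient $\rho_0$ being a constant. The coefficients $\rho_0,\rho_1,\rho_2$ are then governed by the trace equations \eqref{eq:eqtrace} for $k=0,1,2$.

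The crucial observation is that \eqref{eq:eqtrace} is triangular: its $k$-th instance expresses $\tau_k(\{F,G\})$ through $\tau_0,\dots,\tau_{k-1}$ and the antisymmetrizations $C_2^{-},\dots,C_{k+1}^{-}$. Hence $\tau_0,\tau_1,\tau_2$ depend only on $C_1^{-},C_2^{-},C_3^{-}$. Since the preceding proposition gives $*_{\nabla,0}=*_{\nabla}^{3}+O(\nu^4)$, these three antisymmetrizations coincide with those of $*_{\nabla}^{3}$, so the higher operators $C_4,C_5,\dots$ of the full Fedosov product cannot disturb the order-$\leq 2$ part of the density. I would then solve the three equations exactly as in Proposition \ref{prop:trunctrace}: the case $k=0$ forces $\tau_0$ to be integration, normalized so that $\rho_0=1$; the case $k=1$ forces $\tau_1$ to be a multiple of integration, removed by the formal-constant normalization so $\rho_1=0$; and the case $k=2$, after identifying $\int_M S^3_\nabla(F,G)\dvol$ with the symplectic pairing $\Omega^{\E}_{\nabla}(\Lr_{X_F}\nabla,\Lr_{X_G}\nabla)$ and invoking the Cahen--Gutt moment-map identity \eqref{eq:momentmu}, reproduces that computation and yields $\rho_2=\frac{1}{24}\mu(\nabla)$. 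Together these give \eqref{eq:tracedensfed}.

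For the second assertion I would unwind the definition of closedness. By definition $*_{\nabla,0}$ is closed iff some trace density is a formal constant, i.e.\ lies in $\R[\nu^{-1},\nu]]$. Since the trace density is unique up to multiplication by a nonzero element of the field $\R[\nu^{-1},\nu]]$ of formal Laurent series, and $\rho^{\nabla}$ is one such density, closedness forces $\rho^{\nabla}$ itself to be a formal constant. Reading off its order-$\nu^2$ coefficient from \eqref{eq:tracedensfed}, the function $\frac{1}{24}\mu(\nabla)$ must then be constant on $M$, that is $\mu(\nabla)=C$ for some $C\in\R$.

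The main obstacle does not lie in the present theorem, which is essentially formal once the inputs are granted, but in guaranteeing that the truncation is legitimate: the whole argument rests on the triangular structure of \eqref{eq:eqtrace} ensuring that the order-$\leq 2$ density sees only $C_1^{-},C_2^{-},C_3^{-}$, combined with the preceding order-$3$ Fedosov computation identifying these with the operators of $*_{\nabla}^{3}$ and with the Cahen--Gutt identification of $\int_M S^3_\nabla$ as the moment map $\mu$. Were one to prove the statement from scratch, the genuinely hard steps would be that order-$3$ Fedosov recursion (the determination of $r$ and of the $Q(F)^{(k)}$) and the verification of the moment-map equation \eqref{eq:momentmu}, both of which I take here as established.
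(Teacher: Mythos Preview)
Your proposal is correct and follows essentially the same approach as the paper: the paper simply remarks, immediately before Theorem~\ref{Fedosov density}, that ``Proposition~\ref{prop:trunctrace} can then be restated in terms of the Fedosov star product $*_{\nabla,0}$,'' having just shown that $*_{\nabla,0}$ agrees with $*_{\nabla}^{3}$ modulo $O(\nu^4)$. Your account is in fact more explicit than the paper's, since you spell out why the triangular structure of~\eqref{eq:eqtrace} guarantees that only $C_1^{-},C_2^{-},C_3^{-}$ enter the determination of $\rho_0,\rho_1,\rho_2$; the paper leaves this implicit.
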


\begin{rem} \label{rk:tracefed}
In \cite{fed4}, Fedosov obtain a recursive formula to compute the trace density of a Fedosov star product. As an example of his procedure, he already obtained the Equation (\ref{eq:tracedensfed}), without using the moment map equation for $\mu$.
\end{rem}

\begin{rem} \label{rk:closedandmu}
Although we have seen in Section \ref{subsect:defandgeneral} that star products are closed up to equivalence, it does \emph{not} mean that $\mu$ can always be made constant. Given $*_{\nabla,0}$, it means there exists a closed star product $*$ equivalent to $*_{\nabla,0}$ but $*$ is not necessarily of the form $*_{\nabla',0}$ for some $\nabla'\in \E(M,\omega)$. See Section \ref{sect:CGmomentclosed} for obstructions to the constancy of $\mu$ in the K\"ahler settings.
\end{rem}





\subsection{The Berezin-Toeplitz star product} \label{sect:BTstar}

We work with a closed K\"ahler manifolds $(M,\omega,J)$, with K\"ahler metric $g(\cdot,\cdot):=\omega(\cdot,J\cdot)$, admitting a pre-quantum line bundle $(L,h,\nabla^L)$, that is $L\rightarrow M$ is a holomorphic line bundle with hermitian metric $h$, and Chern connection $\nabla^L$ such that $R^{\nabla^L}=-2\pi i \omega$. Let us recall basic definitions in K\"ahler geometry. For a K\"ahler metric $g = (g_{i{\barj}})$ on a compact K\"ahler manifold $M$ the  Ricci curvature is given by
$$ R_{i{\barj}} = - \frac{\partial^2}{\partial z^i \partial {\overline z}^j} \log \det g.$$
Its trace 
\begin{equation*}\label{scalar curvature}
S = g^{i\barj}R_{i\barj}
\end{equation*}
is called the scalar curvature.

We consider tensor powers $L^k:=L^{\otimes k}$ equipped with induced Hermitian metric $h^k$ and induced connection $\nabla^{L^k}$. The space $\Gamma(M,L^k)$ of smooth sections of $L^k$ is equipped with natural inner product induced by $h^k$. Denote by $L^2(M,L^k)$ the space of $L^2$ sections of $L^k$ and by $H^0(M,L^k)$ its subspace of holomorphic sections. 

The dimension of $H^0(M,L^k)$ is finite. Let $\{s_1,\ldots, s_{N_k}\}$ be a unitary basis of $H^0(M,L^k)$. The Bergman function $\rho:M\rightarrow \R$ is defined by:
\begin{equation} \label{eq:Bergmann}
\rho_k(x):=\sum_{i=1}^{N_k} h^k_x(s_i(x),s_i(x)), \textrm{ for } x\in M.
\end{equation}
When it is constant for all $k>>1$, Cahen--Gutt--Rawnsley \cite{CGR} obtained a deformation quantization of the K\"ahler manifold. 

\noindent The following is a result of Zelditch \cite{Zel} and Lu \cite{Lu}.

\begin{theorem} \label{theor:asymptbergmann}
The Bergman function $\rho_k$ admits an asymptotic expansion as $k\rightarrow +\infty$,
\begin{equation} \label{eq:Bergmannexp}
\rho_k\sim a_0k^m+a_1k^{m-1}+a_2k^{m-2}+\ldots
\end{equation}
where the $a_i$'s are polynomials in the curvature of the K\"ahler manifold and its covariant derivatives. That is, for any $r$ and $s\in \N$, there exists $C_{s,r}>0$ such that:
\begin{equation*}
\left\| \rho_k - \sum_{i=0}^s a_ik^{m-i}\right\|_{C^r}\leq C_{s,r}k^{m-s-1}.
\end{equation*}
In particular, 
\begin{equation*}\label{coefficients}
a_0=1\textrm{ and }  a_1=\frac{1}{4\pi}S
\end{equation*}
where $S$ denotes the scalar curvature.
\end{theorem}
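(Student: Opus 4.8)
The plan is to prove this via the standard analysis of the Bergman kernel $B_k(x,y)$, the Schwartz kernel of the orthogonal projection $L^2(M,L^k)\to H^0(M,L^k)$, whose restriction to the diagonal recovers $\rho_k(x)=|B_k(x,x)|_{h^k}$. I would organize the argument in three stages: localization near the diagonal, comparison with a flat model, and computation of the two lowest coefficients. First (localization): I would establish that $\rho_k(x_0)$ is determined, up to an error of order $O(k^{-\infty})$, by the geometry of $(M,\omega,J,L,h)$ on an arbitrarily small ball around $x_0$. This rests on the off-diagonal Agmon-type decay estimate $|B_k(x,y)|_{h^k}\leq C k^m e^{-c\sqrt{k}\,d(x,y)}$, which follows from H\"ormander's $L^2$-estimates for $\barpartial$ together with the positivity $R^{\nabla^L}=-2\pi i\omega$, or from the analytic localization of the Kodaira Laplacian. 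Once the problem is localized it becomes a purely local model computation that is uniform in $x_0$ by compactness of $M$, and this uniformity is exactly what yields the $C^r$ bounds in the statement.

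Second (flat model and the expansion): fixing $x_0$, I would pick K\"ahler normal coordinates and a local holomorphic frame $e_L$ of $L$ with $h(e_L,e_L)=e^{-\phi}$, normalized so that $\phi$ has vanishing value and differential at $x_0$ and positive-definite Hessian. Rescaling $z=u/\sqrt{k}$ turns the weight $e^{-k\phi}$ into a perturbation of a Bargmann--Fock Gaussian weight on $\C^m$ (with the $2\pi$-dependent normalization fixed by $R^{\nabla^L}=-2\pi i\omega$), whose Bergman function on the diagonal is exactly constant. Treating the higher Taylor coefficients of $\phi$---which are polynomial in the curvature of the K\"ahler metric and its covariant derivatives---as perturbations, one obtains the expansion $\rho_k\sim a_0k^m+a_1k^{m-1}+\cdots$ with each $a_i$ a universal polynomial in curvature. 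Equivalently, and more robustly for the existence of the \emph{full} expansion, I would invoke Zelditch's method: lift to the unit circle bundle $X\subset L^*$, express $H^0(M,L^k)$ as the $k$-th Fourier mode of the Hardy space, apply the Boutet de Monvel--Sj\"ostrand parametrix for the Szeg\H{o} projector, extract the $k$-th Fourier component, and use stationary phase to obtain the diagonal expansion together with the polynomiality of the $a_i$.

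Third (leading coefficients): the normalization of the volume form $\dvol$ and of $R^{\nabla^L}$ forces $a_0=1$ directly from the flat model. For $a_1$ I would carry the perturbation expansion one order further: the first correction is governed by the fourth-order Taylor data of $\phi$, which contracts against the model Gaussian to produce a multiple of the trace of the curvature, that is, the scalar curvature $S$. A careful bookkeeping of the $2\pi$ factors coming from the prequantization normalization then yields $a_1=\frac{1}{4\pi}S$.

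The main obstacle is the combination of the first stage with the existence of the full expansion under uniform $C^r$ control: this is genuine microlocal and spectral analysis---the off-diagonal decay estimate and the convergence of the rescaled kernel in all $C^r$ norms---rather than formal algebra. By contrast, once that machinery is in place, the identification $a_0=1$ and $a_1=\frac{1}{4\pi}S$ is a finite computation whose only delicate point is tracking the normalization constants.
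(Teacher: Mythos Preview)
The paper does not supply its own proof of this theorem; it is stated as a result of Zelditch and Lu and simply quoted without argument, since this is a survey that uses the expansion as input for the discussion of trace densities. Your outline is essentially the standard route taken in those references: Zelditch's proof proceeds exactly via the lift to the circle bundle, the Boutet de Monvel--Sj\"ostrand parametrix for the Szeg\H{o} projector, and stationary phase, while Lu computes the low-order coefficients by a normal-coordinate expansion. So there is nothing to compare against in the paper itself, and your proposal matches the cited literature rather than diverging from it.
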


To a function $F \in C^{\infty}(M)$, one can associate a \emph{Toeplitz operator} $T^k_F \in End(H^0(M,L^k))$ defined by
$$ T^k_F: H^0(M,L^k)\rightarrow H^0(M,L^k): s\mapsto \Pi^k(F.s),$$
for $\Pi^k: \Gamma(M,L^k)\rightarrow H^0(M,L^k)$ being the $L^2$-projection. Hereafter are results of Bordemann-Meinrenken-Schlichenmaier, see \cite{Schlich}, that relates Toeplitz operators to star products.

\begin{theorem} \label{theor:BTstarandtrace} \hfill
\begin{enumerate}
\item There exists a unique star product $*_{BT}$ called \emph{Berezin-Toeplitz (BT) star product} defined by
$$F*_{BT}G:=\sum_{j=0}^{\infty}\nu^jC_j(F,G)\textrm{ for } F,G\in C^{\infty}(M)$$
such that 
\begin{equation*}
\left\|   T^k_F\circ T^k_G- \sum_{j=0}^{j=N-1} \left(\frac{1}{k}\right)^jT^k_ {C_j(F,G)} \right\|_{Op}\leq K_N(F,G)\left(\frac{1}{k}\right)^N
\end{equation*}
\item The trace of Toeplitz operators admits an asymptotic expansion of the form:
\begin{equation*}
\left|  Tr^k\left( T^k_F\right) - \sum_{j=0}^{j=N-1}\left(\frac{1}{k}\right)^{j-m} \int_M \tau_j(F)\dvol \right|\leq \tilde{K}_N(F)\left(\frac{1}{k}\right)^{N-m}
\end{equation*}
where the $\tau_j$'s are linear differential operators on $C^{\infty}(M)$, with $\tau_0=Id$. 
\item The trace of the BT star product is given by $\tr^{*_{BT}}(F):= \sum_{j=0}^{\infty}\nu^{j-m} \int_M \tau_j(F)\dvol$.
\end{enumerate}
\end{theorem}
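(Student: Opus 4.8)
The whole theorem rests on one piece of hard analysis, the near-diagonal asymptotics of the Bergman (equivalently Szeg\H{o}) kernel, and my plan is to extract this once and then feed it into three formal steps, one per item. The single analytic input I would isolate, quoting it from the theory of generalized Toeplitz operators of Boutet de Monvel--Guillemin (or from the off-diagonal Bergman kernel expansion of Ma--Marinescu), is that the class of Toeplitz operators is \emph{asymptotically closed under composition}: for $F,G\in C^{\infty}(M)$ there are functions $C_j(F,G)$, depending bidifferentially on $F$ and $G$, with
\begin{equation*}
\left\| T^k_F\circ T^k_G-\sum_{j=0}^{N-1}\left(\tfrac1k\right)^jT^k_{C_j(F,G)}\right\|_{Op}\le K_N(F,G)\left(\tfrac1k\right)^N
\end{equation*}
for all $N$. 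To get this I would lift sections of $L^k$ to functions of weight $k$ on the unit circle bundle of the prequantum bundle; there $\Pi^k$ becomes the $k$-th Fourier mode of the Szeg\H{o} projector and $T^k_F\circ T^k_G$ is a genuine Toeplitz operator in the Boutet de Monvel--Guillemin sense, whose symbolic calculus produces the displayed expansion, while the locality of the Szeg\H{o} kernel asymptotics forces the $C_j$ to be bidifferential. This is exactly the step I expect to be the main obstacle, being the only genuinely analytic ingredient.

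Granting this, item (1) is formal. Setting $F*_{BT}G:=\sum_j\nu^jC_j(F,G)$, associativity of $*_{BT}$ is read off by comparing the two asymptotic expansions of $T^k_F\circ T^k_G\circ T^k_H$, while $C_0(F,G)=FG$ and the antisymmetric leading term $C_1^-(F,G)=\{F,G\}$ (up to the conventional constant) are precisely the correct semiclassical behaviour of Toeplitz quantization. Uniqueness is immediate, since the operator-norm estimates determine each $C_j$ order by order in $1/k$.

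For item (2) I would simply evaluate the Toeplitz trace exactly. Writing $T^k_F$ in a unitary basis $\{s_1,\dots,s_{N_k}\}$ of $H^0(M,L^k)$ and using that $\Pi^k$ is a self-adjoint projection fixing holomorphic sections,
\begin{equation*}
Tr^k\!\left(T^k_F\right)=\sum_{i=1}^{N_k}\langle F s_i,s_i\rangle=\int_M F\,\rho_k\,\dvol .
\end{equation*}
Inserting the Zelditch--Lu expansion $\rho_k\sim\sum_i a_ik^{m-i}$ of Theorem \ref{theor:asymptbergmann} and putting $\nu=\tfrac1k$ gives the asserted expansion, with $\tau_j(F)=a_jF$ and $\tau_0=\mathrm{Id}$ since $a_0=1$; each $\tau_j$ may be rewritten as a differential operator with the same integral via its formal adjoint, matching the stated form.

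Finally, item (3) combines the previous two steps with the vanishing of the operator trace on commutators. With $\tr^{*_{BT}}(F):=\sum_j\nu^{j-m}\int_M\tau_j(F)\dvol$, substituting the composition expansion of item (1) into the trace expansion of item (2) shows that $\tr^{*_{BT}}(F*_{BT}G)$ is exactly the full $1/k$-expansion of $Tr^k(T^k_F\circ T^k_G)$. Because $Tr^k(T^k_F\circ T^k_G)=Tr^k(T^k_G\circ T^k_F)$ for every $k$, the two expansions coincide, whence $\tr^{*_{BT}}([F,G]_{*_{BT}})=0$ and $\tr^{*_{BT}}$ is a trace for $*_{BT}$.
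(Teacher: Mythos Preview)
The paper does not prove this theorem: it is a survey, and Theorem~\ref{theor:BTstarandtrace} is quoted as a result of Bordemann--Meinrenken--Schlichenmaier with a reference to \cite{Schlich} (and a remark that it can also be obtained via Ma--Marinescu \cite{MM}). So there is no ``paper's own proof'' to compare against.

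Your outline is the standard one and is correct in shape. The hard analytic input is exactly what you isolate, the symbolic calculus of Boutet de Monvel--Guillemin (or the off-diagonal Bergman kernel asymptotics of Ma--Marinescu) on the circle bundle, which gives the asymptotic closure of Toeplitz operators under composition with bidifferential coefficients; associativity, the leading terms $C_0(F,G)=FG$, $C_1^-(F,G)\propto\{F,G\}$, and uniqueness then follow formally as you say. Your argument for item~(2), computing $Tr^k(T^k_F)=\int_M F\rho_k\,\omega^m/m!$ and inserting the Tian--Yau--Zelditch expansion, is in fact precisely the argument the paper gives immediately afterwards for the \emph{next} theorem (Barron--Ma--Marinescu--Pinsonnault), which identifies the trace density with the formal Bergman expansion. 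Your treatment of item~(3) via $Tr^k(T^k_FT^k_G)=Tr^k(T^k_GT^k_F)$ is also the standard argument.

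One small point: in item~(2) you identify $\tau_j(F)=a_jF$ and then add that each $\tau_j$ ``may be rewritten as a differential operator with the same integral via its formal adjoint''. This is unnecessary: multiplication by $a_j$ \emph{is} already a differential operator of order~$0$, and in fact the paper (via the subsequent theorem) takes exactly $\tau_j(F)=a_jF$ with no rewriting.
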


\begin{rem}
The BT star product can also be obtained by Ma--Marinescu's method \cite{MM}.
\end{rem}

\begin{rem} \label{rk:BTfedosov}
The BT star product is known to be of \emph{separation of variables}, that is the $C_j$'s defining it differentiate the first argument in holomorphic direction and the second argument in anti-holomorphic direction. As a consequence of the work of Karabegov parametrising all such star products \cite{Kara96}, $*_{BT}$ can be build using a Fedosov-like construction, Bordemann-Waldmann \cite{BW} and Neumaier \cite{Neu}. 
\end{rem}

Combining point $3$ of the above Theorem and the Tian-Yau-Zelditch expansion, one can see that the ``formalisation'' of the asymptotic expansion of the Bergman function is a trace density for the Berezin-Toeplitz star product. 

\begin{theorem}[Barron-Ma-Marinescu-Pinsonnault \cite{MM2}]
The formal function $\rho(x):=\sum_{r=0}^{+\infty} a_r(x)\nu^{r-m}\in C^{\infty}(M)[\nu^{-1},\nu]]$, where the $a_i$'s come from  expansion \eqref{eq:Bergmannexp}, is a trace density of the Berezin-Toeplitz star product.
\end{theorem}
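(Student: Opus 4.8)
The plan is to reduce everything to a single exact identity relating the operator trace of a Toeplitz operator to the $L^2$-pairing of its symbol $F$ against the Bergman function $\rho_k$, and then to formalize both sides. Concretely, the first step is to establish that for every $F\in C^{\infty}(M)$ and every $k$,
\[
Tr^k\!\left(T^k_F\right)=\int_M F\,\rho_k\,\dvol,
\]
with $\rho_k$ the Bergman function of \eqref{eq:Bergmann}. This is immediate from the definitions: writing the trace in the unitary basis $\{s_1,\dots,s_{N_k}\}$ of $H^0(M,L^k)$ gives $Tr^k(T^k_F)=\sum_i\langle\Pi^k(F s_i),s_i\rangle$, and since $\Pi^k$ is the self-adjoint $L^2$-projection onto $H^0(M,L^k)$ and each $s_i$ is holomorphic (so $\Pi^k s_i=s_i$), we may drop the projection: $\langle\Pi^k(F s_i),s_i\rangle=\langle F s_i,s_i\rangle=\int_M F\,h^k(s_i,s_i)\,\dvol$. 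Summing over $i$ and using the definition of $\rho_k$ yields the identity.

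The second step is to feed the two available asymptotic expansions of $Tr^k(T^k_F)$ into this identity and match coefficients. Substituting the Tian--Yau--Zelditch expansion $\rho_k\sim\sum_{r\ge0}a_r k^{m-r}$ of Theorem \ref{theor:asymptbergmann} into the right-hand side gives $Tr^k(T^k_F)\sim\sum_{r\ge0}k^{m-r}\int_M F\,a_r\,\dvol$, while Theorem \ref{theor:BTstarandtrace}(2) gives $Tr^k(T^k_F)\sim\sum_{j\ge0}k^{m-j}\int_M\tau_j(F)\,\dvol$. Because these are asymptotic expansions of one and the same quantity, equating the coefficient of each power $k^{m-j}$ forces
\[
\int_M\tau_j(F)\,\dvol=\int_M F\,a_j\,\dvol\qquad\text{for all }j\ge0\text{ and all }F.
\]

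The third step is to pass to formal series by setting $\nu=1/k$ and invoking the definition of the BT trace in Theorem \ref{theor:BTstarandtrace}(3). Using the identity just obtained,
\[
\tr^{*_{BT}}(F)=\sum_{j\ge0}\nu^{j-m}\int_M\tau_j(F)\,\dvol=\sum_{j\ge0}\nu^{j-m}\int_M F\,a_j\,\dvol=\int_M F\,\rho\,\dvol,
\]
where $\rho=\sum_{r\ge0}a_r\nu^{r-m}$ is exactly the claimed formal function. Comparing with the defining relation \eqref{eq:defdens}, the only discrepancy is the overall factor $\nu^{-m}$; since trace densities are defined only up to multiplication by a formal constant in $\R[\nu^{-1},\nu]]$ and $\nu^{-m}$ is such a constant, this shows that $\rho$ is a trace density of $*_{BT}$.

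The genuinely substantive inputs, namely the existence and asymptotics of the Bergman kernel (Theorem \ref{theor:asymptbergmann}) and the existence of the BT star product together with its trace asymptotics (Theorem \ref{theor:BTstarandtrace}), are already in hand, so the argument is essentially bookkeeping. The one step demanding care is the first: I must use self-adjointness of $\Pi^k$ and holomorphicity of the $s_i$ to discard the projection, and should stress that $Tr^k(T^k_F)=\int_M F\rho_k\,\dvol$ is an \emph{exact} equality in $k$, not merely asymptotic, which is precisely what licenses the term-by-term matching of the two expansions. The main point I expect to be worth emphasizing is the tidy bookkeeping of powers of $\nu$ against $k$ and the harmless normalization by the formal constant $\nu^{-m}$.
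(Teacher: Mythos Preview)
Your proof is correct and follows essentially the same approach as the paper's: establish the exact identity $Tr^k(T^k_F)=\int_M F\rho_k\,\dvol$, plug in the TYZ expansion of $\rho_k$, and compare with the asymptotic expansion of Theorem~\ref{theor:BTstarandtrace} to read off the trace density. Your write-up is in fact more careful than the paper's sketch, spelling out why $\Pi^k$ can be dropped and explicitly addressing the $\nu^{-m}$ normalization against \eqref{eq:defdens}, which the paper leaves implicit.
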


\begin{proof}
We follow the proof of \cite{MM2}. By definition, 
$$Tr^k\left( T^k_F\right)=\sum_{i=1}^{N_k}\int_M h^k_x(F(x)s_i(x),s_i(x))\dvol=\int_M F(x)\rho_k(x)\dvol.$$
Using the expansion of $\rho_k$, we get
$$Tr^k\left( T^k_F\right) \sim \int_M F(x)\left( a_0k^m+a_1(x)k^{m-1}+a_2(x)k^{m-2}+\ldots\right)\dvol$$
From Theorem \ref{theor:BTstarandtrace}, we know that the coefficients of $\tr^{*_{BT}}(F)$ are given by the asymptotic expansion of $Tr^k\left( T^k_F\right)$.
\end{proof}

\begin{rem}
Related to the Remarks \ref{rk:BTfedosov} and \ref{rk:tracefed}, it should be possible to adapt the Fedosov's computation of trace density \cite{fed4} to obtain a recursive formula for the trace density of the BT star product and hence a recursive formula for the expansion of the Bergman function. It would be interesting to compare it with Lu's method \cite{Lu} to obtain the terms $a_2$ and $a_3$ of the expansion (\ref{eq:Bergmannexp}).
\end{rem}

Visibly, the closedness of the BT star product forces the scalar curvature of the K\"ahler metrics to be constant. Arezzo-Loi-Zuddas \cite{ALZ} proposes to study K\"ahler metrics for which the $a_r$'s are constant and expressed a sufficient condition in terms of balanced metrics. In the language of deformation quantization, Arezzo-Loi-Zuddas studied the closedness of the BT star product. Also, Lu-Tian \cite{LuTian} propose to study K\"ahler metrics for which $a_{m+1}=0$ and obtain a uniqueness result on complex projective spaces.

\begin{rem}
Related to Remark \ref{rk:closedandmu}, the fact that $*_{BT}$ must be equivalent to a closed star product does \emph{not} lead to the existence of constant scalar curvature K\"ahler metrics.
\end{rem}


\section{Yau-Tian-Donaldson conjecture on the existence of constant scalar curvature K\"ahler metrics}


\subsection{Yau-Tian-Donaldson conjecture} \label{sect:YTDconjecture}
Consider a K\"ahler metric $g = (g_{i{\barj}})$ on a compact K\"ahler manifold $M$ with  Ricci curvature $ R_{i{\barj}}$ and scalar curvature $S$ as defined in Section \ref{sect:BTstar}.
If the Ricci curvature 
is proportional to the K\"ahler metric $g$, that is, if there exists a real constant $k$ such that
\begin{equation*}
 R_{i{\barj}} = k g_{i\barj}
\end{equation*}
the metric is called 
a K\"ahler-Einstein metric. 
Obviously, a K\"ahler-Einstein metric is a constant scalar curvature K\"ahler (cscK for short) metric.
The K\"ahler metric 
is called an {\it extremal K\"ahler metric} if  the  $(1,0)$-part 
$$\mathrm{grad}^{1,0}S = \sum_{i,j = 1}^m 
g^{i\barj}\frac{\partial S}{\partial{\overline z}^{j}}
\frac{\partial}{\partial z^i}$$
of the gradient vector field of the scalar curvature $S$ is a holomorphic vector field. 
Obviously, a cscK metric is an extremal K\"ahler metric.
If a K\"ahler-Einstein metric, a cscK metric or  an extrenal K\"ahller metric exists, it is considered as a canonical metric on a compact K\"ahler manifold, and 
it is one of basic problems in K\"ahler geometry to find conditions for the existence of such metrics.

By the Chern-Weil theory the Ricci form 
$$\Ric_{\omega} = \sqrt{-1} \sum_{i,j=1}^{m} R_{i{\barj}} dz^i \wedge d{\bar z}^j $$
represents $2\pi c_1(M)$ for the first Chern class $c_1(M)$ as a de Rham class. 
If a K\"ahler-Einstein metric exists, 
in accordance with the sign of $k$,
$c_1(M)$ is represented by a positive, $0$ or negative
$(1,1)$-form. We express these three cases by writing
$c_1(M) > 0$, $c_1(M) = 0$ or $c_1(M) < 0$.
The condition $c_1(M) > 0$ is also expressed as saying $M$ is a Fano manifold. 
Apparently it is necessary for $M$ to admits a K\"ahler-Einstein metric that one of the three conditions has to be satisfied.
One may ask the converse. 
In \cite{yau78}, Yau proved conversely if $c_1(M) < 0$ then there exists a unique K\"ahler-Einstein metric in the K\"ahler class $-c_1(M)$,
and if $c_1(M) = 0$ then there exists a unique K\"ahler-Einstein metric in each K\"ahler class (Aubin \cite{aubin76} 
also proved the existence in the case of $c_1(M) < 0$ independently). In the case when $c_1(M) > 0$ Chen-Donaldson-Sun \cite{CDS3}
and Tian \cite{Tian12} 
proved that a necessary and sufficient condition for the existence is K-stability. We postpone the definition of K-stability until later in this subsection.
But the general Yau-Tian-Donaldson conjecture for cscK metric is stated as follows. We say a complex line bundle $L \to M$ is ample if $c_1(L) > 0$, and the pair $(M,L)$ 
is called a polarized manifold. Later we will define the notion of K-stability for polarized manifolds. 

In the Fano case we take $L = K_M^{-1}$, and in this case the conjecture was confirmed as mentioned above. For general polarization, 
this conjecture is still unsolved at the moment of this writing, and is being actively studied.
Before the study of the notion of K-stability there were several known necessary conditions and also sufficient conditions. Here we mention two
necessary conditions which are related to our study of deformation quantization.

Let  ${\mathfrak h}(M)$ denote the complex Lie algebra of all holomorphic vector fields on a compact K\"ahler manifold 
$M$. We set
$${\mathfrak h}_{red}(M) = \{ X \in {\mathfrak h}(M)\, |\,X\ \text{has a zero} \}$$
and call it the reduced Lie algebra of holomorphic vector fields. We abbreviate ${\mathfrak h}_{red}(M) $ as ${\mathfrak h}_{red}$ 
sometimes for simplicity.
It is a well-known result (\cite{Lic}, \cite{lebrunsimanca93} or \cite{GauduchonLN}) that for $X \in {\mathfrak h}_{red}(M)$ there exists 
uniquely up to constant functions a complex-valued smooth function 
$u_X$ such that
\begin{equation}\label{eq2}
 i(X) \omega = \sqrt{-1}\,\barpartial u_X.
  \end{equation}
In this sense ${\mathfrak h}_{red}(M)$ coincides with the set of all
``Hamiltonian'' holomorphic vector fields. 
(The terminology ``Hamiltonian'' may be misleading because $X$ does not preserve the symplectic form unless $u_X$ is a pure imaginary valued function).
We always assume that 
Hamiltonian function $u_X$ is normalized as
\begin{equation}\label{eq3}
 \int_M u_X\,\omega^m = 0.
\end{equation}

\begin{theorem}[\cite{calabi85}]\label{decomp}
Let $M$ be a compact extremal K\"ahler manifold. Then the Lie algebra ${\mathfrak h}(M) $ has a semi-direct sum decomposition
$$ {\mathfrak h}(M) = \mathfrak h_0 + \sum_{\lambda > 0}\mathfrak h_{\lambda} $$
where $\mathfrak h_{\lambda}$ is the $\lambda$-eigenspace of $\mathrm{ad}(\sqrt{-1}\mathrm{grad}^{1,0} S)$,
and
$\sqrt{-1}\mathrm{grad}^{1,0}S$ belongs to the center of $\mathfrak h_0$.
Further $\mathfrak h_0$ is reductive, and decomposes as $\mathfrak h_0 = \mathfrak a + \mathfrak h_0 \cap \mathfrak h_{red}$
where $\mathfrak a$ consists of parallel vector fields and thus is abelian.
We also have $\mathfrak h_{red} = \mathfrak h_0 \cap \mathfrak h_{red} +  \sum_{\lambda > 0}\mathfrak h_{\lambda}$. 
\end{theorem}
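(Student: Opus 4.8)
The plan is to build everything around the holomorphic field $\xi := \sqrt{-1}\,\mathrm{grad}^{1,0}S$, which lies in $\mathfrak{h}(M)$ exactly because $M$ is extremal. First I would recall Calabi's basic observation that $S$ is then a Killing potential, so that the Hamiltonian vector field $X_S = J\,\mathrm{grad}\,S$ is a genuine Killing field; the closure of its flow is a torus $T \subset \mathrm{Isom}_0(M,g)\cap\mathrm{Aut}_0(M)$, and $\xi = X_S^{1,0}$ sits in the complexified Lie algebra $\mathfrak{t}_{\mathbb{C}}$. Since for holomorphic $X$ the anti-holomorphic part of $X_S$ brackets trivially with $X$, a short coordinate check gives $\mathrm{ad}(\xi) = \mathcal{L}_{X_S}$ on $\mathfrak{h}(M)$; as $X_S$ generates the \emph{compact} torus $T$, this operator is semisimple, and decomposing $\mathfrak{h}(M)$ into its weight spaces yields the eigenspace decomposition $\mathfrak{h}(M) = \bigoplus_{\lambda}\mathfrak{h}_{\lambda}$ of the statement. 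Because $\mathrm{ad}(\xi)$ is a derivation, the Jacobi identity gives $[\mathfrak{h}_{\lambda},\mathfrak{h}_{\mu}]\subseteq\mathfrak{h}_{\lambda+\mu}$; in particular $\mathfrak{h}_0$ is a subalgebra containing $\xi$ in its centre, and $\bigoplus_{\lambda\neq0}\mathfrak{h}_{\lambda}$ is an ideal.

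Next I would pin down $\mathfrak{h}_0=\ker\mathrm{ad}(\xi)$, the centraliser of $\xi$, and prove it reductive; this is the step I expect to be the main obstacle. I would run a Matsushima--Lichnerowicz argument: a field $X\in\mathfrak{h}_{red}$ commutes with $\xi$ precisely when its holomorphy potential $u_X$ (normalised by \eqref{eq3}) is $T$-invariant, and the Hodge decomposition of the space of $T$-invariant holomorphy potentials into real and imaginary $\barpartial$-harmonic parts exhibits $\mathfrak{h}_0\cap\mathfrak{h}_{red}$ as the complexification of the Lie algebra of Killing fields commuting with $T$, hence reductive. It remains to absorb the fields of $\mathfrak{h}_0$ having no zeros: these are the parallel holomorphic fields, which by a Bochner argument are central in $\mathfrak{h}(M)$ and span an abelian subspace $\mathfrak{a}$ (the Albanese directions), giving the splitting $\mathfrak{h}_0 = \mathfrak{a}\oplus(\mathfrak{h}_0\cap\mathfrak{h}_{red})$.

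Finally I would show that the nonzero eigenspaces lie in $\mathfrak{h}_{red}$ and that only positive weights occur. For the first point, note that $\xi\in\mathfrak{h}_{red}$, since its potential $S$ has a critical point on the compact $M$; as $\mathfrak{h}_{red}$ is an ideal with abelian (central) quotient $\mathfrak{h}(M)/\mathfrak{h}_{red}$, the derivation $\mathrm{ad}(\xi)$ maps $\mathfrak{h}(M)$ into $\mathfrak{h}_{red}$, so every $\mathfrak{h}_{\lambda}$ with $\lambda\neq0$, being in the image of $\mathrm{ad}(\xi)$, is contained in $\mathfrak{h}_{red}$. The positivity, which is what upgrades the eigenspace sum to the asserted semi-direct sums $\mathfrak{h}(M)=\mathfrak{h}_0+\sum_{\lambda>0}\mathfrak{h}_{\lambda}$ and $\mathfrak{h}_{red}=\mathfrak{h}_0\cap\mathfrak{h}_{red}+\sum_{\lambda>0}\mathfrak{h}_{\lambda}$, is the second delicate point. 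I would derive it from the structure of $\mathrm{Aut}_0(M)$, whose maximal compact subgroup is $\mathrm{Isom}_0(M,g)\supseteq T$ for the extremal metric: the reductive $\mathfrak{h}_0$ is a Levi factor, $\bigoplus_{\lambda\neq0}\mathfrak{h}_{\lambda}$ is the nilpotent radical, and $\xi$ is the distinguished central element grading this radical with a single sign, which one orients to be positive. Assembling the grading, the reductive splitting of $\mathfrak{h}_0$, the inclusions just shown, and this positivity then yields the theorem.
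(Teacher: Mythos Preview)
The paper does not give a self-contained proof of this theorem; it quotes Calabi and only remarks (after Proposition~\ref{W2} and again after Theorem~\ref{FujDon}) that Calabi's argument proceeds via the Hessian of the Calabi functional $\int_M S^2\,\omega^m$, and that L.~Wang's finite-dimensional moment-map Hessian reproduces the same structure. So the comparison is with that mechanism rather than with a proof written out here.

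Your eigenspace decomposition under $\mathrm{ad}(\xi)$, the identification of $\mathfrak h_0$ with the centraliser of the Killing torus $T$, and the Matsushima--Lichnerowicz argument on $T$-invariant holomorphy potentials for the reductivity of $\mathfrak h_0\cap\mathfrak h_{red}$ are all in line with the standard route and are fine. The genuine gap is the positivity of the nonzero eigenvalues. You assert in the first paragraph that $\bigoplus_{\lambda\neq0}\mathfrak h_\lambda$ is an ideal; this is false before positivity is known, since $[\mathfrak h_\lambda,\mathfrak h_{-\lambda}]\subseteq\mathfrak h_0$, so the sum of nonzero weight spaces is not even a subalgebra unless all $\lambda$ share a sign. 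Your later justification---that $\mathfrak h_0$ is a Levi factor and the rest is the nilradical because $\mathrm{Isom}_0(M,g)$ is a maximal compact of $\mathrm{Aut}_0(M)$---is circular. That $\mathrm{Isom}_0(M,g)$ is a \emph{maximal} compact for the extremal metric $g$, and that the centraliser of $\xi$ coincides with a full Levi factor, are consequences of Calabi's theorem, not inputs; nothing in abstract Lie theory prevents a semisimple central element of a reductive subalgebra from acting with eigenvalues of both signs on the ambient algebra.

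What is missing is exactly the analytic input the paper points to. On holomorphy potentials the eigenvalue equation for $\mathrm{ad}(\sqrt{-1}\,\mathrm{grad}^{1,0}S)$ is governed by the fourth-order Lichnerowicz operator $\mathcal D^{*}\mathcal D$ with $\mathcal D u=\barpartial\,\mathrm{grad}^{1,0}u$; pairing with $\bar u$ and integrating (equivalently, the non-negativity of the Hessian of $\int_M S^2\,\omega^m$ at an extremal point, as in the model Proposition~\ref{W2}) forces $\lambda\ge0$. This step cannot be replaced by the structural argument you propose.
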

\noindent
From this theorem it follows that if $M$ admits a constant scalar curvature K\"ahler metric then we have  ${\mathfrak h}(M) = \mathfrak h_0$, and
therefore ${\mathfrak h}(M) $ is reductive. This result is called the Lichnerowicz-Matsushima 
theorem and is a well-known obstruction
for the existence of K\"ahler-Einstein metrics (Matsushima \cite{matsushima57}) and 
K\"ahler metrics of constant scalar curvature (Lichnerowicz \cite{Lic}) in 1950's.

Another obstruction is found by the first author in 1980's \cite{futaki83.1}. Take any K\"ahler class $\Omega := [\omega_0]$ represented by a K\"ahler form $\omega_0$.
Choose any $\omega \in \Omega$. We define a linear map $f : \mathfrak h_{\Omega} \to \bfC$ of the Lie subalgebra $\mathfrak h_{\Omega}$ consisting
of all elements in $\mathfrak h(M)$ preserving $\Omega$ into $\bfC$ by 
\begin{equation}\label{W3}
f(X) :=  \int_M XF \ \omega^m
\end{equation}
where $F \in C^{\infty}(M)$ is given by
$$ \Delta F = S - \frac{\int_M S \omega^m}{\int_M \omega^m}$$
and $XF$ denotes the derivative of $F$ by the holomorphic vector field $X$ and $\Delta=g^{\alpha \bar{\beta}}\partial_{\alpha}\partial_{\bar{\beta}}$.

\begin{theorem}[\cite{futaki83.1}]\label{W4}
Let $M$ be a compact K\"ahler manifold, $\Omega := [\omega_0]$ a fixed K\"ahler class.
Then $f(X)$ given by  (\ref{W3}) 
does not depend on the choice of a K\"ahler form $\omega \in \Omega$.
In particular $f$ is a Lie algebra homomorphism. 
Further, if there exists a constant scalar curvature K\"ahler metric in the K\"ahler class $\Omega$ then we have $f = 0$.
\end{theorem}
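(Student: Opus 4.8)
The plan is to prove the three assertions in order. The vanishing is immediate: if $\omega\in\Omega$ has constant scalar curvature then $S$ equals its average $\bar S:=\int_M S\,\omega^m/\int_M\omega^m$, so the defining relation $\Delta F=S-\bar S$ forces $\Delta F=0$; as $M$ is compact $F$ is a constant, whence $XF\equiv 0$ and $f(X)=0$. The content of the theorem is therefore the independence of $f(X)$ from the representative $\omega\in\Omega$; the Lie algebra homomorphism property will then follow formally from that independence together with the naturality of the construction.

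For the independence, the $\partial\bar\partial$-lemma shows that the K\"ahler forms in $\Omega$ form a convex, hence connected, family, so it suffices to prove $\frac{d}{dt}f_t(X)=0$ along a path $\omega_t=\omega+t\sqrt{-1}\,\partial\bar\partial\varphi$ with $\varphi\in C^\infty(M)$. Denoting by a dot the derivative $\frac{d}{dt}\big|_{t=0}$, I would first record the first variations: $\dot g_{i\bar{j}}=\partial_i\partial_{\bar{j}}\varphi$, the volume variation $\frac{d}{dt}\,\omega_t^m=(\Delta\varphi)\,\omega^m$, and the linearized scalar curvature $\dot S=-\Delta^2\varphi-R^{i\bar{j}}\partial_i\partial_{\bar{j}}\varphi$. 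Crucially, by Chern--Weil the average $\bar S$ is a topological constant (proportional to $c_1(M)\cdot\Omega^{m-1}/\Omega^m$), so $\dot{\bar S}=0$; differentiating $\Delta_t F_t=S_t-\bar S$ then yields $\Delta\dot F=\dot S-\dot\Delta\,F$, which determines $\dot F$ up to an additive constant that $X$ annihilates.

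Differentiating $f_t(X)=\int_M (XF_t)\,\omega_t^m$ produces two groups of terms, and the crux is their cancellation. The cleanest organization first rewrites $f(X)=-\int_M u_X(S-\bar S)\,\omega^m$, using the integration-by-parts identity $\int_M(Xh)\,\omega^m=-\int_M u_X(\Delta h)\,\omega^m$ valid for $X\in\mathfrak{h}_{red}$ with potential $u_X$ from \eqref{eq2} (the general case of \eqref{W3} is reached by a direct integration by parts exploiting $\partial_{\bar{j}}X^i=0$). Along the path one also needs $\dot u_X=X\varphi$, which follows from $i(X)(\sqrt{-1}\,\partial\bar\partial\varphi)=\sqrt{-1}\,\bar\partial(X\varphi)$. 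The decisive ingredient is the Lichnerowicz operator $\mathcal{D}\psi:=\bar\partial(\mathrm{grad}^{1,0}\psi)$, whose self-adjoint fourth-order composition satisfies (up to the fixed sign conventions) $\mathcal{D}^*\mathcal{D}\psi=\Delta^2\psi+R^{i\bar{j}}\partial_i\partial_{\bar{j}}\psi+\langle\partial S,\partial\psi\rangle$. Hence $\int_M u_X(\Delta^2\varphi+R^{i\bar{j}}\varphi_{i\bar{j}})\,\omega^m=\int_M\langle\mathcal{D}u_X,\mathcal{D}\varphi\rangle\,\omega^m-\int_M u_X\langle\partial S,\partial\varphi\rangle\,\omega^m$, and holomorphicity of $X$ is exactly the statement $\mathcal{D}u_X=0$, so the fourth-order contribution disappears and the surviving first-order terms cancel, giving $\frac{d}{dt}f_t(X)=0$.

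For the homomorphism property, since $\mathbb{C}$ is abelian it suffices to prove $f([X,Y])=0$. The construction is natural, $f_{\psi^*\omega}(\psi^*X)=f_\omega(X)$ for every biholomorphism $\psi$; taking $\psi=\psi_t$ the flow of $\mathrm{Re}\,Y$, which preserves $\Omega$ so that $\psi_t^*\omega\in\Omega$, the independence just proved gives $f_\omega(\psi_t^*X)=f_\omega(X)$, and differentiating at $t=0$ with $\frac{d}{dt}\big|_0\psi_t^*X=[Y,X]$ yields $f([Y,X])=0$; repeating with $\mathrm{Im}\,Y$ gives the full $\mathbb{C}$-linear statement. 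The main obstacle is the cancellation of the previous paragraph: it couples the nonlocal $\dot F$ (obtained by inverting $\Delta$) with the fourth-order linearization of $S$, and the vanishing is not termwise but only emerges after repeated integration by parts using $\bar\partial X=0$. Pinning down the precise signs and constants in the $\mathcal{D}^*\mathcal{D}$ identity, and confirming that $\mathrm{grad}^{1,0}u_X$ genuinely lies in $\ker\mathcal{D}$, is where the real work concentrates.
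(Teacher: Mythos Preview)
Your argument is correct in outline and follows the classical direct-variation route (essentially the original argument of \cite{futaki83.1} and Calabi \cite{calabi85}): fix $J$, move $\omega$ inside $\Omega$ by a potential $\varphi$, and kill the fourth-order term using the Lichnerowicz identity together with $\mathcal D u_X=0$. The vanishing for cscK and the homomorphism property are handled exactly as you say.

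The paper, however, does not prove Theorem~\ref{W4} this way. Its proof appears later, in subsection~\ref{GITmoment}, via the Donaldson--Fujiki picture: one fixes $\omega$ and varies instead the compatible complex structure $J$ in the infinite-dimensional K\"ahler manifold $Z$. The scalar curvature is the moment map for the Hamiltonian action on $Z$ (Theorem~\ref{FujDon}), and the moment map identity \eqref{mmeq},
\[
\left.\frac{d}{dt}\right|_{t=0}\langle\mu(J_t),u_X\rangle=(JL_XJ,\dot J)_{L^2},
\]
shows immediately that the derivative of $f(X)=-\int_M S_J u_X\,\omega^m$ along any path $J_t$ vanishes whenever $L_XJ=0$, i.e.\ whenever $X$ is holomorphic. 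No explicit Lichnerowicz computation is written out; the cancellation you organize by hand is absorbed into the single statement that scalar curvature is a moment map.

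The two approaches are dual: you vary $\omega$ with $J$ fixed, the paper varies $J$ with $\omega$ fixed, and both exploit the same underlying fact that holomorphicity of $X$ annihilates the relevant pairing. Your route is more self-contained and does not require setting up the infinite-dimensional symplectic framework; the paper's route is shorter once that framework is in place and ties the invariant directly to the GIT/K-stability theme of the survey. Your closing caveat about signs in the $\mathcal D^{*}\mathcal D$ identity is well placed: that bookkeeping is genuinely where a fully written-out version of your argument would spend its effort, whereas the moment-map proof sidesteps it entirely.
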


There is a Fano manifold satisfying Matsushima's condition of reductiveness but $f \ne 0$, see \cite{futaki83.1}.

Then by (\ref{eq2}), a Hamiltonian holomorphic vector field $X \in \mathfrak h_{red}$ 
is expressed as $X = \mathrm{grad}^{1,0} u_X$. Here, $\mathrm{grad}^{1,0} u_X$ 
is the $(1,0)$-part 
$$\mathrm{grad}^{1,0} u_X = \sum_{i,j = 1}^m 
g^{i\barj}\frac{\partial u_X}{\partial{\overline z}^{j}}
\frac{\partial}{\partial z^i}$$
of the gradient vector field of $u_X$. 
Then the Lie algebra homomorphism  \eqref{W3} is expressed as 
\begin{equation}\label{futaki2}
f(X) = - \int_M u_X S \omega^m. 
\end{equation}

There are many other ways to express the invariant $f(X)$. First we introduce an expression
as an element of equivariant cohomology due to Futaki-Morita \cite{futakimorita85}, see also \cite{futaki88}.

Let $G$ be a complex Lie group, and let $\pi : P \to M$ be a complex analytic
principal $G$-bundle with the right action of $G$.
We assume that the action of the structure group $G$ is a right
action. Each $Y \in \mathfrak g$ defines a complex vector field $Y_\ast$ on $P$ induced by the
action of $G$. 
Suppose that the group $H(M)$ of all automorphisms 
acts on $P$ from the left complex analytically and commuting with the
action of $G$. 

Let $\theta$ be a type (1,0)-connection on $P$ and $\Theta$ its
curvature form. 
Recall that a connection gives by definition a $\mathfrak g$-valued $1$-form
$\theta$ on $P$, called the connection form, such that
\begin{equation*}\label{1.2.3}
\theta(Y_\ast) = Y,\quad \theta(\overline{Y_\ast}) = 0 \qquad \mathrm{for\ every}\ Y \in {\mathfrak g},
\end{equation*}
\begin{equation*}\label{1.2.4}
(R_a)^\ast \theta = \operatorname{ad} (a^{-1}) \theta
\end{equation*}
where $\operatorname{ad}$ denotes the adjoint representation of $G$ on $\mathfrak g$.

Since $H(M)$ acts on $P$, $X \in \mathfrak h(M)$ defines a
holomorphic vector field on $P$, which we shall denote by $X_\ast$.
Since the actions of $G$ and $H(M)$
commutes, $R_{g\ast}X_\ast = X_\ast$. 
Let $I^{k}(G)$ be the set of all $G$-invariant polynomials of degree $k$ on $\mathfrak g$. 
It is shown in \cite{futakimorita85} that, for $\varphi \in I^{m+p}(G)$, $\varphi(\theta(X_{1\ast}), \cdots , \theta( X_{p\ast}), \Theta, \cdots , \Theta)$ is a
well defined $2m$-form on $M$ where $p \ge 0$ and $X_1, \cdots , X_p \in
\mathfrak h(M)$. We define $f_\varphi : \otimes ^p\, \mathfrak h(M) \to \bfC$ by
\begin{equation}\label{5.2.4}
f_\varphi ( X_1 , \cdots , X_p ) = \binom{m+p}{p}\int_M  \varphi(\theta(X_{1\ast}), \cdots , \theta ( X_{p\ast}) , \Theta , \cdots , \Theta )).
\end{equation}
Further, it is shown that 
the definition of $f_\varphi$ is independent of the
choice of the type $(1,0)$ connection $\theta$ and that $f_\varphi$ is an $H(M)$-invariant polynomial 
of degree $p$. Thus 
we get a linear map 
$F : I^{m+p}(G) \to I^p(H(M))$ by defining $F(\varphi) = f_\varphi$.

Let $N$ be a smooth manifold on which a Lie group $G$ acts, and
$E_G \to B_G$ be the universal $G$-bundle. The cohomology group of $N_G = E_G\times_G N$, 
usually denoted by $H^\ast_G (N)$, is called the equivariant cohomology
of $N$. In the special case when $N$ is a point, $H_G^\ast(pt) = H^\ast(BG)$. If
$G$ acts on $N$ freely, $H_G^\ast(N) \cong H^\ast(N/G)$.
Now let $M$ be a compact complex manifold of dimension $m$ and $H$
be the group of all automorphisms of $M$. Let $P \to M$ is a complex
analytic principal bundle whose structure group is a complex Lie group
$G$ with the right action. Assume that the action of $H$ on $M$ lifts
to a left action on $P$ commuting with the action of $G$. Then $P_H = EH \times_H P \to M_H = E H \times_H M$ 
is a principal $G$-bundle.
Then it is shown in \cite{futakimorita85} that 
the following diagram commutes:
$$
\begin{array}{ccc}
I^{m+p}(G) & \mapright{F} & I^p(H)\\
\mapdown{w} & & \mapdown{w}\\
H^{2m+2p}_H(M;\bfC) & \mapright{\pi_\ast} & H^{2p}_H(pt;\bfC)
\end{array}
$$
where the two $w$'s are Weil homomorphisms corresponding to $P_H \to M_H$ and
$E_H \to B_H$, and $\pi_\ast$, denotes the Gysin map (namely integration over the fiber) of $\pi : M_H \to B_H$.

For $p=1$, we may restrict $H$ to $S^1 \subset \bfC^\ast$. Then $B_H = B_{S^1} = \bfC\bfP^\infty$.
Since $H^{2}_H(pt;\bfC) = H^2( \bfC\bfP^\infty;\bfC) = H^2(\bfC\bfP^1;\bfC) = \bfC$, the right hand side of \eqref{5.2.4} can be
considered as an element of $H^{2}_H(pt;\bfC)$ when $X$ is the infinitesimal generator of $S^1$. If we take $\varphi = c_1^{m+1}$
for a Fano manifold $M$ 
where $c_1 = \tr$ we have 
\begin{equation}\label{futaki1}
f_{c_1^{m+1}} (X) = (m+1) \int_M (\mathrm{div}_\eta X/2\pi)\ \Ric_\eta^m
\end{equation}
 where $\eta$ is a K\"ahler form on $M$.
For a K\"ahler form $\omega$ we may take $\omega = \Ric_\eta$ by Calabi-Yau theorem (\cite{yau78}). Then the right
hand side of \eqref{futaki1} becomes $(m+1)$ times \eqref{futaki2}. See page 69, \cite{futaki88} for the proof.

The next expression of the invariant $f(X)$ is due to Donaldson \cite{donaldson02}. Since this expression is used to define
K-stability we formulate it for schemes.
Let $\Lambda \to N$ be an ample line bundle over an $n$-dimensional projective 
scheme $N$. We assume there is a ${\mathbb C}^*$-action as bundle isomorphisms of 
$\Lambda$ covering a ${\mathbb C}^*$-action on $N$. 
For any positive integer $k$, there is an induced $\bfC^*$ action on
$H^0(N, \Lambda^k)$. Put
$d_k = \dim H^0(N, \Lambda^k)$ and let $w_k$ be the weight of $\bfC^*$-action on 
$\wedge^{d_k}H^0(N, \Lambda^k)$. 
For large $k$, 
$d_k$ and $w_k$ are polynomials in $k$ of degree $n$  and $n+1$ respectively
by the Riemann-Roch and the equivariant Riemann-Roch theorems. Therefore 
$w_k/kd_k$ is bounded from above as $k$ tends to infinity.
For sufficiently large $k$ we expand
$$ \frac{w_k}{kd_k} = F_0 + F_1k^{-1} + F_2k^{-2} + \cdots. $$

Now let $L \to M$ is an ample line bundle over a smooth complex manifold $M$ and apply the above formulation
by taking $(\Lambda, N) = (L,M)$ and consider $c_1(L)$ as a K\"ahler class.
We show
\begin{equation}\label{futaki3}
F_1 = \frac{-1}{2m! vol(M, \omega)}f(X)
\end{equation}
when $\sqrt{-1} X$ generates an $S^1$-action.
To show \eqref{futaki3} let us denote by $n$ the complex dimension of $M$.
Expand $d_k$ and $w_k$ as
$$ d_k = a_0k^m + a_1k^{m-1}+ \cdots,$$
$$ w_k = b_0k^{m+1} + b_1k^m + \cdots.$$
Then by the Riemann-Roch and the equivariant Riemann-Roch formulae 
$d_k$ and $w_k$ are computed as degree $0$ and $1$ terms in $t$ of the integral of
\begin{eqnarray*}
e^{k(\omega + t u_X)} Td(\frac{\sqrt{-1}}{2\pi}(tL(\sqrt{-1}X) + \Theta))
= \sum_{p=0}^{\infty} \frac{k^p}{p!}(\omega + t u_X)^p
\sum_{q=0}^{\infty} Td^{(q)}(\frac{\sqrt{-1}}{2\pi}(tL(\sqrt{-1}X) + \Theta))
\end{eqnarray*}
over $M$, c.f. \eqref{5.2.4} or \cite{BGV92}, \cite{BV83}. Here 
$Td^{(q)}$ is the Todd polynomial of degree $q$, $L(X) = \nabla_X - L_X$
and $t$ is the generator of $H^2_{S^1}(pt;\bfZ) = H^2(\bfC\bfP^1;\bfZ)$ of the equivariant cohomology.
Thus we obtain
$$ a_0 = \frac 1{m!}\int_M c_1(L)^m = vol(M), $$
$$ a_1 = \frac 1{2(m-1)!} \int_M \Ric \wedge c_1(L)^{m-1} = \frac 1{2m!} \int_M S\, \omega^m, $$
$$ b_0 = \frac 1{(m+1)!}\int_M (m+1) u_X \omega^m, $$
$$ b_1 = \frac 1{m!} \int_M m u_X \omega^{m-1} \wedge \frac 12 c_1(M) - \frac 1{m!} \int_M 
\operatorname{div}X\ \omega^m .$$
Here, $\omega$ is a K\"ahler form in $c_1(L)$ and $\Ric = \Ric_\omega$ is the Ricci form of $\omega$.
The last term of the previous integral is zero because of the divergence formula. Thus
$$ \frac {w_k}{k d_k} = \frac{b_0}{a_0}(1 + (\frac{b_1}{b_0} - \frac{a_1}{a_0})k^{-1} + \cdots ) $$
from which we have
\begin{eqnarray*}
 F_1 &=& 
 \frac{b_0}{a_0}(\frac{b_1}{b_0} - \frac{a_1}{a_0}) = \frac 1{a_0^2}(a_0b_1 - a_1b_0)\\
 &=& \frac 1{2vol(M)}\int_M u_X(S - \frac 1{vol(M)}\int_M S\, \frac{\omega^m}{m!})\frac{\omega^m}{m!}\\
 &=&  \frac 1{2vol(M)}\int_M u_X \Delta F \frac{\omega^m}{m!}
 \ =\   \frac{ -1}{2vol(M)}\int_M XF \frac{\omega^m}{m!}\\
 &=&  \frac{ -1}{2m!vol(M)}f(X).
\end{eqnarray*}
This competes the proof of \eqref{futaki3}.

Another useful formula is the cohomology formula due to Odaka \cite{odaka13} and Wang \cite{wangxw12}.
Let $X$ be a holomorphic vector field on $M$ which generates an $S^1$-action. 
Suppose the $S^1$-action lifts to a holomorphic action on the total space of an ample line bundle $L \to M$.
Let $\mathcal L \to \bfC\bfP^1$ and $\mathcal M \to \bfC\bfP^1$ be the restriction
to $\bfC\bfP^1 \subset \bfC\bfP^\infty$ of the $L$-bundle $E_{S^1} \times_{S^1} L \to B_{S^1} = \bfC\bfP^\infty$ 
and $M$-bundle $E_{S^1} \times_{S^1} M \to B_{S^1} = \bfC\bfP^\infty$ associated to
the universal $S^1$-bundle. 
Then $f(X)$ can be computed by the intersection number
\begin{equation}\label{intersection no}
f(X) = \frac{m}{m+1} \mu(M, L) c_1(\mathcal L)^{m+1} + c_1(\mathcal L)^m \cdot c_1(K_{\mathcal M/\bfC\bfP^1})
\end{equation}
where
\begin{equation*}\label{slope}
\mu(M,L) := \frac{-c_1(K_M)\cdot c_1(L)^{n-1}}{c_1(L)^n}
\end{equation*}
is the average scalar curvature of a K\"ahler metric in $c_1(L)$. One can show \eqref{intersection no} by 
expressing the equivariant Chern classes as
$$ c_1(\mathcal L) = [\omega + tu_X]\ \ \mathrm{and}\ \  c_1(K_{\mathcal M/\bfC\bfP^1}) = -[\Ric - t \mathrm{div} X].$$
\noindent 
There are other expressions of $f(X)$ such as the degree of CM-line bundle or the degree of Deligne pairing, which
can be shown to coincide by similar computations.

In \cite{tian97} Tian defined the notion of K-stability for Fano manifolds
and proved that if a Fano manifold carries a K\"ahler-Einstein metric then
$M$ is weakly K-stable. Tian's K-stability considers the degenerations of 
$M$ to
normal varieties and uses a generalized version of the invariant $f(X)$.
Note that this generalized invariant is only defined for normal varieties. 
As described above, Donaldson re-defined in \cite{donaldson02} the invariant $f(X)$ 
for projective schemes and also re-defined 
the notion of K-stability for $(M, L)$. The new definition does not require $M$ to be
Fano nor the central fibers of  degenerations to be normal. We now review 
Donaldson's definition of K-stability.
For an ample line bundle $L$ over a projective variety  $M$, a test configuration of
exponent $r$ is a normal polarized variety $(\mathcal M, \mathcal L)$ with the following properties:\\
(1)\ \ there is a $\bfC^*$-action on ${\mathcal M}$ lifting to $\mathcal L$,\\
(2)\ \ there is a flat $\bfC^*$-equivariant morphism $\pi : {\mathcal M} \to \bfP^1$ for the standard $\bfC^*$-action on $\bfP^1$,\\
such that
over $\bfP^1-\{0\}$, $(\mathcal M, \mathcal L)$ is equivariantly isomorphic to $(M \times (\bfC^\ast \cup \{\infty\}), p_M^\ast L^r)$
with the trivial action on the first factor $M$.

The $\bfC^*$-action induces a $\bfC^*$-action on the central fiber
$L_0 \to M_0 = \pi^{-1}(0)$. 
We put $DF(\mathcal M, \mathcal L) := - F_1$ which is called the Donaldson-Futaki invariant of the test configuration
$(\mathcal M, \mathcal L)$. 

If a holomorphic vector field $X$ is the infinitesimal generator of an $S^1$-action on the polarized manifold $(M,L)$, 
the restriction to $\bfP^1$ of $(M_{S^1}, L_{S^1}) = E_{S^1}\times_{S^1} (M,L) \to B_{S^1} = \bfP^\infty$ is a test configuration. This
is called a product test configuration since $M_{S^1}|_{\bfP^1-\{\infty\}} \cong \bfC\times M$ with the diagonal $\bfC^\ast$-action, 
and $DF(M_{S^1},L_{S^1})$ coincides with $f(X)/2vol(M,\omega)$ by \eqref{futaki3}. 


\begin{defi}\ \ $(M,L)$ is said to be K-semistable (resp. stable) if
the $DF(\mathcal M,\mathcal L)$ is non-negative (positive)
for all non-trivial test configurations. $(M,L)$ is said to be
K-polystable if it is K-semistable and $DF(\mathcal M,\mathcal L) = 0$ only
if the test configuration is product. $(M,L)$ is said to be
K-stable if it is K-polystable and the automorphism group of $(M,L)$ is finite.
\end{defi}

\noindent
{\bf Yau-Tian-Donaldson conjecture} : 
For a polarized manifold $(M,L)$, there exists a constant scalar curvature K\"ahler metric in the K\"ahler class $c_1(L)$ if and only if $(M,L)$ is K-polystable.

\begin{rem}
There are other conventions in which K-stable means K-polystable.
\end{rem}
\begin{rem}
Instead of $-F_1$ one may use Odaka-Wang's intersection number in the right hand side of \eqref{intersection no}. See also \cite{LiXu14} and \cite{suzuki16}.
\end{rem}
\begin{rem}
It is known that we may assume $(\mathcal M, \mathcal L)$ is smooth and that the central fiber $M_0$ is reduced, see \cite{DervanRoss}.
\end{rem}
\begin{rem}
K-semistability implies $f(X) = 0$ for any $X$ since both $f(X)$ and $f(-X)$ are non-negative.
\end{rem}
\begin{rem}
Yau-Tian-Donaldson conjecture has been confirmed for Fano manifolds with $L = K_M^{-1}$ (\cite{CDS3}, \cite{Tian12}).
In this Fano case it is known that Donaldson's K-stability is equivalent to Tian's original definition, see \cite{LiXu14}.
\end{rem}

\bigskip


\subsection{Geometric invariant theory and moment map}\label{GITmoment}
The notion of K-stability is modeled on
Geometric Invariant Theory (GIT for short) due to Mumford \cite{mumford}
to construct good moduli space when the equivalence classes are given by orbits of a group action.
The invariant $DF$ is used as the Mumford weight in the Hilbert-Mumford criterion as explained below.
The idea is to discard ``unstable orbits'' and take the quotient of (semi)stable orbits, and
then one will get a Hausdorff and compactifiable moduli space.

There is a moment map interpretation due to Kempf and Ness \cite{KempfNess} (see also \cite{donkro})
of stable orbits.
Let 
$Z$ be a compact K\"ahler manifold with K\"ahler form $\kappa$, and 
$\pi : \Lambda \to Z$ a holomorphic line bundle with $c_1(\Lambda) = [\kappa]$.
Suppose a reductive complex Lie group $G$ is a complexification $K^c$ of a compact
Lie group $K$ where $K$ acts on $Z$ in the Hamiltonian way, i.e. 
for any $X \in \mathfrak k := \mathrm{Lie}(K)$ we have
$$ i(X)\kappa = - d\mu_X$$
for some smooth function $\mu_X \in C^\infty(Z)$.
Then 
$\mu : Z \to \mathfrak k^\ast$ is called the moment map for the action of $K$ if $\mu$ is $K$-equivariant and
$$ \langle \mu, X \rangle = \mu_X. $$
Suppose the action of $K^c$ lifts to $\Lambda$.
Let $p \in Z$.
\begin{defi}
The orbit $K^c\cdot p$ is said to be polystable if, for $\widetilde{p} \in \Lambda^{-1}$ with $\pi(\widetilde{p}) = p$, $\widetilde{p}\ne 0$, 
the orbit $K^c\cdot \widetilde{p}$ in $\Lambda^{-1}$ 
is closed. Note that this is independent of choice of such $\tilde p$. The orbit $K^c\cdot p$ is said to be stable if it is polystable and
$p$ has finite stabilizer.
\end{defi}
\noindent
Kempf-Ness theorem asserts that
the orbit $K^c\cdot p$ is polystable if and only if 
$K^c\cdot p$ has a zero point of $\mu$. That is,
$$ K^c\cdot p \cap \mu^{-1}(0) \ne \emptyset.$$
\noindent
Hilbert-Mumford criterion says that $p \in Z$ is stable with respect to $K^c$-action if and only if 
$p\in Z$ is polystable
with respect to every one parameter subgroup $\sigma : \bfC^\ast \to K^c$.
If $\lim_{t \to 0} \sigma(t)p = p_0$ then $p_0$ is a fixed point of $\sigma$, and $\sigma(t)\Lambda_{p_0} = \Lambda_{p_0}$.
Then $\sigma(t) : \Lambda_{p_0} \to \Lambda_{p_0}$ is a linear action. Let  $\alpha$ be its weight so that
$z \mapsto t^{-\alpha} z$.
Then $p \in Z$ is polystable with respect to $\sigma$ if and only if $\alpha > 0$.
We call the weight $\alpha$ the Mumford weight. Thus $p \in Z$ is polystable if and only if the Mumford weight $\alpha$ is positive for
every one parameter subgroup $\sigma$.

There exists an Hermitian metric $h$ on $\Lambda^{-1}$ such that its Hermitian connection $\theta$ satisfies
$$ - \frac 1{2\pi} d\theta = \pi^{\ast}\kappa.$$
\noindent
We define a function $\ell : K^c\cdot \tilde p \to {\mathbb R}$ on the orbit $K^c\cdot \tilde p \subset \Lambda^{-1}$, $\tilde p \ne 0$, by
\begin{equation}\label{function_ell}
 \ell(\gamma) = \log |\gamma|^2
 \end{equation}
where the norm $|\gamma|$ is taken with respect to $h$. The following is well-known, see \cite{donkro}, section 6.5. 
\begin{itemize}
\item\ \ The function $\ell$ has a critical point if and only if the moment map $\mu : Z \to \mathfrak k^{\ast}$ has a zero on $\Gamma$.
\item\ \ The function $\ell$ is convex.
\end{itemize}
The Donaldson functional in Kobayashi-Hitchin correspondence and Mabuchi K-energy in the study of cscK metrics are modeled on this functional 
$\ell$, and enjoy these two properties.

Suppose we are given a $K$-invariant inner product on ${\mathfrak k}$. Then we have a natural identification
${\mathfrak k} \cong {\mathfrak k}^{\ast}$, and ${\mathfrak k}^{\ast}$ also has a 
$K$-invariant inner product.
Let us consider the function $\phi : K^c\cdot x_0 \to {\mathbb R}$ given by
$\phi(x) = |\mu(x)|^2$. A critical point $x \in K^c\cdot x_0$ of $\phi$ is called an
 {\bf extremal point}.
\begin{prop}[\cite{xwang04}]\label{W2}
Let $x \in K^c\cdot x_0$ be an extremal point. Then we have a decomposition of the Lie algebra
$$ ({\mathfrak k}^c)_x = ({\mathfrak k}_x)^c + \sum_{\lambda > 0} {\mathfrak k}^c_{\lambda} $$
where ${\mathfrak k}^c_{\lambda}$ is the $\lambda$-eigenspace of ${\mathrm ad}(\sqrt{-1}\mu(x))$, and
$\sqrt{-1}\mu(x)$ belongs to the center of $({\mathfrak k}_x)^c$. In particular we have
$({\mathfrak k}_x)^c = ({\mathfrak k}^c)_x$ if $\mu(x) = 0$.
\end{prop}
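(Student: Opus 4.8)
The plan is to extract from the critical point condition exactly the algebraic data recorded in the statement, working throughout with the $K$-invariant identification $\mathfrak{k}\cong\mathfrak{k}^*$, so that $\mu(x)$ is viewed as an element of $\mathfrak{k}$ and $\phi(x)=\langle\mu(x),\mu(x)\rangle$. Two elementary moment-map identities do all the work. First, since $\mu$ is $K$-equivariant and the inner product is $\mathrm{Ad}$-invariant, $\phi$ is $K$-invariant, so $d\phi$ annihilates every Hamiltonian direction $\xi_Z$ with $\xi\in\mathfrak{k}$. Second, for $\zeta\in\mathfrak{k}$ the function $\langle\mu,\zeta\rangle$ has gradient $-J\zeta_Z$, which is just the defining relation $i(\zeta_Z)\kappa=-d\langle\mu,\zeta\rangle$ read off against the metric (up to the sign convention relating $\kappa$, $g$ and $J$).

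First I would isolate the content of criticality. Writing $Y_Z=\xi_Z+J\eta_Z$ for $Y=\xi+\sqrt{-1}\eta\in\mathfrak{k}^c$ and using that $d\phi$ kills the $\xi_Z$ part, the condition $d\phi(Y_Z)=0$ for all $Y$ reduces to $d\phi(J\eta_Z)=0$ for all $\eta\in\mathfrak{k}$. Computing $d\phi_x(J\eta_Z)=2\langle\mu(x),d\mu_x(J\eta_Z)\rangle=-2g(\mu(x)_Z,\eta_Z)|_x$ and specialising to $\eta=\mu(x)$ forces $\mu(x)_Z(x)=0$; that is, $\mu(x)\in\mathfrak{k}_x$, and hence $\sqrt{-1}\mu(x)\in(\mathfrak{k}_x)^c\subseteq(\mathfrak{k}^c)_x$.

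Next come centrality and the eigenspace decomposition. For $\eta\in\mathfrak{k}_x$ the flow $\exp(t\eta)$ fixes $x$, so $K$-equivariance gives $\mu(x)=\mathrm{Ad}^*_{\exp(t\eta)}\mu(x)$; differentiating at $t=0$ yields $[\mu(x),\eta]=0$, i.e.\ $\sqrt{-1}\mu(x)$ lies in the centre of $(\mathfrak{k}_x)^c$. Because $\sqrt{-1}\mu(x)$ lies in the subalgebra $(\mathfrak{k}^c)_x$, the operator $\mathrm{ad}(\sqrt{-1}\mu(x))$ preserves $(\mathfrak{k}^c)_x$; and since $\mu(x)\in\mathfrak{k}$ makes $\mathrm{ad}(\mu(x))$ skew, this operator is semisimple with real eigenvalues, giving $(\mathfrak{k}^c)_x=\bigoplus_\lambda\mathfrak{k}^c_\lambda$.

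The crux is the sign computation that identifies the pieces. Take an eigenvector $Y=\xi+\sqrt{-1}\eta$ with $[\sqrt{-1}\mu(x),Y]=\lambda Y$; comparing real and imaginary parts gives $[\mu(x),\xi]=\lambda\eta$, and $\mathrm{ad}$-invariance then yields $\langle\mu(x),[\xi,\eta]\rangle=\lambda\|\eta\|^2$. On the other hand the gradient identity together with the stabiliser relation $\xi_Z(x)=-J\eta_Z(x)$ gives $\langle\mu(x),[\xi,\eta]\rangle=g(J\eta_Z,\xi_Z)|_x=\pm\|\eta_Z(x)\|^2$. Hence $\lambda\|\eta\|^2=\pm\|\eta_Z(x)\|^2$, so every eigenvalue occurring in $(\mathfrak{k}^c)_x$ is of one fixed sign (positive in the convention of the statement), and $\lambda=0$ forces $\eta_Z(x)=0$, whence $\xi_Z(x)=0$ and $Y\in(\mathfrak{k}_x)^c$. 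This simultaneously pins down $\mathfrak{k}^c_0=(\mathfrak{k}_x)^c$ and excludes the opposite sign, yielding the claimed decomposition; the last assertion is then immediate, since $\mu(x)=0$ makes $\mathrm{ad}(\sqrt{-1}\mu(x))$ vanish, so all of $(\mathfrak{k}^c)_x$ is the zero-eigenspace $(\mathfrak{k}_x)^c$. I expect the main obstacle to be exactly this final paragraph: not the individual computations, but recognising that pairing the equivariance value $\lambda\|\eta\|^2$ against the K\"ahler gradient identity produces the sign-definite quantity $\pm\|\eta_Z(x)\|^2$, which is what forces both the identification of the zero-eigenspace and the one-sidedness of the spectrum.
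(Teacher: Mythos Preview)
The paper does not actually prove this proposition: it is quoted from X.~Wang \cite{xwang04} without argument, followed only by the remark that it is the finite-dimensional model of Calabi's decomposition theorem and that the underlying Hessian computation was isolated by L.~Wang \cite{Lijing06}. So there is no in-paper proof to compare against.

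Your argument is correct and is the standard one found in the cited reference. The chain of reasoning---criticality of $|\mu|^2$ on the $K^c$-orbit forces $\mu(x)_Z(x)=0$; equivariance then makes $\mu(x)$ central in $\mathfrak{k}_x$; skewness of $\mathrm{ad}(\mu(x))$ gives a real-eigenvalue decomposition of $(\mathfrak{k}^c)_x$ under $\mathrm{ad}(\sqrt{-1}\mu(x))$; and pairing the eigenvalue relation $[\mu(x),\xi]=\lambda\eta$ against the identity $\langle\mu(x),[\xi,\eta]\rangle=\pm\kappa(\xi_Z,\eta_Z)|_x$ together with the stabiliser condition $\xi_Z(x)=-J\eta_Z(x)$ yields $\lambda\|\eta\|^2=\pm\|\eta_Z(x)\|^2$---is exactly the expected route. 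Your hedging on the $\pm$ is harmless: once the conventions for $\kappa$, $J$, $g$ and the action are fixed the sign is determined, and you have correctly identified that this single computation both pins down $\mathfrak{k}^c_0=(\mathfrak{k}_x)^c$ and forces one-sidedness of the spectrum. One small point worth making explicit: for a nonzero eigenvector $Y=\xi+\sqrt{-1}\eta$ with $\lambda\neq 0$ one automatically has $\eta\neq 0$, since $[\sqrt{-1}\mu(x),\xi]=\lambda\xi$ would otherwise force $\lambda\xi\in\mathfrak{k}\cap\sqrt{-1}\mathfrak{k}=0$; this justifies dividing by $\|\eta\|^2$ in the sign step.
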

This is a finite dimensional model of Calabi's decomposition Theorem \ref{decomp}. Calabi's original proof
is given by a Hessian formula of the Calabi functional, the square $L^2$ norm of the scalar curvature, but
L.Wang \cite{Lijing06} gave a finite dimensional model argument of the Hessian formula. This formal argument
is useful since it made us possible to obtain similar decomposition theorem for other geometric nonlinear 
problems which have moment map interpretation, see Theorem \ref{reductiveness} below.

Now we turn to Donaldson-Fujiki picture where $Z$ is an infinite dimensional K\"ahler manifold which we now define.
In the usual study of K\"ahler geometry beginning from Calabi, the complex structure on a compact complex manifold $M$ is fixed, 
some K\"ahler class $[\omega]$ 
of a K\"ahler form $\omega$ is also fixed, and then one tries to find a canonical metric in the K\"ahler class $[\omega]$.
However, in view of Moser's theorem one may fix a symplectic for $\omega$, and 
consider the set of $\omega$-compatible complex structures $J$. The space of such $J$
is our $Z$ in this picture.
Here, we say that $J$ is compatible with $\omega$ if
$$ \omega(JX, JY) = \omega(X,Y), \quad \omega(X,JX) > 0$$
are satisfied for all $X,\ Y \in T_pM$. Therefore, for each $J \in Z$, 
the triple $(M, \omega, J)$ is a K\"ahler manifold. In this situation the tangent space of $Z$ at $J$ is a subspace of the space  $\mathrm{Sym}^2(T^{\ast 0,1}M)$ 
of symmetric tensors of type $(0,2)$, and the natural $L^2$-inner product on $\mathrm{Sym}^2(T^{\ast 0,1}M)$ gives 
$Z$ a K\"ahler structure.

We assume $\dim_{\bfR}M = 2m$. 
The set of all smooth functions $u$ on $M$ with
$$ \int_M u\, \dvol = 0 $$
is a Lie algebra with respect to the Poisson bracket in terms of $\omega$. 
Denote this Lie algebra by ${\mathfrak k}$ and let  $K$  be its Lie group. Namely $K$ is a subgroup of the group
of symplectomorphisms generated by Hamiltonian diffeomorphisms.  $K$ acts on the K\"ahler manifold $Z$ as
holomorphic isometries.
\begin{theorem}[Donaldson-Fujiki]\label{FujDon} Let $S_J$ be the scalar curvature of the K\"ahler manifold $(M, \omega_0, J)$
and let $\mu : Z \to {\mathfrak k}^{\ast}$ be the map given by
$$ <\mu(J), u> = \int_M S_J\, u\, \omega^m $$
where $ u \in \mathfrak k$. Then $\mu$ is a moment map for the action of  $K$ on $Z$.
\end{theorem}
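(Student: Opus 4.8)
The plan is to verify the two defining properties of a moment map: $K$-equivariance of $\mu$, and the infinitesimal identity $d\mu_u=\iota(Y_u)\Omega_Z$ for every $u\in\K$, where $\Omega_Z$ is the $L^2$ K\"ahler form on $Z$ and $Y_u\in T_JZ$ is the fundamental vector field of the action. First I would pin down the ingredients. A function $u\in\K$ determines a Hamiltonian vector field $X_u$ by $\iota(X_u)\omega=du$, whose flow consists of symplectomorphisms and hence preserves $Z$; the infinitesimal action of $u$ on $Z$ is therefore the Lie derivative $Y_u:=\Lr_{X_u}J\in T_JZ$. Recall that $T_JZ$ consists of endomorphisms $A$ of $TM$ anticommuting with $J$ for which $\omega(A\cdot,\cdot)$ is symmetric, that the complex structure on $Z$ sends $A\mapsto JA$, and that the natural $L^2$ metric $g_Z$ on $T_JZ$ gives the K\"ahler form $\Omega_Z(A,B)=g_Z(JA,B)$.

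Equivariance is the easy half. For $\varphi\in K$ the triple $(M,\omega,\varphi_*J)$ is biholomorphic and isometric to $(M,\omega,J)$ via $\varphi$, so the scalar curvatures satisfy $S_{\varphi_*J}=S_J\circ\varphi^{-1}$; since $\varphi$ preserves $\omega^m$ and $K$ acts on $\K$ by pushforward of Hamiltonians, a change of variables in $\int_M S_{\varphi_*J}\,u\,\omega^m$ yields $\mu(\varphi_*J)=\mathrm{Ad}^\ast_\varphi\mu(J)$, which is exactly $K$-equivariance under the identification $\K\cong\K^\ast$ given by the $L^2$ pairing.

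The substance is the infinitesimal identity. Differentiating along a path $J_t$ with $\dot J=A\in T_JZ$ gives
$$\langle d\mu_u(J),A\rangle=\int_M (D_JS)(A)\,u\,\omega^m,$$
where $D_JS$ is the linearization of the scalar curvature at fixed $\omega$. I would compute $D_JS$ explicitly as a differential operator in $A$ using the K\"ahler structure, then integrate by parts to move all derivatives off $A$ and onto $u$, producing the formal adjoint $(D_JS)^\ast u$. The target, up to the fixed constant relating $\omega^m$ and $\dvol$, is the identity
$$\int_M (D_JS)(A)\,u\,\omega^m=\Omega_Z(Y_u,A)=g_Z\big(J\,\Lr_{X_u}J,\,A\big).$$
The cleanest route is to express $\Lr_{X_u}J$ through the Lichnerowicz-type operator $\D u:=\barpartial\,\mathrm{grad}^{1,0}u$: under the identification of $T_JZ$ with $(0,1)$-forms valued in $T^{1,0}M$, the deformation $\Lr_{X_u}J$ is governed by $\D u$, while the pairing $\int_M(D_JS)(A)\,u\,\omega^m$ integrates by parts into the $L^2$ inner product of $A$ with the same $\D u$ (twisted by the complex structure on $Z$). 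Matching the two recovers the moment map equation.

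The main obstacle is precisely this core computation: correctly linearizing the scalar curvature under a deformation of $J$ with $\omega$ held fixed, and carrying out the integration by parts so that every term reassembles into $\Lr_{X_u}J$. The delicate points are keeping track of the identifications between variations of $J$, $(0,1)$-forms valued in $T^{1,0}M$, and symmetric $2$-tensors, and verifying that the fourth-order operator $\D^\ast\D$ appearing as $(D_JS)^\ast$ is the \emph{self-adjoint} operator whose associated deformation is the Hamiltonian one. This self-adjointness is exactly what guarantees that the one-form $u\mapsto\int_M(D_JS)(A)\,u\,\omega^m$ is not merely closed but equals $\iota(Y_u)\Omega_Z$, making $\mu$ a genuine moment map.
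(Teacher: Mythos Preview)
The paper does not actually prove this theorem: it is stated as the Donaldson--Fujiki result and attributed to them, with no argument given beyond recording the equivalent moment map formula
\[
\left.\frac{d}{dt}\right|_{t=0}\langle\mu(J_t),u_X\rangle = (J\,L_XJ,\,\dot J)_{L^2}
\]
immediately after the statement. So there is nothing to compare your proposal against on the level of proof strategy.

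That said, your outline is the standard route and is correct. The equivariance step is fine as written. For the infinitesimal identity, the paper's displayed formula is exactly the target you wrote down, namely $\Omega_Z(Y_u,A)=g_Z(J\,\Lr_{X_u}J,A)$, and your plan of linearizing $S_J$ at fixed $\omega$, integrating by parts, and recognizing the resulting operator as the Lichnerowicz operator $\D=\barpartial\,\mathrm{grad}^{1,0}$ (so that $(D_JS)^\ast u$ pairs with $A$ through $\D u$, which in turn encodes $\Lr_{X_u}J$) is precisely how the identity is established in the literature. One small caution: in your last paragraph you speak of the ``self-adjointness'' of $\D^\ast\D$ as the crux, but that operator is automatically self-adjoint; the genuine content is the pair of identifications $(D_JS)^\ast u \leftrightarrow \D u$ and $\Lr_{X_u}J \leftrightarrow \D u$ under the isomorphism $T_JZ \cong \Omega^{0,1}(T^{1,0}M)$, together with the observation that the complex structure $A\mapsto JA$ on $T_JZ$ corresponds to multiplication by $i$ on $\Omega^{0,1}(T^{1,0}M)$. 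Once those bookkeeping points are nailed down the computation goes through.
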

\noindent
Thus, $\mu^{-1}(0)$ is identified with the set of cscK metrics,
and in view of Kempf-Ness theorem, the cscK problem should be a GIT stability issue.
A demerit of this picture is that there is no complexification of the Hamiltonian diffeomorphisms group $K$.
However there is a complexification $\mathfrak k \otimes \bfC$ of $\mathfrak k$. 
If $X$ is a Hamiltonian vector field and $u_X$ its Hamiltonian function, we have
$$L_{JX}\omega = i\partial\barpartial u_X,$$
and thus 
$K^c$-orbit can be considered as the K\"ahler class.

The fact that $\mu$ is a moment map for the action of  $K$ on $Z$ is equivalent to the equation
\begin{equation}\label{mmeq}
\left.\frac{d}{dt}\right|_{t=0}\langle\mu(J_t),u_X\rangle = (JL_XJ, \dot{J})_{L^2} .
\end{equation}
We call this the moment map formula. This formula shows that if $L_XJ = 0$, that is, $X$ is a holomorphic vector field then
the derivative with respect to $J$ of
$$ f(X) =  - \int_M S_J\, u\, \omega_0^m$$
vanishes, and thus $f(X)$ is an invariant independent of $J$, giving an alternative proof of Theorem \ref{W4}.

Note that Theorem \ref{FujDon} or equivalently the equation \eqref{mmeq} also implies that $J$ is a critical point of the Calabi energy
$$ J \mapsto \int_M |S_J|^2 \omega^m $$
if and only if $(M, J, \omega_0)$ is an extremal K\"ahler manifold. This can be seen by taking $u=S_J$.
Using the formal argument of L.Wang, we can give an alternative proof of Calabi's decomposition theorem \ref{decomp}, see \cite{futaki07.1}.


\subsection{Asymptotic Chow semi-stability, balanced embeddings and constant scalar curvature K\"ahler metrics}
Chow stability of  a polarized manifold $(M,L)$ is defined in terms of the stability of the Chow point,
but there is an equivalent description in terms of balanced condition originally due to Luo \cite{Luo}, see also \cite{phongsturm03}.
This balanced condition is already appeared in subsection 2.4. Recall that for a Hermitian metric  $h$ of $L$ 
with its curvature $\omega_h := -\frac{i}{2\pi}\partial\barpartial \log h$ positive,
$s_1,\ \ldots, s_{N_k}$ be an orthonormal basis of $H^0(M,L^k)$ with respect to the $L^2$ inner 
product induced by $h$ and the K\"ahler form $\omega_h$ we defined the Bergman function $\rho_k : M \to \bfR$ by
$\rho_k (x) = \sum_{i=1}^{N_k} || s_i(x)||_{h^k}$. We say that $h^k$ is a balanced metric 
if $\rho_k$ is a constant function. In this case the Kodaira embedding using the orthonormal basis 
 is said to be a balanced embedding. Note that this condition of balanced embedding is equivalent to saying 
 that $(M,\omega) \to (\bfC\bfP^{N_k-1},\omega_{FS})$
is an isometric embedding where $\omega_{FS}$ is the Fubini-Study metric. 
We also say that $(M,L^k)$ is balanced if there is a balanced metric. 
Then $(M,L^k)$ is Chow semistable if and only if
$(M,L^k)$ admits a balanced embedding, Chow stable if it is Chow semistable and the automorphism group of $(M,L^k)$ is finite.
The polarized manifold $(M,L)$ is said to be asymptotically Chow stable (resp. semistable) if for some $\ell$ sufficiently large, 
$(M, L^k)$ is Chow stable (resp. semistable) for all $k \ge \ell$. 

Using the asymptotic expansion Theorem \ref{theor:asymptbergmann} of the Bergman function 
Donaldson \cite{donaldson01} proved the following. 
Let $(M,L)$ be a polarized manifold and suppose that $\mathrm{Aut}(M,L)$ is discrete. If there exists a
constant scalar curvature K\"ahler form in $c_1(L)$ then
\begin{enumerate}
\item $(M,L)$ is asymptotically stable, and thus for each $k$ a balanced metric of $L^k$ 
exists for each $k$, and
\item 
 as $k \to \infty$ the balanced metrics converge to the constant scalar curvature K\"ahler metric. 
\end{enumerate}
This theorem of Donaldson suggests one to try to show the 
existence of a constant scalar curvature K\"ahler metric by using a sequence of balanced metrics.
However the  following result (\cite{Fut}) of the first author shows that when $\mathrm{Aut}(M,L)$ is not discrete it is not always
possible to choose balanced metrics.

Let $I^k(G)$ denote the set of all $G$-invariant polynomials of degree $k$:
$$
I^k(G) = \{ \phi : \mathrm{Sym}^k(\mathfrak g) \to \bfC\ |\ \phi\circ\operatorname{Ad}(g) = \phi \ \text{for any}\  g \in G \}.
$$
We define ${\mathcal F}_{\phi}(X)$ for $\phi \in I^k(G)$ and $X \in \mathfrak h$ by 
\begin{eqnarray}
{\mathcal F}_{\phi}(X) &=& (m-k+1) \int_M \phi(\Theta) \wedge u_X\,\omega^{m-k}
\nonumber
\\ & & + \int_M \phi(\theta(X) + \Theta) \wedge \omega^{m-k+1}.\label{family}
\end{eqnarray}
Then it is shown in \cite{Fut} that ${\mathcal F}_{\phi}(X)$ is independent of the choices of the connection $\theta$ of 
type $(1,0)$ on $P_G$ and of the K\"ahler form $\omega \in \Omega$ on $M$.
In particular $\mathcal F_\phi : \mathfrak h \to \bfC$ is a Lie algebra homomorphism.
If we take $\phi$ to be the $k$-th Todd polynomial $Td^{(k)}$ then 
$\mathcal F_{Td^{(k)}}$, $k=1, \ldots, m$ are obstructions for a polarized manifold $(M,L)$ to asymptotic Chow semistability:
\begin{theorem}[\cite{Fut}]\label{Chow5} If a polarized manifold $(M, L)$ is asymptotically Chow
semistable then for $1 \le \ell \le m$ we have
\begin{equation*}
{\mathcal F}_{Td^{(\ell)}}(X) = 0.
\end{equation*}
In particular, in the case of $\ell=1$ this implies $f(X) = 0$.
\end{theorem}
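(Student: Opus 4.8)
The plan is to reduce to a one-parameter subgroup, express the Hilbert dimension $d_k=\dim H^0(M,L^k)$ and the total weight $w_k$ through equivariant Riemann--Roch, and then read off the invariants ${\mathcal F}_{Td^{(\ell)}}(X)$ as the coefficients of $w_k$. Since each ${\mathcal F}_{Td^{(\ell)}}$ is $\bfC$-linear in $X$ (it is a Lie algebra homomorphism), it suffices to prove the vanishing when $\sqrt{-1}X$ generates an $S^1$-action lifting to $L$, i.e. a $\bfC^\ast$-action on $(M,L)$: such elements are Zariski-dense in the relevant part of the Lie algebra, and the general case then follows by linearity.

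First I would set up the two polynomials. Fixing a lift of the $\bfC^\ast$-action to $L$ and normalizing the Hamiltonian by $\int_M u_X\,\omega^m=0$, equivariant Riemann--Roch expresses $d_k$ and $w_k$ as the degree-$0$ and degree-$1$ coefficients in $t$ of $\int_M e^{k(\omega+tu_X)}Td(\tfrac{\sqrt{-1}}{2\pi}(tL(\sqrt{-1}X)+\Theta))$, exactly as in the computation of $F_1$ in Subsection~\ref{sect:YTDconjecture}. Expanding $e^{ktu_X}$ and linearizing the Todd class in $t$, the coefficient of $t^1$ splits into a term carrying $u_X$ and a term carrying the linearization of $Td^{(\ell)}$ along $L(\sqrt{-1}X)$. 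Collecting powers of $k$, the coefficient $b_\ell$ of $k^{m+1-\ell}$ in $w_k=\sum_\ell b_\ell k^{m+1-\ell}$ equals, up to the universal constant $1/(m-\ell+1)!$, the two-term expression $(m-\ell+1)\int_M Td^{(\ell)}(\Theta)\wedge u_X\omega^{m-\ell}+\int_M Td^{(\ell)}(\theta(X)+\Theta)\wedge\omega^{m-\ell+1}={\mathcal F}_{Td^{(\ell)}}(X)$ of \eqref{family}. In particular $b_0=\frac{1}{m!}\int_M u_X\,\omega^m=0$ by our normalization, and (as already checked for $F_1$) $b_1$ is a nonzero multiple of $f(X)$.

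Next I would bring in Chow semistability through Mumford's weight formula. Embedding $M\hookrightarrow\bfP(H^0(M,L^k)^\ast)$ by $|L^k|$, the one-parameter subgroup generated by $X$, made trace-free in $\mathfrak{sl}(H^0(M,L^k))$, acts on $H^0(M,L^{pk})$ with total weight $W_k(p)=w_{pk}-p\frac{w_k}{d_k}d_{pk}$, and the Chow weight at level $k$ is the coefficient $e_k(X)$ of $p^{m+1}$ in $W_k(p)$. Substituting the expansions of $w_\bullet,d_\bullet$ gives $e_k(X)=b_0k^{m+1}-a_0k^m\frac{w_k}{d_k}$, so that
\begin{equation*}
d_k\,e_k(X)=\sum_{i=1}^{m+1}(b_0a_i-a_0b_i)\,k^{2m+1-i},
\end{equation*}
a genuine polynomial in $k$ of degree $2m$, with $a_i$ the curvature coefficients of $d_k$. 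Chow semistability of $(M,L^k)$ forces a sign condition, say $e_k(X)\ge0$, and applying the same to $-X$ (whose action is the inverse one-parameter subgroup) gives $e_k(X)\le0$; hence $e_k(X)=0$ for every sufficiently large $k$. A polynomial vanishing for infinitely many $k$ is identically zero, so all coefficients vanish: $b_0a_i-a_0b_i=0$ for $1\le i\le m+1$.

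Finally I would extract the invariants. With the normalization $b_0=0$, the relations $b_0a_i-a_0b_i=0$ collapse to $a_0b_i=0$, hence $b_i=0$ for $1\le i\le m+1$; by the identification of the second paragraph this is exactly ${\mathcal F}_{Td^{(\ell)}}(X)=0$ for $1\le\ell\le m$. For $\ell=1$, $b_1$ being a nonzero multiple of $f(X)$ yields $f(X)=0$. The main obstacle is the precise identification of the Chow weight $e_k(X)$ with the polynomial above: one must justify that Mumford's Chow weight equals the leading $p$-coefficient of the normalized weight $W_k(p)$, and that $d_k\,e_k(X)$ is independent of the chosen lift of the action (so that passing to the normalization $b_0=0$ is legitimate). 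The equivariant Riemann--Roch bookkeeping matching $b_\ell$ with ${\mathcal F}_{Td^{(\ell)}}(X)$ is then routine, but the constants must be tracked carefully.
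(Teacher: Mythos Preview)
The paper does not supply a proof of this theorem; it is quoted from \cite{Fut}, and the only argument given in the surrounding text is the one-line observation that $Td^{(1)}=c_1/2$, so that $\mathcal{F}_{Td^{(1)}}$ coincides up to a positive constant with $f_{c_1^{m+1}}$ in \eqref{futaki1}, which yields the final sentence of the statement.

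Your sketch is the argument of \cite{Fut} and is correct. The essential points are exactly the ones you isolate: (i) equivariant Riemann--Roch identifies the coefficient $b_\ell$ of $k^{m+1-\ell}$ in $w_k$, under the normalization \eqref{eq3}, with a nonzero universal multiple of $\mathcal{F}_{Td^{(\ell)}}(X)$; (ii) since the $\bfC^\ast$-action comes from $\mathrm{Aut}(M,L)$, the Chow point of the $k$-th embedding is \emph{fixed}, so the Hilbert--Mumford inequality for semistability applied to both $X$ and $-X$ forces the Chow weight $e_k(X)$ to vanish for every sufficiently large $k$; (iii) the quantity $d_k\,e_k(X)=\sum_i(b_0a_i-a_0b_i)k^{2m+1-i}$ is independent of the lift (your check that $b_\ell\mapsto b_\ell+ca_\ell$ under $u_X\mapsto u_X+c$ cancels), is a genuine polynomial for large $k$, and hence vanishes identically; with $b_0=0$ and $a_0=\mathrm{vol}(M)\ne 0$ this gives $b_\ell=0$ for $1\le\ell\le m$. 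The two caveats you flag---the identification of the Chow weight with the leading $p$-coefficient of the trace-free total weight $W_k(p)$, and the lift-independence of $d_k e_k(X)$---are precisely the technical points one must verify, and both are standard.
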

\noindent
The last statement follows since $Td^{(1)} = c_1/2$ and ${\mathcal F}_{Td^{(1)}}$ coincides with $f_{c_1^{m+1}}$ in \eqref{futaki1} up to 
positive constant.

An example of toric K\"ahler-Einstein manifold satisfying $Td^{(k)} \ne 0$ for some $k$ was suggested by Nill-Paffenholz \cite{NillPaffen}.
The computation of $Td^{(k)} \ne 0$ was undertaken by Ono-Sano-Yotsutani \cite{OSY09}. 
In fact, it turns out that ${\mathcal F}_{Td^{(1)}}(X) = 0$ (because it is K\"ahler-Einstein)
but for $\ell \ge 2$ we have ${\mathcal F}_{Td^{(\ell)}}(X) \ne 0$.

Further, Della Vedova-Zuddas \cite{DVZ10} gave an example 
of a compact K\"ahler surface with constant scalar curvature K\"ahler form belonging to
an integral class which is asymptotically Chow unstable.

More recently Sano and Tipler \cite{SanoTipler17}
showed if there exists an extremal K\"ahler metric in $c_1(L)$ of a polarized
manifold $(M,L)$, there is a $\sigma$-balanced metric for some $\sigma \in \mathrm{Aut}(M,L)$ for each $k$,
and the sequence of $\sigma$-balanced metric converges to the extremal K\"ahler metric
where $\sigma$-balanced metric is defined by
$$ \Phi_k^\ast \omega_{FS} = \sigma^\ast \omega_h,$$
$\Phi_k$ is the Kodaira embedding by $L^2(h)$ basis as before.


\section{Cahen-Gutt moment map and closed Fedosov star product} \label{sect:CGmomentclosed}
Let $(M,\omega)$ be a compact symplectic manifold. 
In subsection \ref{sect:momenttrace} we defined $\mathcal E(M,\omega)$ to be the space of all symplectic connections,
the symplectic form $\Omega^{\mathcal E}$ on $\mathcal E(M,\omega)$, and the Cahen-Gutt moment map 
$\mu : \mathcal E(M,\omega) \to C^\infty(M)$ with respect to the Hamiltonian group action, see Theorem \ref{theor:momentE}.

Now we assume that $M$ is a compact K\"ahler manifold and that $\omega$ is a fixed symplectic form. We have set $Z$ in 
subsection \ref{GITmoment} to be 
$$ Z = \{ J\ \text{integrable complex\ structure}\ |\ (M,\omega, J)\ \text{is\ a\ K\"ahler\ manifold}\}.$$
The second author considered in \cite{LLF}, \cite{La Fuente-Gravy 2016_2} the {\it Levi-Civita map}
$lv : Z \to \mathcal E(M,\omega)$ sending $J$ to the Levi-Civita connection $\nabla^J$
of the K\"ahler manifold $(M,\omega,J)$. The pull-back of the Cahen-Gutt moment map is given by:
\begin{equation*}
(lv^*\mu)(J)=2\Delta^J S_J + P(\nabla^J),
\end{equation*}
for $S_J$ being the scalar curvature of the K\"ahler manifold $(M,\omega,J)$ and $\Delta^J:=(g^J)^{\alpha \bar{\beta}}\partial_{\alpha}\partial_{\bar{\beta}}$ where $g^J(\cdot,\cdot):=\omega(\cdot,J\cdot)$.

In \eqref{eq:Liedernabla}, $\Lr_{X_f}\nabla$ is expressed as 
\begin{equation*}\label{Cahen-Gutt2}
\underline{\Lr_{X_f}\nabla} = (\omega_{uv}R^v_{tsq}X_f^s + \nabla_q\nabla_u X_f^s\,\omega_{st})\,dx^q \otimes dx^u \otimes dx^t
\end{equation*}
in real coordinates, 
if we choose local holomorphic coordinates $z^1, \cdots, z^m$ then
it is expressed as 
\begin{eqnarray}\label{infinitesimal}
\underline{\Lr_{X_f}\nabla^J} &=& 
 f_{ijk} dz^i \otimes dz^j \otimes dz^k + f_{\bari\barj\bark} dz^\bari \otimes dz^\barj \otimes dz^\bark \\
&& + f_{ij\bark} dz^i \otimes dz^j \otimes dz^\bark + f_{\bari\barj k} dz^\bari \otimes dz^\barj \otimes dz^k \nonumber\\
&& + f_{ik\barj} dz^i \otimes dz^\barj \otimes dz^k + f_{\bari\bark j} dz^\bari \otimes dz^j \otimes dz^\bark \nonumber\\
&& + f_{jk\bari} dz^\bari \otimes dz^j \otimes dz^k + f_{\barj\bark i} dz^i \otimes dz^\barj \otimes dz^\bark \nonumber
\end{eqnarray}
where the lower indices of $f$ stand for the covariant derivatives, e.g. $f_{ij\bark} = \nabla_\bark \nabla_j \nabla_i f$,
see \cite{FO_CahenGutt}.

Since the terms in the right hand side of \eqref{infinitesimal} are pointwise linearly independent, we obtain the following.
\begin{prop}\label{nondegenerate}For a real smooth function $f$, 
$L_{X_f}\nabla^J = 0$ if and only if $L_{X_f}J = 0$. In this case, $X_f$ is a holomorphic Killing vector field.
\end{prop}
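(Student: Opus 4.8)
The plan is to prove the two implications separately, treating the reverse direction conceptually and reserving the analytic work for the forward one. Throughout I would use that, by \eqref{infinitesimal}, the symmetric tensor $\underline{L_{X_f}\nabla^J}$ splits into eight pointwise linearly independent holomorphic/antiholomorphic pieces whose coefficients are the third covariant derivatives $f_{ijk},f_{\bari\barj\bark},f_{ij\bark},\ldots$ of $f$; hence $L_{X_f}\nabla^J=0$ is equivalent to the simultaneous vanishing of all eight. I would also record the elementary fact that, for a real function $f$, the Hamiltonian field is $X_f=-\sqrt{-1}\,\mathrm{grad}^{1,0}f$ modulo its conjugate, so that $L_{X_f}J=0$ is equivalent to the vanishing of the $(0,2)$-part of the complex Hessian, i.e. to $f_{\bari\barj}=\nabla_\barj\nabla_\bari f=0$ (equivalently $f_{ij}=0$, by reality).

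For $L_{X_f}J=0\Rightarrow L_{X_f}\nabla^J=0$, together with the final assertion, I would argue by naturality of the Levi--Civita connection. Since $X_f$ is Hamiltonian it preserves $\omega$; if in addition $L_{X_f}J=0$ it preserves $g(\cdot,\cdot)=\omega(\cdot,J\cdot)$, so $X_f$ is a Killing field that is moreover (real) holomorphic. As $\nabla^J$ is uniquely determined by $g$, it is preserved by the flow of any Killing field, giving $L_{X_f}\nabla^J=0$ and at the same time the statement that $X_f$ is a holomorphic Killing vector field. As a cross-check one may instead observe directly that $f_{ij}=0$ forces every coefficient in \eqref{infinitesimal} to vanish, since each is a covariant derivative of the identically vanishing tensors $f_{ij}$ and $f_{\bari\barj}$.

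The substantive direction is $L_{X_f}\nabla^J=0\Rightarrow L_{X_f}J=0$, and here the main obstacle is that the hypothesis provides only third-order information while the conclusion is a second-order condition. To bridge this I would use compactness of $M$ and integrate by parts. From linear independence the mixed coefficient $f_{ij\bark}=\nabla_\bark\nabla_j\nabla_i f$ vanishes identically. I would then consider the $L^2$-norm of the $(2,0)$-Hessian and compute
\[
\int_M g^{i\barp}g^{j\barq} f_{ij}\,f_{\barp\barq}\,\dvol
= -\int_M g^{i\barp}g^{j\barq} f_{ij\barq}\,f_{\barp}\,\dvol = 0 ,
\]
the first equality by integration by parts in the $\barq$ direction (using that $g^{i\barp}$ is parallel and $M$ is closed) and the second because $f_{ij\barq}=0$. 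Since the left-hand integrand is a nonnegative pointwise norm, this forces $f_{ij}\equiv 0$, equivalently $f_{\bari\barj}\equiv 0$, which is exactly $L_{X_f}J=0$; combined with the first part, $X_f$ is then holomorphic Killing. I expect the only delicate points to be fixing conventions so that the coefficients in \eqref{infinitesimal} are genuinely the clean third covariant derivatives of $f$ (this identification, with the attendant curvature bookkeeping, being the content of the computation from \cite{FO_CahenGutt}) and checking that the divergence term in the integration by parts vanishes.
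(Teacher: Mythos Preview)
Your proposal is correct and aligned with the paper's approach. The paper's own proof consists of a single sentence pointing to the pointwise linear independence of the eight tensor types in \eqref{infinitesimal}; this immediately gives that $L_{X_f}\nabla^J=0$ is equivalent to the simultaneous vanishing of all the third covariant derivatives $f_{ijk},f_{ij\bark},\ldots$, but the paper leaves implicit the passage from this third-order information to the second-order condition $f_{ij}=0$ (i.e.\ $L_{X_f}J=0$). You correctly identify this as the substantive step and supply the missing ingredient: the $L^2$ integration-by-parts on the closed manifold, using only $f_{ij\barq}=0$, forces $f_{ij}\equiv 0$. Your treatment of the reverse implication via naturality of the Levi--Civita connection (and the cross-check through $f_{ij}=0$) is also fine. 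The caveats you flag---confirming that the coefficients in \eqref{infinitesimal} are genuinely the clean third covariant derivatives (curvature bookkeeping, deferred to \cite{FO_CahenGutt}) and that the boundary term in the integration by parts vanishes on the closed K\"ahler manifold with $g^{i\barp}$ parallel---are exactly the right points to verify, and both hold.
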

Hence from the moment map formula \eqref{eq:momentmu}, the above Proposition \ref{nondegenerate} and Theorem \ref{Fedosov density},
we obtain the following Theorem.
We consider $\mathfrak h_{\mathbf R}$ consisting of vector fields $X$ such that $\mathrm{grad}^{(1,0)}f \in \h_{red}$ for some real smooth function, normalised by $\int_Mf\ \omega^m=0$.
\begin{theorem}[\cite{La Fuente-Gravy 2016_2}]\label{Futaki}
Let $(M, \omega)$ be a compact K\"ahler manifold, and $\mathfrak h_{\mathbf R}$ be the real reduced Lie algebra of holomorphic vector fields. 
Then 
$$ \mathrm{Fut}(\mathrm{grad}^{(1,0)}f) := \int_M \mu(\nabla^J)\, f\ \omega^m $$
is independent of the choice of $J \in \mathcal J(M,\omega)$. If $\mathrm{Fut} \ne 0$ then there is no K\"ahler metric 
for which the Fedosov star product $\ast_{\nabla,0}$ is closed.
\end{theorem}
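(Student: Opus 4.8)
The plan is to prove the two assertions in turn, following the Donaldson--Fujiki template used above for the classical Futaki invariant, but with the scalar-curvature moment map replaced by the Cahen--Gutt moment map $\mu$ and the space $Z = \mathcal J(M,\omega)$ of compatible complex structures mapped into $\mathcal E(M,\omega)$ by the Levi--Civita map $lv$.

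First I would establish independence of $J$. Fix a real function $f$ with $\int_M f\,\omega^m = 0$ and $X = \mathrm{grad}^{(1,0)}f \in \mathfrak h_{red}$, and let $J_t$ be a smooth path in $\mathcal J(M,\omega)$ with $J_0 = J$. Composing with $lv$ produces a curve $\nabla^{J_t} = lv(J_t)$ in the affine space $\mathcal E(M,\omega)$, with velocity $A := \left.\tfrac{d}{dt}\right|_0 \nabla^{J_t} \in \Gamma(S^3T^*M)$. Since $\nabla \mapsto \int_M \mu(\nabla)\,f\,\omega^m$ is a smooth function on the affine space $\mathcal E(M,\omega)$, its derivative along this (a priori nonlinear) curve equals its differential evaluated on the velocity $A$, which is exactly what the Cahen--Gutt moment map equation \eqref{eq:momentmu} of Theorem \ref{theor:momentE} computes; thus, up to the harmless constant $m!$ relating $\omega^m$ and $\dvol$,
\[
\left.\frac{d}{dt}\right|_0 \int_M \mu(\nabla^{J_t})\,f\,\omega^m = m!\,\Omega^{\mathcal E}_{\nabla^J}\!\left(\Lr_{X_f}\nabla^J, A\right).
\]

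The crux is then to show that $\Lr_{X_f}\nabla^J = 0$. Because $f$ is real and $\mathrm{grad}^{(1,0)}f$ is holomorphic, the covariant $(0,2)$-Hessian $f_{\bari\barj}$ vanishes, and hence so does its conjugate $f_{ij}$; differentiating once more kills every one of the eight third-order components appearing in \eqref{infinitesimal}, so $\underline{\Lr_{X_f}\nabla^J} = 0$. Equivalently, this is precisely the content of Proposition \ref{nondegenerate}, since holomorphicity of $\mathrm{grad}^{(1,0)}f$ for real $f$ is equivalent to $L_{X_f}J = 0$. With $\Lr_{X_f}\nabla^J = 0$ the pairing above vanishes for every variation $A$, so the first variation of $\int_M \mu(\nabla^J)f\,\omega^m$ is zero; running this along the connected family of compatible complex structures (equivalently, along the Kähler metrics representing the class) yields that the integral is independent of $J$, so $\mathrm{Fut}$ is well defined. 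I expect this identification $\Lr_{X_f}\nabla^J = 0$, i.e. the non-degeneracy encoded in Proposition \ref{nondegenerate}, to be the main point, exactly as $J L_X J = 0$ is the main point in the scalar-curvature case; the pointwise linear independence of the eight tensor types in \eqref{infinitesimal} is what makes the analogy faithful.

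For the obstruction statement I would argue by contraposition. Suppose some $J \in \mathcal J(M,\omega)$ makes the Fedosov star product $\ast_{\nabla^J,0}$ closed. By Theorem \ref{Fedosov density} closedness forces the trace density to be a formal constant, hence $\mu(\nabla^J) = C$ for some genuine scalar $C \in \R$. Then, using the normalization $\int_M f\,\omega^m = 0$,
\[
\mathrm{Fut}(\mathrm{grad}^{(1,0)}f) = \int_M \mu(\nabla^J)\,f\,\omega^m = C\int_M f\,\omega^m = 0.
\]
Since the left-hand side is independent of $J$ by the first part, the existence of a single closed Fedosov star product of the form $\ast_{\nabla^J,0}$ forces $\mathrm{Fut} = 0$; contrapositively, $\mathrm{Fut}\neq 0$ rules out any such Kähler metric. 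The only care needed here is that $C$ is a constant, so that it pulls out of the integral and meets the vanishing of the normalized $f$, which is exactly what Theorem \ref{Fedosov density} provides.
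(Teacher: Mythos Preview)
Your proposal is correct and follows essentially the same approach as the paper's own argument: the paper derives the theorem directly ``from the moment map formula \eqref{eq:momentmu}, the above Proposition \ref{nondegenerate} and Theorem \ref{Fedosov density},'' which are precisely the three ingredients you invoke, in the same order and with the same logic. The paper also remarks that the original reference \cite{La Fuente-Gravy 2016_2} proves the result by the $J$-fixed, $\omega$-varying argument, whereas both you and the present paper use the $\omega$-fixed, $J$-varying (Donaldson--Fujiki) picture.
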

\noindent
In \cite{La Fuente-Gravy 2016_2}, this theorem was proven by $J$-fixed and $\omega$-varying argument.
It is also shown in \cite{La Fuente-Gravy 2016_2} that the character $\mathrm{Fut}$ coincides with the imaginary part
of $\mathcal F_{\frac{8\pi^2}{(m-1)!}(c_2 - \frac12 c_1^2)}$ in \eqref{family}. Note also that $c_1^2-2c_2 $ is the first Pontrjagin class.

\begin{theorem}[\cite{FO_CahenGutt}]\label{reductiveness}
 Let $M$ be a compact K\"ahler manifold. If there exists a K\"ahler metric with non-negative Ricci curvature such that $\mu(\nabla)$ is constant for the Cahen--Gutt moment map $\mu$ and the Levi-Civita connection $\nabla$ then
 the reduced Lie algebra $\h_{red}$ of holomorphic vector fields is reductive. In particular, if $\h_{red}$
 is not reductive then there is no K\"ahler metric with non-negative Ricci curvature such that the Fedosov star product $\ast_{\nabla,0}$ for the Levi-Civita
 connection $\nabla$ is closed.
\end{theorem}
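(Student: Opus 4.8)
The plan is to exhibit the hypothesis as the vanishing of a moment map and then to run the finite--dimensional decomposition principle of Proposition~\ref{W2} inside the Donaldson--Fujiki picture of Subsection~\ref{GITmoment}. Composing the Cahen--Gutt moment map with the Levi--Civita map $\LC:Z\to\E(M,\omega)$ of Theorem~\ref{Futaki}, one gets $\mu\circ\LC:Z\to C^\infty_0(M)\cong\mathfrak k^\ast$. Since $\LC$ is $\ham(M,\omega)$--equivariant and $\mu$ is an equivariant moment map (Theorem~\ref{theor:momentE}), $\mu\circ\LC$ is a moment map for the action of $K=\ham(M,\omega)$ on $Z$, in complete parallel with Theorem~\ref{FujDon}. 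Because $\mu$ takes values in functions of vanishing integral, the assumption that $\mu(\nabla^J)$ be constant forces $\mu(\nabla^J)=0$, so the given K\"ahler structure $J$ is a \emph{zero} of $\mu\circ\LC$.

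Next I would identify the two isotropy algebras at such a $J$. By Proposition~\ref{nondegenerate}, for a real function $f$ one has $\Lr_{X_f}\nabla^J=0$ iff $\Lr_{X_f}J=0$, so the real isotropy $\mathfrak k_J$ is exactly the algebra of Hamiltonian Killing fields, and its complexification $(\mathfrak k_J)^c$ is reductive. By \eqref{eq2}, together with the identification of $K^c$--orbits with K\"ahler classes in Subsection~\ref{GITmoment}, the complexified isotropy $(\mathfrak k^c)_J$ is the space of Hamiltonian holomorphic vector fields, i.e.\ $\mathfrak h_{red}$. Thus reductiveness of $\mathfrak h_{red}$ is equivalent to the equality $(\mathfrak k^c)_J=(\mathfrak k_J)^c$, which is precisely the content of the model Proposition~\ref{W2} at a zero of the moment map; concretely it asserts that the space of holomorphy potentials is stable under complex conjugation.

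To make the formal argument behind Proposition~\ref{W2} rigorous I would follow Calabi's scheme as in Theorem~\ref{decomp}. Linearising $\mu\circ\LC$ at $J$, in the spirit of the moment map formula~\eqref{mmeq}, yields a self--adjoint elliptic operator $\mathcal L$ of order six whose real kernel is $\mathfrak k_J$, playing the role that the Lichnerowicz operator $\mathcal D^\ast\mathcal D$ plays in the cscK story. Setting $V:=\mathrm{grad}^{1,0}\mu(\nabla^J)$, one decomposes $\mathfrak h_{red}$ into eigenspaces of $\mathrm{ad}(V)$ and must show the zero eigenspace $\mathfrak h_0$ is reductive, i.e.\ $\mathfrak h_0=(\mathfrak k_J)^c$. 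Since $\mu(\nabla^J)$ is constant we have $V=0$ and $\mathfrak h_{red}=\mathfrak h_0$, so the whole theorem reduces to this reductiveness statement.

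I expect the main obstacle to be exactly the reductiveness of $\mathfrak h_0$, equivalently the conjugation--stability of the holomorphy potentials. In the cscK case this is automatic because constancy of $S$ makes $\mathcal D^\ast\mathcal D$ a real operator, so its kernel is conjugation--invariant; constancy of the \emph{Cahen--Gutt} momentum carries no such immediate consequence, and the obstruction to conjugation--stability is a weighted curvature term. The substitute I would seek is a Bochner--Weitzenb\"ock identity applied to a holomorphy potential $u$, integrating by parts the $(2,0)$--Hessian $\nabla_i\nabla_j u$ so that the failure of conjugation--invariance is measured by an integral quadratic in $u$ with coefficient built from $R_{i\bar j}$. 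The role of the hypothesis $\Ric\geq0$ is to make this form semi--definite, forcing the obstructing term to vanish and each holomorphy potential to split into potentials of Killing fields and their $J$--images; this gives $\mathfrak h_{red}=\mathfrak k_J\oplus J\mathfrak k_J$, hence reductiveness. The delicate points are the explicit sixth--order linearisation $\mathcal L$ and the precise Bochner identity: arranging the curvature term on the side where $\Ric\geq0$ genuinely forces vanishing is where the real work lies. Granting this, the final assertion is immediate, for by Theorem~\ref{Fedosov density} closedness of $\ast_{\nabla,0}$ for the Levi--Civita connection forces $\mu(\nabla)$ to be constant, so under the Ricci hypothesis a non--reductive $\mathfrak h_{red}$ precludes any closed Fedosov star product.
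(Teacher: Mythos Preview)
Your proposal is correct and follows essentially the same approach as the paper (which only sketches the proof, deferring details to \cite{FO_CahenGutt}). Both define a Cahen--Gutt analogue of extremal K\"ahler metrics, use Wang's finite-dimensional Hessian argument (Proposition~\ref{W2}) in the Donaldson--Fujiki picture with $\mu\circ\LC$ as moment map, and invoke the nonnegative Ricci hypothesis precisely to identify the kernel of the modified (sixth-order) Lichnerowicz operator. One small clarification: $\mu\circ\LC$ is a moment map on $Z$ with respect to the pulled-back form $\LC^*\Omega^{\E}$, not the standard K\"ahler form of Theorem~\ref{FujDon}; the condition $\Ric\geq 0$ is exactly what guarantees $\LC^*\Omega^{\E}(\cdot,J\cdot)\geq 0$ (cf.\ the Hessian computation of the $K$-energy at the end of the section), which is the positivity input needed for Wang's formal argument to yield $(\mathfrak k^c)_J=(\mathfrak k_J)^c$.
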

To show this we define Cahen--Gutt version of extremal K\"ahler metrics and prove a similar structure theorem as 
the Calabi extremal K\"ahler metrics.
 The strategy of the proof of the structure theorem for Cahen--Gutt extremal K\"ahler manifold is to use the formal 
finite dimensional argument for the Hessian formula of the squared norm of the moment map
given by Wang \cite{Lijing06}. The merit of Wang's argument is that once the suitable modification of the 
Lichnerowicz operator is made we can apply his formal argument without using the explicit expression of
the modified Lichnerowicz operator. One only has to identify the kernel of the Lichnerowicz operator, up to now we can only do that when the Ricci curvature is non-negative. This strategy has been used previously for perturbed extremal K\"ahler metrics
in \cite{futaki07.1}. 


Denoting by $H_{red}$ the connected Lie group whose Lie algebra is $\h_{red}$.

\begin{theorem} \label{theor:dimension}
Let $(M,\omega,J)$ be a closed K\"ahler manifold with non-negative Ricci curvature such that $\mu(\nabla)$ is constant.
Then, there exists a neighbourhood $U$ of $\omega \in \MOm$ such that if $\omega'\in U$ with $\mu'(\nabla')$ is constant then $\omega'$ lies in the $H_{red}$ orbit of $\omega$.
In particular, if $*_{\nabla,0}$ is closed, the only closed Fedosov star product of the form $*_{\nabla',0}$ for $\omega'\in U$ are isomorphic to $*_{\nabla,0}$. The isomorphism is the pull back by an element of $H_{red}$.
\end{theorem}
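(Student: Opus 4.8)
The plan is to read Theorem~\ref{theor:dimension} as the infinite-dimensional, \emph{local} form of the Kempf--Ness uniqueness of moment-map zeros along a $K^c$-orbit (Proposition~\ref{W2}), transported to the space $\MOm$ of K\"ahler forms in the fixed class $\Theta=[\omega]$. Writing a nearby form as $\omega_\phi:=\omega+i\partial\barpartial\phi$ with $\phi\in C^\infty_0(M)$ small, I would study the nonlinear operator
\begin{equation*}
\mathcal{CG}(\phi):=\mu(\nabla^{J}_{\omega_\phi})-\overline{\mu(\nabla^{J}_{\omega_\phi})},
\end{equation*}
where $\nabla^{J}_{\omega_\phi}$ is the Levi-Civita connection of $(M,\omega_\phi,J)$ and the bar denotes the $\omega_\phi$-average (a topological constant). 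Its zeros are exactly the $\omega'\in\MOm$ with constant Cahen--Gutt momentum. By the formula $(\LC^*\mu)(J)=2\Delta^J S_J+P(\nabla^J)$ this is an elliptic operator of order six in $\phi$, and by naturality of the Levi-Civita connection and of $\mu$ under biholomorphisms the whole $H_{red}$-orbit of $\omega$ lies in $\mathcal{CG}^{-1}(0)$.

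The heart of the argument is the linearization $\mathcal{L}:=D\mathcal{CG}_{0}$ at $\phi=0$. Combining the moment-map identity \eqref{eq:momentmu} with the explicit form \eqref{infinitesimal} of $\underline{\Lr_{X_f}\nabla^J}$, $\mathcal{L}$ is the formally self-adjoint, elliptic modified Lichnerowicz operator of \cite{FO_CahenGutt}, and I expect its kernel to be precisely the space of real holomorphy potentials: if $\mathcal{L}f=0$ then, by Proposition~\ref{nondegenerate}, $\Lr_{X_f}\nabla^J=0$, hence $\Lr_{X_f}J=0$, so $X_f=\mathrm{grad}^{(1,0)}f\in\h_{red}$. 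This would identify $\ker\mathcal{L}$ with the finite-dimensional tangent space $T_\omega(H_{red}\cdot\omega)$. \textbf{This kernel computation is the main obstacle}: as explained after Theorem~\ref{reductiveness}, controlling $\ker\mathcal{L}$ — ruling out spurious kernel directions — is exactly where the non-negativity of the Ricci curvature enters, through the formal Hessian argument of Wang~\cite{Lijing06} for $\|\mu\|^2$ (as in \cite{futaki07.1}); without that hypothesis the kernel cannot presently be pinned down.

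Granting the kernel identification, I would finish by a Lyapunov--Schmidt reduction. Decompose $C^\infty_0(M)=\ker\mathcal{L}\oplus(\ker\mathcal{L})^{\perp}$ orthogonally for the $L^2(\omega)$-pairing, and let $\Pi_0,\Pi_\perp$ be the corresponding projections; self-adjoint ellipticity gives $\mathrm{Im}\,\mathcal{L}=(\ker\mathcal{L})^{\perp}$ and an isomorphism $\mathcal{L}\colon(\ker\mathcal{L})^{\perp}\to(\ker\mathcal{L})^{\perp}$ on suitable Sobolev completions. The implicit function theorem then solves $\Pi_\perp\mathcal{CG}(\psi+h)=0$ uniquely for $h=h(\psi)\in(\ker\mathcal{L})^{\perp}$ depending smoothly on small $\psi\in\ker\mathcal{L}$. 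It remains to solve the finite-dimensional bifurcation equation $\Pi_0\mathcal{CG}(\psi+h(\psi))=0$, and here the orbit saves the day: parametrising $H_{red}\cdot\omega$ near $\omega$ by its $\ker\mathcal{L}$-component $\psi$ (possible since the orbit is tangent to $\ker\mathcal{L}$), each orbit point is a genuine zero of $\mathcal{CG}$, so by uniqueness of $h(\psi)$ it equals $\psi+h(\psi)$ and the bifurcation equation holds identically. Hence every zero of $\mathcal{CG}$ near $0$ has the form $\psi+h(\psi)$ and lies on the orbit, giving a neighbourhood $U$ with $\mathcal{CG}^{-1}(0)\cap U=(H_{red}\cdot\omega)\cap U$, which is the first assertion.

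Finally the star-product statement is formal. If $\omega'\in U$ has constant momentum then $\omega'=\sigma^*\omega$ for some $\sigma\in H_{red}$ acting as a biholomorphism of $(M,J)$; as $\sigma$ is then a K\"ahler isometry $(M,\omega',J)\to(M,\omega,J)$ it carries Levi-Civita connections, $\nabla'=\sigma^*\nabla$. Since Fedosov's construction of $*_{\nabla,0}$ (Theorem~\ref{Fedosov density}, \cite{fed}) depends equivariantly only on the pair $(\omega,\nabla)$, the pullback $\sigma^*$ intertwines the two products, $\sigma^*(F*_{\nabla,0}G)=(\sigma^*F)*_{\nabla',0}(\sigma^*G)$, so $*_{\nabla',0}$ and $*_{\nabla,0}$ are isomorphic via pullback by an element of $H_{red}$, as claimed.
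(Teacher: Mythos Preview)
The paper states Theorem~\ref{theor:dimension} without proof; this is a survey, and no argument is supplied beyond the remark on notation that follows the statement. So there is nothing to compare your proposal against directly.

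That said, your approach is the natural one and is what one would expect: it is the Cahen--Gutt analogue of the local uniqueness argument for cscK (or extremal) metrics going back to LeBrun--Simanca \cite{lebrunsimanca93}. The essential ingredients you isolate are correct --- ellipticity and formal self-adjointness of the linearization $\mathcal L$, identification of $\ker\mathcal L$ with the real holomorphy potentials, and then an implicit-function/Lyapunov--Schmidt argument in which the $H_{red}$-orbit saturates the finite-dimensional bifurcation equation. You are also right that the kernel computation is where the non-negative Ricci hypothesis enters, via the Hessian argument for $\|\mu\|^2$ carried out in \cite{FO_CahenGutt}; once that kernel is established, the rest is standard. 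One small point of phrasing: the step ``$\mathcal L f=0 \Rightarrow \Lr_{X_f}\nabla^J=0$'' is not a consequence of Proposition~\ref{nondegenerate} itself but of the modified Lichnerowicz analysis in \cite{FO_CahenGutt} (Proposition~\ref{nondegenerate} only gives the subsequent equivalence $\Lr_{X_f}\nabla^J=0 \Leftrightarrow \Lr_{X_f}J=0$), but you acknowledge this in the next sentence, so the logic is sound.
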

\noindent
Note that when we wrote $\mu'(\nabla')$ we mean the Cahen-Gutt moment map $\mu'$ on $\E(M,\omega')$. Similarly for the Fedosov star product, $*_{\nabla',0}$ is a deformation quantization of the symplectic manifol $(M,\omega')$.


Attempting to formulate K-stability for the existence problem of K\"ahler metrics with constant 
Cahen-Gutt momentum $\mu(\nabla)$, we need to know the correct sign convention for
the Donaldson-Futaki invariant. This sign convention is determined by the sign convention of the K-energy
so that it is convex on the space of K\"ahler forms. Note that the K-energy plays the role of the function
$\ell$ in \eqref{function_ell} so that it must be convex. The correct sign convention of the K-energy is
checked as follows.

The smooth path $\omega_{\phi_t}$ is a geodesic if and only if 
$\ddot{\phi} - \|d\phi\|^2_t=0$. We propose a $K$-energy with respect to the Cahen-Gutt moment map $\mu$ by
$$\mathcal{K}(\omega_{\varphi},\omega_0):=\int_{0}^{1} \int_M \dot{\varphi}(\mu(\nabla^t)-\mu_0)\dvol dt,$$
integration is taken along a path joining $\omega_0$ to $\omega_{\varphi}$.
(Note that this convention is opposite to the cscK case where $\mu(\nabla)$ is the scalar curvature.)
Its differential is given by 
$$d\mathcal{K}(\dot{\varphi})=\int_M \dot{\varphi}\mu(\nabla^t)\dvol.$$
The Hessian of $\mathcal{K}$ along a geodesic $\omega_{\phi_t}$ is then computed using the geodesic equation here above and the moment map equation. Take $f_t$ the family of diffeomorphisms generated by the vector field $-\grad^{\phi}(\dot{\phi})$ and $J_t:=f^{-1}_*\circ J \circ f_{t*}$. We have
$$\textrm{Hess}(\mathcal{K})(\dot{\phi},\dot{\phi}) = -(\LC^*\Omega)\left( \Lr_{X_{f_t^*\dot{\phi}}}J_t, J_t\Lr_{X_{f_t^*\dot{\phi}}}J_t  \right),$$
which is non-negative provided the Ricci tensor of $\omega_{\phi_t}$ is non-negative \cite{LLF}.

For $Y$ generating a Hamiltonian isometric $S^1$-action with 
\begin{equation}\label{eq:newconv}
i(Y)\omega=-d v_Y.
\end{equation}
Then $\mathrm{grad}^{(1,0)}v_Y\in \h$, the character $\mathrm{Fut}$ admits the following expression in terms of equivariant cohomology classes:
\begin{eqnarray}\label{Fut_1}
\mathrm{Fut}(\mathrm{grad}^{(1,0)}v_Y) & = & 2m \int_M (v_Y+\omega)^{m-1}c_2\left(\sqrt{-1}\left(R+\nabla Y^{(1,0)}\right)\right) \nonumber\\
& & -  m \int_M(v_Y+\omega)^{m-1}(Ric_{\omega}-\Delta v_Y)^2.
\end{eqnarray}
Adapting the cohomology formula \eqref{intersection no} of Odaka \cite{odaka13} and Wang \cite{wangxw12} to our context, we obtain
\begin{equation}\label{Fut_2}
\frac{1}{(2\pi)^m}\mathrm{Fut}(\mathrm{grad}^{(1,0)}v_Y)= \frac{-2}{m+1}\kappa(M,L).c_1(\mathcal{L})^{m+1}+2m\left( c_2(\mathcal{M})- \frac{1}{2}c_1^2(\mathcal{K}^{-1}_{\mathcal{M}/\bfC\bfP^1})\right).c_1(\mathcal{L})^{m-1}
\end{equation}
where $\kappa(M,L)$ is the average of the Cahen-Gutt momentum
\begin{equation*}
\kappa(M,L):= m(m-1)\frac{\left(c_2-\frac{1}{2}c_1^2\right)(M).c_1(L)^{m-2}}{c_1(L)^m}.
\end{equation*}
Note that the first term in \eqref{Fut_2} did not appear in \eqref{Fut_1} because we assumed the normalization \eqref{eq3} for $v_Y$.

This equivariant cohomology formula suggests that one could define $K$-stability related to the study of K\"ahler metric with constant Cahen-Gutt momentum, at least if one can restricts to smooth test configurations as in \cite{DervanRoss}.

\begin{rem}
The two conventions \eqref{eq2} and \eqref{eq:newconv} agree when $Y=JX$ and $u_X=v_Y$.
\end{rem}

\end{document}